\documentclass[12pt]{amsart}
\usepackage{amssymb}
\usepackage{amsmath}
\usepackage{amsthm}
\usepackage{amsfonts}
\usepackage{bbm}
\usepackage{todonotes} 
\usepackage{tikz}
\usetikzlibrary{arrows}
\usepackage{graphicx}

\usepackage{breakcites}
\usepackage{color}
\usepackage{graphicx} 
\numberwithin{equation}{section}
\setcounter{tocdepth}{2}

\usepackage{breakcites}
\usepackage{color}
\usepackage{graphicx}
\usepackage{epstopdf}
\usepackage{tikz}
\usepackage{hyperref}

\newtheorem{thm}{Theorem}[section]
\newtheorem{lemma}[thm]{Lemma}
\newtheorem{cor}[thm]{Corollary}
\newtheorem{prop}[thm]{Proposition}
\newtheorem{conj}[thm]{Conjecture}

\theoremstyle{definition}
\newtheorem{definition}[thm]{Definition}
\newtheorem{example}[thm]{Example}

\theoremstyle{remark}
\newtheorem{remark}[thm]{Remark}

\renewcommand{\S}{\mathfrak S}

\newcommand\Q{{\mathbb Q}}
\newcommand\Z{{\mathbb{Z}}}

\newcommand\PP{{\mathbb{P}}}
\newcommand\N{{\mathbb{N}}}

\DeclareMathOperator{\rank}{rank}
\DeclareMathOperator{\inv}{inv}
\DeclareMathOperator{\DES}{DES}
\DeclareMathOperator{\sgn}{sgn}
\DeclareMathOperator{\exc}{exc}
\DeclareMathOperator{\asc}{asc}
\DeclareMathOperator{\ASC}{ASC}
\DeclareMathOperator{\FIX}{FIX}
\DeclareMathOperator{\dtr}{det}
\DeclareMathOperator{\des}{des}

\begin{document}

\title[Chromatic quasisymmetric functions of digraphs]{A directed graph generalization of chromatic quasisymmetric functions}
\author[B. Ellzey]{Brittney Ellzey}
\address{Department of Mathematics, University of Miami, Coral Gables, FL 33146}
\email{bellzey@math.miami.edu}
\thanks{$^{1}$Supported in part by NSF Grant 1202755}

\begin{abstract} Stanley defined the chromatic symmetric function of a graph, and Shareshian and Wachs introduced a refinement, namely the chromatic quasisymmetric function of a labeled graph.  In this paper, we define the chromatic quasisymmetric function of a \textit{directed} graph, which agrees with the Shareshian-Wachs definition in the acyclic case.  We give an F-basis expansion for all digraphs in terms of a permutation statistic, which we call G-descents.  We use this expansion to derive a p-positivity formula for all digraphs with symmetric chromatic quasisymmetric functions.  We show that the chromatic quasisymmetric functions of a certain class of digraphs, called circular indifference digraphs, have symmetric coefficients.  We present an e-positivity formula for the chromatic quasisymmetric function of the directed cycle, which is a t-analog of a result of Stanley.   Lastly, we give a generalization of the Shareshian-Wachs e-positivity conjecture to a larger class of digraphs.
\end{abstract}

\date{September 1, 2017}

\vspace*{-.2in} \maketitle

\tableofcontents

\makeatletter
\providecommand\@dotsep{5}
\makeatother

\newpage

\section{Introduction}

Let $G=(V,E)$ be a (simple) graph.  A proper coloring, $\kappa: V \rightarrow \PP$, of $G$ is an assignment of positive integers, which we can think of as colors, to the vertices of $G$ such that adjacent vertices have different colors; in other words, if $\{i,j\} \in E,$ then $\kappa(i) \neq \kappa(j).$  The chromatic polynomial of $G$, denoted $\chi_G(k),$ gives the number of proper colorings of $G$ using $k$ colors.  Stanley \cite{CSF} defined a symmetric function refinement of the chromatic polynomial called the \textit{chromatic symmetric function} of a graph.  For any graph $G,$ let $\kappa(G)$ denote the set of proper colorings of $G.$ If we let $V=\{v_1, v_2,\cdots v_n\},$ then the chromatic symmetric function of $G$ is defined as $$X_G({\bf x}) = \displaystyle \sum_{\kappa \in \kappa(G)} {\bf x}_\kappa, $$ where ${\bf x}_\kappa = x_{\kappa(v_1)}x_{\kappa(v_2)}\cdots x_{\kappa(v_n)}$.  For any formal power series, $f({\bf x}),$ with ${\bf x} = x_1, x_2, \cdots,$ we let $f(1^k)$ denote the specialization of $f({\bf x})$ obtained by setting $x_i = 1$ for $i \leq k$ and $x_j = 0$ for $j > k.$ It is easy to see that the chromatic symmetric function of a graph refines the chromatic polynomial, because $X_G(1^k) = \chi_G(k).$  

The chromatic symmetric function of a graph is symmetric, because permuting the variables simply corresponds to relabeling the colors.  Hence, for any graph G, $X_G({\bf x}) \in \Lambda_\Q,$ where $\Lambda_\Q$ is the $\Q$-algebra of symmetric functions in the variables ${\bf x} = x_1, x_2,\cdots$ with coefficients in $\Q.$  For $n \in \N$, let $\Lambda_\Q^n$ denote the $\Q$-vector space of homogeneous symmetric functions of degree $n$, and let $\lambda \vdash n$ mean that $\lambda$ is an integer partition of $n$.  For any basis, $b = \{b_\lambda \mid \lambda \vdash n\},$ of $\Lambda_\Q^n,$ we say that a symmetric function, $f \in \Lambda_\Q^n$ is \textit{b-positive} if the expansion of the symmetric function in terms of the $b_\lambda$ basis has nonnegative coefficients.  The symmetric function bases we focus on in this paper are the power sum symmetric function basis, $p = \{p_{\lambda} \mid \lambda \vdash n\},$ and the elementary symmetric function basis, $e = \{e_{\lambda} \mid \lambda \vdash n\}.$  Throughout this paper, we use $\omega$ to denote the usual involution on $\Lambda_\Q$ sending the homogeneous symmetric function to the elementary symmetric function. We assume the reader is familiar with basic properties of symmetric and quasisymmetric functions, which can be found in \cite{EC2}. 

One of the most well-known conjectures involving chromatic symmetric functions of graphs is about their $e$-positivity.  Recall that a poset is \textit{(a+b)-free} if it has no induced poset that is the union of a chain with $a$ elements and a chain $b$ elements.  The \textit{incomparability graph} of a poset $P,$ denoted $inc(P),$ is the graph with the elements of $P$ as vertices and edges between incomparable elements of $P$.

\begin{conj} [Stanley-Stembridge \cite{Stem} \cite{CSF}]
Let P be a $(3+1)$-free poset.  Then $X_{inc(P)}({\bf x})$ is $e$-positive.
\end{conj}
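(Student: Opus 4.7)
The Stanley-Stembridge conjecture is long-standing and open, so any proposal is necessarily a plan of attack rather than a complete argument. The strategy I would follow proceeds in two stages, reducing first to a restricted class of posets and then applying the Hessenberg-variety machinery. First, invoke Guay-Paquet's reduction: the chromatic symmetric function of an arbitrary $(3+1)$-free poset can be written as a convex combination, with nonnegative rational coefficients summing to $1$, of chromatic symmetric functions of incomparability graphs of posets that are simultaneously $(3+1)$- and $(2+2)$-free, that is, unit interval orders. Since $e$-positivity is preserved under nonnegative convex combinations, this reduces the conjecture to establishing $e$-positivity of $X_{inc(P)}({\bf x})$ for $P$ a unit interval order.

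Second, for such $G = inc(P)$ with $P$ a natural unit interval order, pass to the finer Shareshian-Wachs chromatic quasisymmetric function $X_G({\bf x},t)$, which specializes to $X_G({\bf x})$ at $t=1$. By the theorems of Brosnan-Chow and Guay-Paquet, $\omega X_G({\bf x},t)$ is the graded Frobenius characteristic of Tymoczko's dot action of $\S_n$ on the cohomology $H^*(\mathcal{H}_P)$ of the associated regular semisimple Hessenberg variety. Since the induced representation $\mathrm{Ind}_{\S_\lambda}^{\S_n}(\mathbf{1})$ has Frobenius characteristic $h_\lambda$, whose image under $\omega$ is $e_\lambda$, it would suffice to exhibit an $\S_n$-equivariant decomposition of $H^*(\mathcal{H}_P)$ into summands each isomorphic to such an induced trivial module. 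Alternatively, one could pursue a purely combinatorial route: build a bijection between proper colorings of $inc(P)$, weighted by a suitable statistic (for instance the $G$-descent statistic developed later in this paper), and pairs $(\lambda, T)$ where $T$ runs over a family whose generating function realizes $h_\lambda$, then apply $\omega$.

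The main obstacle is step two. Gasharov's $P$-tableau model gives a transparent combinatorial proof of Schur-positivity for $X_{inc(P)}$, but no analogous combinatorial model, and no analogous geometric decomposition of $H^*(\mathcal{H}_P)$, is known for the $e$-expansion even in the unit interval order case. Partial positivity results are available for narrow families such as path graphs, certain cycle-like graphs, and abelian Hessenberg varieties, but the uniform construction of the required permutation-representation splitting has resisted every attempt to date, and is the essential content missing from the program sketched above.
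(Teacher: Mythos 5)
There is nothing in the paper to compare your argument against: the statement you were asked to prove is the Stanley--Stembridge conjecture, which the paper records as Conjecture 1.1 precisely because it has no proof. The paper's own contribution on this front is to embed it in a still more general conjecture (Conjecture \ref{e-pos conj}, $e$-positivity and $e$-unimodality for circular indifference digraphs) and to supply evidence such as the $e$-positive expansion of $X_{\overrightarrow{C_n}}({\bf x},t)$ in Theorem \ref{cycle exp}. So your proposal cannot be judged as an alternative route to a known proof; it must stand on its own, and it does not.

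Your first stage is sound: Guay-Paquet's result does reduce the conjecture to unit interval orders, and $e$-positivity is preserved under convex combinations with nonnegative coefficients. The gap is that your second stage is not an argument but a restatement of the open problem in different language. Saying that it ``would suffice to exhibit an $\mathfrak{S}_n$-equivariant decomposition of $H^*(\mathcal{H}_P)$ into summands isomorphic to induced trivial modules'' is exactly equivalent to asserting $h$-positivity of the Frobenius characteristic, i.e.\ to the conjecture itself after applying $\omega$; no mechanism for producing such a decomposition is proposed, and the geometric reformulation does not by itself supply one. The alternative combinatorial route you mention has the same defect: the $F$-expansion via $G$-descents in Theorem \ref{F-basis thm} and the $p$-positivity of Theorem \ref{p thm} are strictly weaker than $e$-positivity, and no construction is given that would upgrade them. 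You correctly identify this missing construction as the essential obstacle, which is honest, but it means the proposal contains no proof of the statement.
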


Chromatic symmetric functions have been extensively studied.  Some of these studies include \cite{GP}, \cite{StanGar}, \cite{Gash2}, \cite{Gash}, \cite{Chow}, \cite{Chow2}, \cite{MMW}, \cite{Hump}, \cite{Wolf}, \cite{NW}, \cite{GebSag}, \cite{OS}.

Shareshian and Wachs \cite{SW}\cite{CQSF} introduced a quasisymmetric refinement of Stanley's chromatic symmetric function called the \textit{chromatic quasisymmetric function} of a graph.  Let $G = ([n],E)$ be a graph, and let $\kappa:[n] \rightarrow \PP$ be a proper coloring of $G$.  We say that an edge $\{i,j\}$ of $G$ is an \textit{ascent} of $\kappa$ if $i < j$ and $\kappa(i)<\kappa(j).$ Let $\asc(\kappa)$ denote the number of ascents of $\kappa.$  Then the chromatic quasisymmetric function of $G$ is given by $$X_G({\bf x},t) = \displaystyle \sum_{\kappa \in \kappa(G)} t^{\asc(\kappa)}{\bf x}_\kappa.$$  We view a graph with vertex set $[n]$ as a labeled graph. Note that the chromatic quasisymmetric function of a labeled graph depends on the labeling chosen and not on the isomorphism class of the graph.  We can easily see that setting $t=1$ gives Stanley's chromatic symmetric function.  

In the Shareshian-Wachs chromatic quasisymmetric function of a graph, we can see that the coefficient of $t^j$ for each $j \in \N$ is a quasisymmetric function, so $X_G({\bf x},t) \in QSym_\Q[t],$ where $QSym_\Q[t]$ is the ring of polynomials in $t$ whose coefficients are in the ring of quasisymmetric functions in the variables ${\bf x} = x_1, x_2, \cdots$ with coefficients in $\Q.$  Note that $QSym_\Q[t]$ is equivalent to $QSym_{\Q[t]},$ i.e. the ring of quasisymmetric functions with coefficients in the ring $\Q[t]$ of polynomials in $t$ with coefficients in $\Q.$  In this paper, we tend to view these chromatic quasisymmetric functions as elements of $QSym_\Q[t],$ but we may sometimes view them as elements of $QSym_{\Q[t]}$ when convenient.   

The coefficients of the chromatic quasisymmetric function of a graph do not necessarily have to be symmetric functions.  The chromatic quasisymmetric function of the graph $1 - 2 - 3$ has symmetric coefficients, but the chromatic quasisymmetric function of the graph $2 - 1 - 3$ does not (see \cite[Example 3.2]{CQSF}).  Shareshian and Wachs showed that if $G$ is a natural unit interval graph, i.e. a unit interval graph with a certain natural labeling, then $X_G ({\bf x},t)$ is symmetric, i.e. $X_G({\bf x},t) \in \Lambda_\Q[t].$  They also established that if $G$ is a natural unit interval graph, $X_G({\bf x},t)$ is a palindromic polynomial in $t$; in other words, $X_G({\bf x},t) = \sum_{i = 0}^{|E|} a_i({\bf x})t^i$ so that $a_i({\bf x}) = a_{|E|-i}({\bf x})$ for all $0 \leq i \leq |E|.$  They presented a refinement of the Stanley-Stembridge conjecture for natural unit interval graphs.

\begin{conj} [Shareshian-Wachs \cite{SW} \cite{CQSF}]
Let $G = ([n], E)$ be a natural unit interval graph.  Then the palindromic polynomial $X_G({\bf x},t)$ is $e$-positive and $e$-unimodal.  

In other words, if $X_G({\bf x},t) = \sum_{j = 0}^{|E|} a_j({\bf x})t^j,$ then $a_j({\bf x})$ is $e$-positive for all $j$ and $a_{j+1}({\bf x}) - a_j({\bf x})$ is $e$-positive for all $j \leq \frac{|E|-1}{2}.$  
\end{conj}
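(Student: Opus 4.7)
The plan is to attack the conjecture by combining the $F$-basis $G$-descent expansion that the paper develops with the palindromicity of $X_G({\bf x},t)$ that Shareshian--Wachs have already established for natural unit interval graphs $G$. First, I would group proper colorings of $G$ according to the descent composition of their associated reading word, producing an explicit expansion
\[
X_G({\bf x},t)=\sum_{\alpha \models n} c_\alpha(t)\, F_\alpha({\bf x}),
\]
and then apply the standard transition from fundamental quasisymmetric functions to elementary symmetric functions, carefully tracking the $t^{\asc}$-weight. The goal would be to organize the resulting signed sum into blocks indexed by partitions $\lambda \vdash n$, and check termwise nonnegativity of the coefficient of each $e_\lambda t^j$.

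Second, to remove the signs introduced by the $F\to e$ change of basis, I would try to build a sign-reversing involution on the underlying colorings that leaves only survivors indexed by a natural $e$-compatible family: candidates include pointed $P$-partition--like objects, acyclic orientations of $G$ with prescribed sink sequences, or configurations reminiscent of $G$-parking functions. For the $e$-unimodality half I would look for a weight-preserving injection from the set of colorings with $\asc(\kappa)=j$ into those with $\asc(\kappa)=j+1$ whenever $j<(|E|-1)/2$, playing the role of a hard-Lefschetz operator on the combinatorial side. The most plausible local move is a swap of color values along a carefully chosen non-edge that increments $\asc$ by exactly one while preserving the $e$-type.

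The hard part, and the reason this remains a conjecture, is precisely the construction of any such involution or bijection in the required generality. Special cases (paths, complete graphs, small Hessenberg functions) succumb to ad hoc manipulations, and the geometric route via Tymoczko's dot action on the cohomology of Hessenberg varieties is known to yield $e$-positivity only in restricted classes. A full proof seems to require a new combinatorial model that simultaneously tracks $\asc$, is compatible with the $G$-descent framework, and manifestly produces monomial products of $e_k$'s with nonnegative coefficients; every existing attempt breaks down at exactly this step, and so would any plan of attack built purely on the machinery available in this paper.
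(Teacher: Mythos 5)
This statement is a \emph{conjecture} (the Shareshian--Wachs $e$-positivity and $e$-unimodality conjecture), and the paper offers no proof of it; it is quoted as an open problem that implies the Stanley--Stembridge conjecture, and the present paper only generalizes it further (Conjecture \ref{e-pos conj}) and verifies special cases such as $X_{\overrightarrow{C_n}}({\bf x},t)$ and the graphs $\overrightarrow{G}^c_{n,r}$ for $r\in\{1,2,n-1,n\}$. So there is no proof in the paper to compare yours against, and your proposal is, by your own account, not a proof either: the two essential steps --- the sign-reversing involution that would cancel the negative terms arising from the $F\to e$ change of basis, and the $\asc$-raising injection that would establish unimodality --- are exactly the missing ingredients, and you correctly state that you do not know how to construct them. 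A genuine obstruction worth naming concretely: the $F$-to-$e$ transition is not triangular in any way that localizes signs to manageable blocks, and the $G$-descent $F$-expansion of Theorem \ref{F-basis thm} (like the Shareshian--Wachs $P$-descent expansion it parallels) yields $p$-positivity of $\omega X_G$ via $\lambda$-unimodal sets, but $p$-positivity of $\omega X_G$ is strictly weaker than $e$-positivity of $X_G$, so no amount of reorganization of that expansion alone can close the gap. Your write-up should therefore be presented as a discussion of strategies and known partial results, not as a proof; as a proof attempt it has an unfixable hole precisely where you say it does.
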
 

The class of unit interval graphs is equivalent to the class of incomparability graphs of $(3+1)$ and $(2+2)$-free posets, so the class of graphs for the Shareshian-Wachs conjecture is smaller than the class of graphs for the Stanley-Stembridge conjecture.  However, Guay-Pacquet \cite{GP} proved that if the Stanley-Stembridge conjecture holds for $(3+1)$ and $(2+2)$-free posets, then it holds for $(3+1)$-free posets.  Hence, the Shareshian-Wachs conjecture implies the Stanley-Stembridge conjecture. 

There is an important connection between chromatic quasisymmetric functions of natural unit interval graphs and Hessenberg varieties, which was conjectured by Shareshian and Wachs and was proven by Brosnan and Chow \cite{BrosChow} and later by Guay-Paquet \cite{GP2}. Clearman, Hyatt, Shelton, and Skandera \cite{Hecke} found an algebraic interpretation of chromatic quasisymmetric functions of natural unit interval graphs in terms of characters of type A Hecke algebras evaluated at Kazhdan-Lusztig basis elements. Recently, Haglund and Wilson \cite{HW} discovered a connection between chromatic quasisymmetric functions and Macdonald polynomials.

We extend the work of Shareshian and Wachs from labeled graphs to (simple)\footnote{We say a directed graph $\overrightarrow{G} = (V, E)$ is simple if there are no loops, i.e. $(v,v) \notin E$ for all $v \in V,$ and for any distinct vertices $u, v \in V$ there can be at most one edge directed from $u$ to $v.$  Note that we do allow two edges between $u$ and $v$, but the edges must have opposite orientations.} \textit{directed} graphs\footnote{see \hyperref[remark]{Remark \ref*{remark}}}.  For notational convenience, we distinguish an undirected graph, $G$, from a directed graph, $\overrightarrow{G},$ with an arrow.  Throughout this paper, we will refer to the \textit{underlying undirected graph} of a digraph $\overrightarrow{G}$ by which we mean the simple undirected graph obtained by removing the orientation from the edges of $\overrightarrow{G}$ and combining any double edges into single edges.  By a  proper coloring of a digraph, we mean a proper coloring of the underlying undirected graph.

\begin{definition} Let $\overrightarrow{G}=(V,E)$ be a digraph.  Given a proper coloring, $\kappa:V \rightarrow \PP$ of $\overrightarrow{G}$, we define an \textit{ascent} of $\kappa$ to be a directed edge $(v_i,v_j) \in E$ with $\kappa(v_i)<\kappa(v_j),$ and we let $\asc(\kappa)$ denote the number of ascents of $\kappa.$  Then the chromatic quasisymmetric function of $\overrightarrow{G}$ is $$X_{\overrightarrow{G}}({\bf x},t) = \displaystyle \sum_{\kappa \in \kappa(\overrightarrow{G})} t^{\asc(\kappa)}{\bf x}_\kappa.$$
\end{definition}

As with the Shareshian-Wachs chromatic quasisymmetric function, setting $t = 1$ gives Stanley's chromatic symmetric function.  We can easily see that for any digraph, $\overrightarrow{G}$, we have $X_{\overrightarrow{G}}({\bf x},t) \in QSym_\Q[t].$  If we take a labeled graph $G = ([n],E)$ and make a digraph $\overrightarrow{G}$ by orienting each edge from the vertex with smaller label to the vertex with larger label, then $X_G({\bf x},t) = X_{\overrightarrow{G}}({\bf x},t).$  Conversely, every acyclic digraph can be obtained in this way (up to isomorphism).  In other words, the Shareshian-Wachs definition of chromatic quasisymmetric function for labeled graphs is equivalent to our definition of chromatic quasisymmetric function in the case of acyclic digraphs.

As in the case of the Shareshian-Wachs chromatic quasisymmetric functions, our chromatic quasisymmetric functions are not always symmetric.  In this paper, we  introduce a class of digraphs, which we call circular indifference digraphs, and we show that if $\overrightarrow{G}$ is a circular indifference digraph, then $X_{\overrightarrow{G}}({\bf x},t) \in \Lambda_\Q[t].$  It turns out that if we make natural unit interval graphs into digraphs by orienting edges from smaller label to larger label, then these digraphs are included in the class of circular indifference digraphs.  In fact, they are exactly the acyclic circular indifference digraphs. Therefore, our symmetry result implies the symmetry result of Shareshian and Wachs.  The simplest non-acyclic circular indifference digraph is the cycle on $n$ vertices with edges directed cyclically, which we denote $\overrightarrow{C_n}.$  

In \cite{CSF}, Stanley shows that for any graph $G$, $\omega X_G({\bf x})$ is $p$-positive.  In \cite{CQSF}, Shareshian and Wachs conjecture a $p
$-expansion formula for $\omega X_G({\bf x},t)$ when $G$ is a natural unit interval graph, which implies $p$-positivity, and in \cite{Ath}, Athanasiadis proves it.  Here we present a $p$-positivity formula for $\omega X_{\overrightarrow{G}}({\bf x},t)$ when $\overrightarrow{G}$ is \textit{any} digraph such that $X_{\overrightarrow{G}}({\bf x},t)$ is symmetric.  

In this paper, we also introduce an $e$-expansion formula for $X_{\overrightarrow{C_n}}({\bf x},t),$ which is a $t$-analog of a formula of Stanley's for the $e$-expansion of $X_{C_n}({\bf x}),$ where $C_n$ is the undirected cycle.  Using our expansion, we show that $X_{\overrightarrow{C_n}}({\bf x},t)$ is $e$-positive and $e$-unimodal. This gives some evidence for the following generalization of the Shareshian-Wachs $e$-positivity conjecture.

\begin{conj}
\label{e-pos conj}
Let $\overrightarrow{G} = (V,E)$ be a circular indifference digraph.  Then the palindromic\footnote{See \hyperref[palindromic]{Proposition \ref*{palindromic}}}  polynomial $X_{\overrightarrow{G}}({\bf x},t)$ is $e$-positive and $e$-unimodal.  In other words, if $X_{\overrightarrow{G}}({\bf x},t) = \displaystyle \sum_{j = 0}^{|E|} a_j({\bf x})t^j,$ then $a_j({\bf x})$ is $e$-positive for all j and $a_{j+1}({\bf x}) - a_j({\bf x})$ is $e$-positive for all $j \leq \frac{|E|-1}{2}.$  
\end{conj}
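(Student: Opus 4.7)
The conjecture contains the Shareshian-Wachs $e$-positivity conjecture as the special case of acyclic circular indifference digraphs (i.e., natural unit interval graphs), so any complete proof will at minimum resolve that still-open problem. My plan is therefore to treat Shareshian-Wachs as an \emph{input} assumption and induct on the ``cyclic complexity'' of $\overrightarrow{G}$, measured by the minimum number of directed edges one must delete to obtain an acyclic digraph. The base case at complexity $0$ is then supplied by Shareshian-Wachs, while the smallest genuinely cyclic example, $\overrightarrow{C_n}$, is handled by the explicit $e$-expansion and unimodality result established earlier in this paper.

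For the inductive step, I would seek a \emph{modular law} in the spirit of Guay-Paquet's triple-reduction identity for the Stanley-Stembridge conjecture, adapted to the $t$-graded directed setting. Concretely, pick a ``back edge'' $e = (u,v) \in E$ with $u > v$ and look for a four-term identity
\[
X_{\overrightarrow{G}_1}({\bf x},t) + t^a\, X_{\overrightarrow{G}_2}({\bf x},t) = X_{\overrightarrow{G}_3}({\bf x},t) + t^b\, X_{\overrightarrow{G}_4}({\bf x},t),
\]
where three of the four digraphs have strictly smaller cyclic complexity (or are acyclic) and all four remain in the class of circular indifference digraphs; solving for $X_{\overrightarrow{G}} = X_{\overrightarrow{G}_1}$ and applying induction would give both $e$-positivity and $e$-unimodality simultaneously, provided the shifts $a,b$ are chosen to respect the palindromic structure. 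The F-basis expansion via G-descents established earlier in the paper, together with the $p$-positivity formula also proved there, should give enough combinatorial flexibility to locate such an identity; one natural target is a recursion on the length of the back edge $e$, using $\overrightarrow{C_n}$ as the terminal case.

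The hardest part will be $e$-\emph{unimodality} and the requirement that reductions stay inside the class of circular indifference digraphs. Even in the acyclic case, unimodality is presently known only through hard Lefschetz for regular semisimple Hessenberg varieties (via Brosnan-Chow and Guay-Paquet), and no analogous geometric model is currently known in the cyclic case. A parallel approach I would pursue is therefore to construct an ``affine'' or ``loop'' Hessenberg variety whose torus-equivariant cohomology carries a Tymoczko-style dot action realizing $X_{\overrightarrow{G}}({\bf x},t)$ as a graded Frobenius character; $e$-positivity would then reduce to writing this character as a nonnegative combination of Young-subgroup permutation modules, and $e$-unimodality would follow from hard Lefschetz. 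Lacking this, a sensible intermediate milestone would be to settle $e$-positivity alone (without unimodality) for circular indifference digraphs with a single back edge, extending the cycle computation and providing the bulk of the combinatorial machinery needed for the inductive attack above.
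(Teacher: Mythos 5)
The statement you have been asked to prove is \hyperref[e-pos conj]{Conjecture~\ref*{e-pos conj}}: it is an open conjecture, not a theorem, and the paper offers no proof to compare yours against. What the paper does provide is evidence in special cases: the explicit $e$-expansion and $e$-unimodality of $X_{\overrightarrow{C_n}}({\bf x},t)$ (\hyperref[cycle exp]{Theorem~\ref*{cycle exp}} and \hyperref[e-uni]{Corollary~\ref*{e-uni}}), closed $e$-positive formulas for $\overrightarrow{G}^c_{n,n}$ and $\overrightarrow{G}^c_{n,n-1}$ (\hyperref[Gnn thm]{Theorem~\ref*{Gnn thm}}), and computer verification of $\overrightarrow{G}^c_{n,r}$ for $n \leq 8$. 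Your proposal, to its credit, is honest that it is a research program rather than a proof, and its general shape (reduce to the acyclic case plus the directed cycle, look for a modular-law recursion) is a reasonable way to think about the problem. But it does not constitute a proof, and the gaps are not minor.

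Concretely: (i) your base case is the Shareshian--Wachs conjecture itself, which is open, so even a complete execution of your plan would yield only a conditional result; (ii) the four-term identity at the heart of the inductive step is never exhibited --- you do not produce the digraphs $\overrightarrow{G}_2,\overrightarrow{G}_3,\overrightarrow{G}_4$, do not verify that they remain circular indifference digraphs of smaller cyclic complexity, and do not show that shifts $a,b$ exist making the identity hold; Guay-Paquet's modular law is an ungraded, undirected statement proved in a Hopf-algebraic framework, and establishing a $t$-graded directed analog is itself a substantial open problem; (iii) the proposed geometric model (an ``affine'' Hessenberg variety with a Tymoczko-style dot action whose graded Frobenius character is $X_{\overrightarrow{G}}({\bf x},t)$) is not constructed, and without it the hard-Lefschetz route to unimodality is unavailable; and (iv) even granting such a module-theoretic interpretation, $e$-positivity requires showing the decomposition into Young-subgroup permutation modules has nonnegative multiplicities, which is precisely the content of the conjecture, not a consequence of having a representation-theoretic model. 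In short, every load-bearing step of the argument is itself conjectural, so there is a genuine gap at each stage; the statement remains open, as the paper says.
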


This conjecture clearly implies the Shareshian-Wachs conjecture, which in turn implies the Stanley-Stembridge conjecture.  This also addresses a question of Stanley from his paper on chromatic symmetric functions {\cite{CSF}}. He defines a class of graphs, which he calls circular indifference graphs, and speculates that they may be $e$-positive.  These circular indifference graphs are the underlying undirected graphs of our circular indifference digraphs, so setting $t=1$ in \hyperref[e-pos conj]{Conjecture \ref*{e-pos conj}} would give a positive answer to Stanley's question.

This paper is organized as follows.  In \hyperref[Basic Results]{Section \ref*{Basic Results}}, we cover some basic results on chromatic quasisymmetric functions of directed graphs and give a few examples.  In \hyperref[F-basis]{Section \ref*{F-basis}}, we give an expansion of the chromatic quasisymmetric function of all digraphs in terms of Gessel's fundamental quasisymmetric functions.  

In \hyperref[p-basis]{Section \ref*{p-basis}}, we present our $p$-expansion of $\omega X_{\overrightarrow{G}}({\bf x},t)$ for any digraph $\overrightarrow{G}$ such that $X_{\overrightarrow{G}}({\bf x},t)$ is symmetric.  We also give a factorization for the coefficients of $\omega X_{\overrightarrow{C_n}}({\bf x},t)$ in the $p$-basis that involves Eulerian polynomials.  In \hyperref[Symmetry]{Section \ref*{Symmetry}}, we introduce the class of circular indifference digraphs and show that their chromatic quasisymmetric functions are symmetric.  


In \hyperref[e-basis]{Section \ref*{e-basis}}, we discuss some results on the expansion of chromatic quasisymmetric functions of circular indifference digraphs in term of the elementary symmetric basis and provide some support for Conjecture \ref{e-pos conj}. We observe that a generalization of the Shareshian-Wachs refinement of Stanley's result giving a relationship between acyclic orientations of a graph and coefficients in the elementary symmetric function expansion holds in the setting of digraphs.  We also present the formula for the $e$-expansion of $X_{\overrightarrow{C_n}}({\bf x},t).$  We then provide a combinatorial interpretation of the coefficients of the expansion in the $e$-basis of $X_{\overrightarrow{C_n}}({\bf x},t)$ and $X_{\overrightarrow{P_n}}({\bf x},t),$ where $\overrightarrow{P_n}$ is the path on $n$ vertices with all edges oriented the same direction.  

In \hyperref[Graphs]{Appendix \ref*{Graphs}}, we discuss some properties of circular indifference digraphs.  We discuss how these graphs are related to other well-known graphs.  We also explain how this class of graphs relates to the class of natural unit interval graphs studied by Shareshian and Wachs.

\begin{remark} \label{remark}
Most of the results of this paper were presented in the FPSAC extended abstract \cite{FPSAC}.  Here we present their proofs and some additional results.

The idea of extending chromatic quasisymmetric functions to directed graphs was a suggestion made by Richard Stanley to the author after attending a talk on her work on the chromatic quasisymmetric function of the labeled cycle \cite{me2}.

Subsequent to our work, Alexandersson and Panova \cite{AP} independently obtained the symmetry result of \hyperref[Symmetry]{Section \ref*{Symmetry}} and the $e$-expansion results of \hyperref[e-basis]{Section \ref*{e-basis}}. However, their proof of \hyperref[cycle exp]{Theorem~\ref*{cycle exp}}, giving the $e$-expansion of $X_{\overrightarrow{C_n}}({\bf x},t)$, is very different from ours.

A directed graph refinement of Stanley's Tutte symmetric function \cite{StanGar} was recently studied by Awan and Bernardi \cite{AB} and reduces to our definition of the chromatic quasisymmetric function for directed graphs; however, the results of this paper are disjoint from the results of \cite{AB}.

\end{remark}

\section{Basic results}\label{Basic Results}
Let $QSym_\Q$ denote the $\Q$-algebra of quasisymmetric functions in the variables ${\bf x} = x_1, x_2,...$ with coefficients in $\Q$.  See \cite[Chapter 7]{EC2} for a reference.  Let $QSym_{\Q}^n$ denote the $\Q$-submodule of homogeneous quasisymmetric functions in $QSym_{\Q}$ of degree $n$.  Both of the following propositions follow easily from the definition of the chromatic quasisymmetric function of a digraph.

\begin{prop}
For any digraph $\overrightarrow{G} = (V,E)$ with $|V| = n$, we have $X_{\overrightarrow{G}}({\bf x},t) \in QSym_\Q^n[t].$
\end{prop}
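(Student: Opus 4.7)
The plan is to verify the two ingredients of the membership $X_{\overrightarrow{G}}({\bf x},t) \in QSym_\Q^n[t]$: (i) homogeneity of degree $n$ in the ${\bf x}$ variables, and (ii) quasisymmetry of each coefficient $a_j({\bf x})$ of $t^j$.

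Homogeneity is immediate from the definition: every monomial appearing in the sum is of the form ${\bf x}_\kappa = x_{\kappa(v_1)} \cdots x_{\kappa(v_n)}$, which has total degree $n$ since $|V|=n$. Thus $X_{\overrightarrow{G}}({\bf x},t)$ lies in $\Q^n[{\bf x}][t]$, and it remains only to establish quasisymmetry.

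For quasisymmetry, I would fix a composition $\alpha = (\alpha_1, \dots, \alpha_k)$ of $n$ and two strictly increasing sequences of positive integers $i_1 < \cdots < i_k$ and $j_1 < \cdots < j_k$, and then exhibit a bijection between the set of proper colorings $\kappa$ contributing the monomial $x_{i_1}^{\alpha_1} \cdots x_{i_k}^{\alpha_k}$ (with a prescribed ascent count) and those contributing $x_{j_1}^{\alpha_1} \cdots x_{j_k}^{\alpha_k}$ (with the same ascent count). The bijection is the color-relabeling map $\phi$ that sends a coloring using color $i_s$ to the one using color $j_s$ in exactly the same places, for each $s = 1, \dots, k$. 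Two key points need to be checked: first, $\phi(\kappa)$ is still a proper coloring, which holds because $\phi$ is injective on colors and so two vertices receive the same color under $\kappa$ if and only if they receive the same color under $\phi(\kappa)$; second, $\asc(\phi(\kappa)) = \asc(\kappa)$, which holds because both $i_1 < \cdots < i_k$ and $j_1 < \cdots < j_k$ are strictly increasing, so $\kappa(u) < \kappa(v)$ if and only if $\phi(\kappa)(u) < \phi(\kappa)(v)$ for every directed edge $(u,v)$. Inverting the roles of the $i$'s and $j$'s gives the inverse bijection.

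There is no real obstacle here — the statement is essentially a direct unpacking of the definition, and the argument is the same order-preserving color-relabeling argument that Shareshian and Wachs use in the labeled-graph setting. The only thing one must be careful about is that the definition of ascent for digraphs compares colors along a \emph{directed} edge $(v_i,v_j)$ rather than along an edge of a labeled graph, but since the map $\phi$ preserves both the coloring pattern and the order relations between the colors used, the ascent statistic is transported faithfully. Summing over all such $\kappa$ and tracking the power of $t$ then shows that each coefficient $a_j({\bf x})$ is quasisymmetric, completing the proof.
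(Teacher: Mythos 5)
Your proof is correct: the paper offers no explicit argument for this proposition, simply asserting that it ``follows easily from the definition,'' and your order-preserving color-relabeling bijection together with the degree count is exactly the routine verification being alluded to. Nothing further is needed.
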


\begin{prop}\label{disj}
Let $\overrightarrow{G}$ and $\overrightarrow{H}$ be digraphs on disjoint vertex sets and let $\overrightarrow{G+H}$ denote the graph formed by the disjoint union of $\overrightarrow{G}$ and $\overrightarrow{H}$.  Then $X_{\overrightarrow{G+H}}({\bf x},t) = X_{\overrightarrow{G}}({\bf x},t)X_{\overrightarrow{H}}({\bf x},t).$
\end{prop}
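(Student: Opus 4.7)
The plan is to exhibit a bijection between proper colorings of $\overrightarrow{G+H}$ and pairs of proper colorings of $\overrightarrow{G}$ and $\overrightarrow{H}$, and to verify that both the monomial weight and the ascent statistic split multiplicatively and additively, respectively, under this bijection.

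First I would observe that since $V(\overrightarrow{G})$ and $V(\overrightarrow{H})$ are disjoint and no edges of $\overrightarrow{G+H}$ run between them, the underlying undirected graph of $\overrightarrow{G+H}$ is the disjoint union of the underlying graphs of $\overrightarrow{G}$ and $\overrightarrow{H}$. Consequently, a map $\kappa: V(\overrightarrow{G}) \cup V(\overrightarrow{H}) \to \PP$ is a proper coloring of $\overrightarrow{G+H}$ if and only if its restrictions $\kappa_G := \kappa\!\mid_{V(\overrightarrow{G})}$ and $\kappa_H := \kappa\!\mid_{V(\overrightarrow{H})}$ are proper colorings of $\overrightarrow{G}$ and $\overrightarrow{H}$ respectively. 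This yields a bijection $\kappa(\overrightarrow{G+H}) \leftrightarrow \kappa(\overrightarrow{G}) \times \kappa(\overrightarrow{H})$.

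Next I would note that the monomial weight factors: by definition ${\bf x}_\kappa = \prod_{v \in V(\overrightarrow{G})} x_{\kappa(v)} \cdot \prod_{v \in V(\overrightarrow{H})} x_{\kappa(v)} = {\bf x}_{\kappa_G} {\bf x}_{\kappa_H}$. For the ascent statistic, the edge set $E(\overrightarrow{G+H})$ is the disjoint union $E(\overrightarrow{G}) \sqcup E(\overrightarrow{H})$, and an edge $(v_i, v_j)$ of $\overrightarrow{G}$ (resp.\ $\overrightarrow{H}$) is an ascent of $\kappa$ if and only if it is an ascent of $\kappa_G$ (resp.\ $\kappa_H$), since the condition $\kappa(v_i) < \kappa(v_j)$ only depends on the values of the restrictions. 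Hence $\asc(\kappa) = \asc(\kappa_G) + \asc(\kappa_H)$, so $t^{\asc(\kappa)} = t^{\asc(\kappa_G)} t^{\asc(\kappa_H)}$.

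Putting these together, the sum defining $X_{\overrightarrow{G+H}}({\bf x},t)$ factors:
\[
\sum_{\kappa \in \kappa(\overrightarrow{G+H})} t^{\asc(\kappa)} {\bf x}_\kappa = \sum_{\kappa_G \in \kappa(\overrightarrow{G})} \sum_{\kappa_H \in \kappa(\overrightarrow{H})} t^{\asc(\kappa_G)} {\bf x}_{\kappa_G} \cdot t^{\asc(\kappa_H)} {\bf x}_{\kappa_H} = X_{\overrightarrow{G}}({\bf x},t) X_{\overrightarrow{H}}({\bf x},t).
\]
There is no real obstacle here; the only point requiring care is the observation that disjointness of the vertex sets (combined with simplicity) forces $E(\overrightarrow{G+H}) = E(\overrightarrow{G}) \sqcup E(\overrightarrow{H})$ with no cross-edges, which is what lets both the properness condition and the ascent count decouple between the two pieces.
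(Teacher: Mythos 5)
Your proof is correct and is precisely the routine argument the paper has in mind when it says the proposition ``follows easily from the definition'': the bijection between colorings of the disjoint union and pairs of colorings, together with the factorization of the monomial weight and the additivity of the ascent statistic across the two components. Nothing to add.
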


Now let us look at a few examples of digraphs and their corresponding chromatic quasisymmetric functions.

\begin{example}
For any digraph, $\overrightarrow{G},$ on $n$ vertices whose underlying undirected graph is the complete graph, $K_n$, we have that $X_{\overrightarrow{G}}({\bf x},t) = p(t) e_n,$ where $p(t) = \sum_{\kappa} t^{\asc_{\overrightarrow{G}}(\kappa)}$ and $\kappa$ varies over all proper colorings of $\overrightarrow{G}$ using only the colors in $[n].$  From this we can see that $X_{\overrightarrow{G}}({\bf x}, t)$ is $e$-positive.  Specifically if $\overrightarrow{G}$ is acyclic, then $p(t) = [n]_t !$, where $[n]_t = 1+t+ \cdots + t^{n-1}$ and $[n]_t! = [n]_t [n-1]_t \cdots [1]_t$. (See \cite[Example 2.4]{CQSF}.)  If $\overrightarrow{G}$ contains all pairs of double edges, $p(t) = n! t^{(\substack{{n} \\ {2}})}.$ (See \hyperref[Gnn thm]{Theorem \ref*{Gnn thm}}.)
\end{example}

\begin{example}\label{dir path ex}
Let $\overrightarrow{P_n} = (V, E)$ denote the \textit{directed path} on $n$ vertices with vertex set $V = \{v_1, v_2, \cdots, v_n\}$ and edge set $E = \{(v_i, v_{i+1})\mid 1 \leq i <n\}$.  Shareshian and Wachs \cite[Theorem 7.2]{Eul} showed that $$\displaystyle \sum_{n \geq 0} X_{\overrightarrow{P_n}}({\bf x},t)z^n = \frac{\displaystyle \sum_{k \geq 0}e_k({\bf x}) z^k}{1-t \displaystyle \sum_{k \geq 2} [k-1]_t e_k({\bf x}) z^k},$$ which refines a formula of Stanley's for the chromatic symmetric function of the undirected path, $P_n$ \cite[Proposition 5.3]{CSF}.  From this formula, we can see that $X_{\overrightarrow{P_n}}({\bf x},t)$ is symmetric, $e$-positive, and $e$-unimodal \cite[Corollary C.5]{CQSF}.
\end{example}

\begin{example}\label{dir cyc ex}
Let us define the \textit{directed cycle} on $n$ vertices, denoted $\overrightarrow{C_n} = (V, E),$ as the digraph with vertex set $V = \{v_1, v_2, \cdots, v_n\}$ and edge set $E = \{(v_i, v_{i+1}) \mid 1 \leq i <n\} \cup \{(v_n, v_1)\}.$  In \hyperref[cycle exp]{Theorem \ref*{cycle exp}}, we show that \begin{equation} \label{Cn form ref} \displaystyle \sum_{n \geq 2} X_{\overrightarrow{C_n}}({\bf x},t)z^n = \frac{t \displaystyle \sum_{k \geq 2}k[k-1]_t e_k({\bf x}) z^k}{1-t \displaystyle \sum_{k \geq 2} [k-1]_t e_k({\bf x}) z^k},
\end{equation} and hence $X_{\overrightarrow{C_n}}({\bf x},t)$ is symmetric.  In fact, in \hyperref[e-uni]{Corollary \ref*{e-uni}}, we show that the coefficients are $e$-positive and $e$-unimodal. \hyperref[Cn form ref]{Equation (\ref*{Cn form ref})} is a $t$-analog of a formula of Stanley's for the chromatic symmetric function of the undirected cycle, $C_n$ \cite[Proposition 5.4]{CSF}.
\end{example}

Unfortunately, not every orientation of a given graph has a symmetric chromatic quasisymmetric function.  The smallest example of this is the path on 3 vertices.  The orientations of the path are given by 

\begin{center}
\includegraphics[scale=0.5]{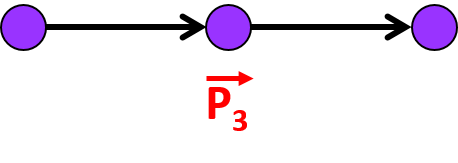}

\includegraphics[scale=0.5]{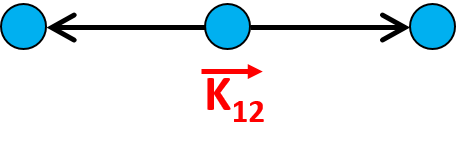} \hspace{0.5 in} \includegraphics[scale=0.5]{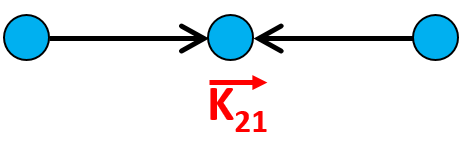}
\end{center}
There is only one orientation ($\overrightarrow{P_3}$) with a symmetric chromatic quasisymmetric function. The other two orientations ($\overrightarrow{K_{12}}$ and $\overrightarrow{K_{21}}$) do not have symmetric chromatic quasisymmetric functions. See \hyperref[AB ex]{Example \ref*{AB ex}}.

On the other hand, there are also graphs that do not admit any orientation whose associated chromatic quasisymmetric function is symmetric.  The graph $K_{31}$ is given by 
\begin{center}
\includegraphics[scale=0.3]{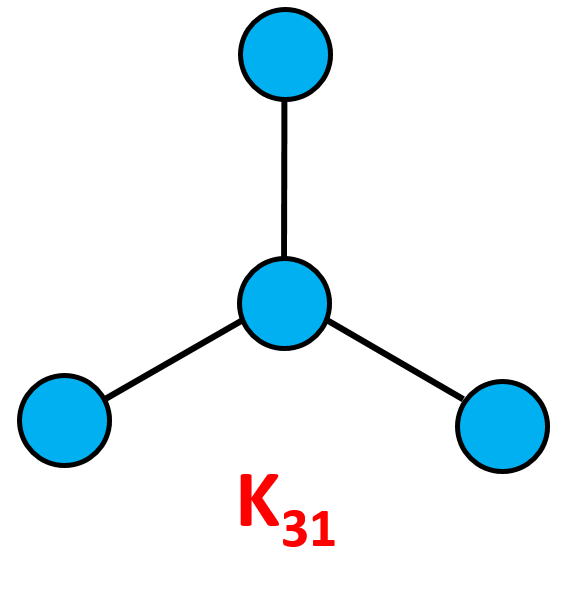}
\end{center}
None of the orientations of $K_{31}$ have chromatic quasisymmetric functions that are symmetric.

Let $\rho : QSym_\Q \rightarrow QSym_\Q$ be the involution defined on the monomial quasisymmetric function basis, $M_{\alpha}$, by $\rho(M_{\alpha}) = M_{\alpha^{rev}}$ for each composition $\alpha.$ Note that every symmetric function is fixed by $\rho.$  We can extend $\rho$ to $QSym_\Z[t]$ by linearity.  Then the next propositions follow easily from \cite[Proposition 2.6, Corollary 2.7, Corollary 2.8]{CQSF}.  Note in \cite{CQSF}, these statements are proven for labeled graphs; however, the same proof works for digraphs.

\begin{prop}
Let $\overrightarrow{G}$ be a digraph on $n$ vertices.  Then $$\rho(X_{\overrightarrow{G}}({\bf x},t)) = \displaystyle \sum_{\kappa \in \kappa(\overrightarrow{G})} t^{\des(\kappa)} {\bf x}_{\kappa},$$ where $\des(\kappa)$ is the number of directed edges $(u,v)$ of $\overrightarrow{G}$ such that $\kappa(u)>\kappa(v)$.

Hence if $X_{\overrightarrow{G}}({\bf x},t)$ is symmetric, then $$X_{\overrightarrow{G}}({\bf x},t) = \displaystyle \sum_{\kappa \in \kappa(\overrightarrow{G})} t^{\des(\kappa)} {\bf x}_{\kappa}.$$
\end{prop}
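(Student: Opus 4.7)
The plan is to expand $X_{\overrightarrow{G}}({\bf x},t)$ in the monomial quasisymmetric function basis, apply $\rho$ termwise, and then reinterpret the result via a color-reversing bijection on proper colorings. Concretely, I would group the sum $\sum_\kappa t^{\asc(\kappa)} {\bf x}_\kappa$ by the ranking of colors used: a proper coloring $\kappa$ of $\overrightarrow{G}$ whose distinct color values are $c_1 < c_2 < \cdots < c_k$ corresponds to the pair $(\{c_1 < \cdots < c_k\}, \pi)$, where $\pi : V \twoheadrightarrow [k]$ sends each vertex to the rank of its color. Setting $m_i = |\pi^{-1}(i)|$ and $\alpha(\pi) = (m_1, \ldots, m_k)$, we have ${\bf x}_\kappa = x_{c_1}^{m_1} \cdots x_{c_k}^{m_k}$. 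Since $\asc(\kappa)$ depends only on $\pi$, summing over all valid increasing sequences $c_1 < \cdots < c_k$ yields the monomial quasisymmetric function $M_{\alpha(\pi)}$, and hence
\[
X_{\overrightarrow{G}}({\bf x},t) = \sum_{\pi} t^{\asc(\pi)} M_{\alpha(\pi)},
\]
where $\pi$ ranges over all surjections $V \twoheadrightarrow [k]$ (for all $k \leq n$) that are proper colorings of the underlying undirected graph, and $\asc(\pi)$ counts directed edges $(u,v)$ with $\pi(u) < \pi(v)$.

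Next, applying $\rho$ turns each $M_{\alpha(\pi)}$ into $M_{\alpha(\pi)^{\mathrm{rev}}}$. Define an involution on rank-$k$ proper surjections by $\widetilde{\pi}(v) = k+1 - \pi(v)$. This is manifestly proper (it is a bijective relabeling of colors), it satisfies $\alpha(\widetilde{\pi}) = \alpha(\pi)^{\mathrm{rev}}$, and on any directed edge $(u,v)$ the inequality $\pi(u) < \pi(v)$ becomes $\widetilde{\pi}(u) > \widetilde{\pi}(v)$, so $\asc(\pi) = \des(\widetilde{\pi})$. Reindexing the sum by $\widetilde{\pi}$ and reinstating the choice of colors gives
\[
\rho(X_{\overrightarrow{G}}({\bf x},t)) = \sum_{\pi} t^{\des(\widetilde{\pi})} M_{\alpha(\widetilde{\pi})} = \sum_{\kappa \in \kappa(\overrightarrow{G})} t^{\des(\kappa)}{\bf x}_\kappa,
\]
establishing the first identity.

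The second assertion follows at once: every symmetric function is fixed by $\rho$ (each monomial symmetric function $m_\lambda$ is the sum of $M_\alpha$ over all compositions $\alpha$ rearranging to $\lambda$, a set stabilized by the reversal $\alpha \mapsto \alpha^{\mathrm{rev}}$), so if $X_{\overrightarrow{G}}({\bf x},t) \in \Lambda_\Q[t]$ then $X_{\overrightarrow{G}}({\bf x},t) = \rho(X_{\overrightarrow{G}}({\bf x},t))$, and the desired formula drops out. There is no genuine obstacle in this proof; the only step requiring care is confirming that $\pi \mapsto \widetilde{\pi}$ is a well-defined involution on proper surjections that simultaneously reverses the composition and swaps ascents with descents, and this rests on the observation that whether a directed edge is an ascent depends only on the relative order of $\pi$ on its endpoints.
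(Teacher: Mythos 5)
Your proof is correct and follows essentially the same route as the paper, which does not write out an argument but instead cites the proof of \cite[Proposition 2.6]{CQSF}: expand $X_{\overrightarrow{G}}({\bf x},t)$ in the monomial quasisymmetric basis indexed by proper surjective colorings, apply $\rho$, and use the color-complementation bijection $\pi \mapsto k+1-\pi$ to exchange ascents and descents. Nothing further is needed.
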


\begin{prop} \label{palindromic}
If $X_{\overrightarrow{G}}({\bf x},t)$ is symmetric, then $X_{\overrightarrow{G}}({\bf x},t)$ is palindromic in $t$ with center of symmetry $\frac{\mid E(\overrightarrow{G}) \mid}{2}.$
\end{prop}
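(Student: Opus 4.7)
The plan is to combine the previous proposition with the observation that, for a proper coloring, every directed edge must be either an ascent or a descent. Start from the hypothesis that $X_{\overrightarrow{G}}({\bf x},t)$ is symmetric, which by the preceding proposition gives the identity
\[
X_{\overrightarrow{G}}({\bf x},t) = \sum_{\kappa \in \kappa(\overrightarrow{G})} t^{\des(\kappa)}{\bf x}_{\kappa}.
\]
(This is where symmetry is used: $\rho$ fixes $X_{\overrightarrow{G}}$, so the ascent-weighted sum equals the descent-weighted sum.)

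Next, I would note that for any proper coloring $\kappa$ and any directed edge $(u,v)$, exactly one of $\kappa(u)<\kappa(v)$ or $\kappa(u)>\kappa(v)$ holds, since $\kappa(u)\neq\kappa(v)$. Therefore $\asc(\kappa)+\des(\kappa)=|E(\overrightarrow{G})|$ for every $\kappa \in \kappa(\overrightarrow{G})$. Substituting $\des(\kappa)=|E(\overrightarrow{G})|-\asc(\kappa)$ into the displayed identity above yields
\[
X_{\overrightarrow{G}}({\bf x},t) = t^{|E(\overrightarrow{G})|}\sum_{\kappa \in \kappa(\overrightarrow{G})} t^{-\asc(\kappa)}{\bf x}_{\kappa} = t^{|E(\overrightarrow{G})|}\, X_{\overrightarrow{G}}({\bf x},t^{-1}).
\]

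Finally, writing $X_{\overrightarrow{G}}({\bf x},t)=\sum_{j=0}^{|E(\overrightarrow{G})|} a_j({\bf x})t^j$ and comparing the coefficient of $t^j$ on both sides of the functional equation immediately gives $a_j({\bf x})=a_{|E(\overrightarrow{G})|-j}({\bf x})$, which is precisely palindromicity with center of symmetry $\tfrac{|E(\overrightarrow{G})|}{2}$. There is no real obstacle here; the only conceptual step is recognizing that symmetry of $X_{\overrightarrow{G}}({\bf x},t)$ lets us trade ascents for descents, after which the $\asc+\des=|E|$ identity does all the work.
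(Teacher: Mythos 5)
Your argument is correct and is essentially the paper's own: the paper derives palindromicity by citing the Shareshian--Wachs proof (via the involution $\rho$ and the preceding proposition), whose content is exactly the identity $\asc(\kappa)+\des(\kappa)=|E(\overrightarrow{G})|$ combined with $X_{\overrightarrow{G}}({\bf x},t)=\sum_{\kappa}t^{\des(\kappa)}{\bf x}_{\kappa}$ in the symmetric case. Nothing further is needed.
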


\section{Expansion in Gessel's fundamental quasisymmetric functions} \label{F-basis}

Shareshian and Wachs give an expansion of $\omega X_G({\bf x},t)$ in terms of Gessel's fundamental quasisymmetric basis \cite[Theorem 3.1]{CQSF} when $G$ is the incomparabilty graph of a poset, which shows that $\omega X_{G}({\bf x},t)$ is $F$-positive.  In this section, we present an $F$-basis expansion of $\omega X_{\overrightarrow{G}}({\bf x},t)$ for \textit{all} digraphs, which shows that $\omega X_{\overrightarrow{G}}({\bf x},t)$ is $F$-positive for \textit{all} digraphs.  In general our formula does not reduce to the formula of Shareshian and Wachs, so this gives another combinatorial description of the coefficients in the $F$-expansion for incomparability graphs of posets.

We may assume without loss of generality that the vertex set of a digraph $\overrightarrow{G}$ is $[n].$  The labeling chosen does not affect the chromatic quasisymmetric function of $\overrightarrow{G},$ as it would for the chromatic quasisymmetric function of a labeled graph defined by Shareshian and Wachs.

Let $\overrightarrow{G} = ([n],E)$ be a digraph and let $\sigma \in \mathfrak{S}_n.$  Define a $\overrightarrow{G}$\textit{-inversion} of $\sigma$ as a directed edge $(u,v)$ of $\overrightarrow{G}$ such that $\sigma^{-1}(u) > \sigma^{-1}(v),$ i.e. $v$ precedes $u$ in $\sigma.$ Notice that a $\overrightarrow{G}$-inversion does not need to be a usual inversion; if $v < u,$ then it will not be. Let $\inv_{\overrightarrow{G}}(\sigma)$ be the number of $\overrightarrow{G}$-inversions of $\sigma.$

Now let $G = ([n],E)$ be an undirected graph and let $\sigma = \sigma_1 \sigma_2 \cdots \sigma_n \in \mathfrak{S}_n$.  For each $x \in [n],$ define the $(G, \sigma)$\textit{-rank} of $x$, denoted $\rank_{(G,\sigma)}(x),$ as the length of the longest subword $\sigma_{i_1}\sigma_{i_2}\cdots\sigma_{i_k}$ of $\sigma$ such that $\sigma_{i_k} = x$ and for each $1 \leq j < k$, $\{\sigma_{i_j},\sigma_{i_{j+1}}\} \in E$.  We say $\sigma$ has a \textit{$G$-descent} at $i$ with $1 \leq i < n$ if either of the following conditions holds:

\begin{itemize}
 \item $\rank_{(G, \sigma)}(\sigma_i) > \rank_{(G, \sigma)}(\sigma_{i+1})$
 \item $\rank_{(G, \sigma)}(\sigma_i) = \rank_{(G, \sigma)}(\sigma_{i+1})$ and $\sigma_i > \sigma_{i+1}.$  
\end{itemize}
Let $\DES_{G}(\sigma)$ be the set of $G$-descents of $\sigma.$

For example, let $G = C_8$ be the cycle on 8 vertices labeled with $[8]$ in cyclic order.  Let $\sigma = 25413786 \in \mathfrak{S}_8$.  By attaching the $(G, \sigma)$-rank of each letter as a superscript, we get $2^1 5^1 4^2 1^2 3^3 7^1 8^3 6^2.$  We can see from this that $\DES_G(\sigma) = \{3, 5, 7\}.$  

\begin{thm} \label{F-basis thm}
Let $\overrightarrow{G} = ([n],E)$ be a digraph.  Then \begin{equation}\label{F-basis eqn} \omega X_{\overrightarrow{G}}({\bf x},t) = \displaystyle \sum_{\sigma \in \mathfrak{S}_n} F_{n, \DES_{G}(\sigma)}({\bf x})t^{\inv_{\overrightarrow{G}}(\sigma)},
\end{equation} where $F_{n,S}({\bf x})$ is Gessel's fundamental quasisymmetric function and $G$ is the underlying undirected graph of $\overrightarrow{G}.$ 
\end{thm}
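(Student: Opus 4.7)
My plan is to establish the theorem by an explicit bijection between proper colorings $\kappa$ of $\overrightarrow{G}$ and pairs $(\sigma, c)$ consisting of a permutation $\sigma \in \mathfrak{S}_n$ and a weakly increasing sequence of positive integers $c = (c_1 \leq c_2 \leq \cdots \leq c_n)$ in which $c_j < c_{j+1}$ is forced at each index $j \in [n-1] \setminus \DES_G(\sigma)$. For fixed $\sigma$, the sum of $x_{c_1}\cdots x_{c_n}$ over all such $c$ is exactly $F_{n,[n-1]\setminus\DES_G(\sigma)}(\mathbf{x})$ by Gessel's definition; combined with the fact that under the bijection $\asc(\kappa)$ is a function of $\sigma$ alone (equal to the $\overrightarrow{G}$-inversion statistic $\inv_{\overrightarrow{G}}(\sigma)$), this gives an $F$-expansion of $X_{\overrightarrow{G}}(\mathbf{x},t)$, and applying the involution $\omega$ on $QSym_\Q$ (which sends $F_{n,S}$ to $F_{n,[n-1]\setminus S}$) produces the theorem.

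The forward map assigns to each proper $\kappa$ the permutation $\sigma(\kappa)$ obtained by listing the vertices in nondecreasing $\kappa$-color, with ties inside each color class broken by decreasing $(G,\sigma)$-rank and, on remaining rank ties, by decreasing vertex label. The apparent circularity (the rank depends on $\sigma$, which we are still building) is resolved by the key observation that $\rank_{(G,\sigma)}(x)$ depends only on $\kappa$ and $G$: any $G$-path in $\sigma$ ending at $x$ has consecutive vertices that are $G$-adjacent and hence differently colored, and since $\sigma$ is in nondecreasing color order, these colors are actually strictly increasing along the path, so the path uses only vertices of color less than $\kappa(x)$, whose positions in $\sigma$ are already determined by the color ordering alone. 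With $\rank$ in hand, setting $c_j := \kappa(\sigma_j)$ yields a weakly increasing sequence, and any coincidence $c_j = c_{j+1}$ places $\sigma_j$ and $\sigma_{j+1}$ into the same color class, where the tie-breaker forces $(\rank(\sigma_j), \sigma_j) >_{\text{lex}} (\rank(\sigma_{j+1}), \sigma_{j+1})$, which is exactly the definition of $j \in \DES_G(\sigma)$. Hence $c_j < c_{j+1}$ at every $j \notin \DES_G(\sigma)$, as required.

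The main obstacle is the inverse map: given any $\sigma$ and any $c$ as above, define $\kappa(\sigma_j) := c_j$ and verify both that $\kappa$ is a proper coloring and that $\sigma(\kappa)$ recovers $\sigma$. The heart is a covering lemma: for every edge $\{\sigma_j, \sigma_k\} \in E(G)$ with $j < k$, extending any rank-realizing path ending at $\sigma_j$ by $\sigma_k$ gives $\rank_{(G,\sigma)}(\sigma_k) \geq \rank_{(G,\sigma)}(\sigma_j) + 1$, so the rank must strictly increase at some $\ell \in [j, k-1]$; this $\ell$ is not in $\DES_G(\sigma)$, and the strict-increase hypothesis on $c$ then forces $c_\ell < c_{\ell+1}$, so $c_j < c_k$ and $\kappa(\sigma_j) \neq \kappa(\sigma_k)$. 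Recovering $\sigma$ from $\kappa$ via the tie-breaker is routine since every consecutive same-color pair in $\sigma$ must sit at an index of $\DES_G(\sigma)$, and the block order is thereby pinned down lexicographically by $(\rank, \text{label})$. Finally, for any directed edge $(u,v) \in E(\overrightarrow{G})$ the properness of $\kappa$ together with the weak monotonicity of $c$ turns $\kappa(u) < \kappa(v)$ into a condition on the relative positions of $u$ and $v$ in $\sigma$, identifying $\asc(\kappa)$ with $\inv_{\overrightarrow{G}}(\sigma)$ and finishing the proof.
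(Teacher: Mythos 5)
Your overall strategy --- a direct standardization bijection $\kappa\mapsto(\sigma,c)$ rather than the paper's route through acyclic orientations and $P$-partitions --- is viable, but as written the argument does not yield the stated formula, for two compounding reasons. First, with $\sigma$ listing the vertices in \emph{nondecreasing} color order, a directed edge $(u,v)$ is an ascent of $\kappa$ exactly when $\kappa(u)<\kappa(v)$, i.e.\ exactly when $u$ \emph{precedes} $v$ in $\sigma$; but $\inv_{\overrightarrow{G}}(\sigma)$ counts the edges $(u,v)$ for which $v$ precedes $u$. So your map identifies $\asc(\kappa)$ with the complementary statistic (the number of edges $(u,v)$ with $u$ preceding $v$, which is $\inv_{\overrightarrow{G}}(\sigma^{\mathrm{rev}})$), not with $\inv_{\overrightarrow{G}}(\sigma)$. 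Second, the involution $\omega$ in the theorem is the standard automorphism of $QSym_\Q$, which acts on the fundamental basis by $\omega F_{n,S}=F_{n,[n-1]\setminus(n-S)}$ with $n-S=\{n-i\mid i\in S\}$; the map $F_{n,S}\mapsto F_{n,[n-1]\setminus S}$ is a different involution that agrees with $\omega$ on $\Lambda_\Q$ but not on $QSym_\Q$. These two errors do not cancel. Concretely, for $\overrightarrow{K_{12}}=(\{1,2,3\},\{(2,1),(2,3)\})$ the coloring $\kappa(1)=\kappa(3)=1$, $\kappa(2)=2$ has no ascents, so $x_1^2x_2$ occurs in the coefficient of $t^0$ in $X_{\overrightarrow{K_{12}}}({\bf x},t)$; but your intermediate identity $X_{\overrightarrow{G}}=\sum_\sigma F_{n,[n-1]\setminus\DES_G(\sigma)}t^{\inv_{\overrightarrow{G}}(\sigma)}$ would make that coefficient $F_{3,\{1,2\}}+F_{3,\{1\}}$ (the permutations with $\inv_{\overrightarrow{K_{12}}}=0$ are $213$ and $231$), and neither of these contains $x_1^2x_2$. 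Indeed your map sends this $\kappa$ to $\sigma=312$, which has $\inv_{\overrightarrow{K_{12}}}(312)=2\neq 0=\asc(\kappa)$.

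The repair is to reverse the orientation of the standardization: list the vertices in \emph{weakly decreasing} color order, break ties within a color class exactly as you propose (decreasing rank, then decreasing label, where now $\rank_{(G,\sigma)}(x)$ is the length of the longest $G$-path ending at $x$ along which $\kappa$ strictly decreases --- your well-definedness argument transfers verbatim), and set $c_j:=\kappa(\sigma_{n+1-j})$. Then $(u,v)$ is an ascent of $\kappa$ if and only if $v$ precedes $u$ in $\sigma$, so $\asc(\kappa)=\inv_{\overrightarrow{G}}(\sigma)$ on the nose, while $c$ is weakly increasing with $c_j=c_{j+1}$ permitted only when $n-j\in\DES_G(\sigma)$. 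Summing over $c$ for fixed $\sigma$ gives $X_{\overrightarrow{G}}({\bf x},t)=\sum_\sigma F_{n,[n-1]\setminus(n-\DES_G(\sigma))}({\bf x})\,t^{\inv_{\overrightarrow{G}}(\sigma)}=\sum_\sigma \omega\bigl(F_{n,\DES_G(\sigma)}({\bf x})\bigr)t^{\inv_{\overrightarrow{G}}(\sigma)}$, and applying $\omega$ yields the theorem. Your covering lemma for properness of the inverse map and the recovery of $\sigma$ from $\kappa$ carry over to this convention with only notational changes. Once corrected, this is a genuinely different and more self-contained proof than the paper's, which instead decomposes $X_{\overrightarrow{G}}$ over acyclic orientations, applies $P$-partition theory to each piece, and then constructs a particular increasing labeling of the associated poset to convert ordinary descents into $G$-descents.
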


\begin{proof}
The first half of this proof closely follows the proof of \cite[Theorem 3.1]{CQSF}.  The second part of the proof is very different.

Let $G$ be the underlying undirected graph of $\overrightarrow{G}$ and let $G_{\bar{a}}$ be an acyclic orientation of the graph $G$, possibly different from the given orientation on $\overrightarrow{G}.$  Then define $\asc(G_{\bar{a}})$ to be the number of edges of $G_{\bar{a}}$ whose orientation matches the orientation of $\overrightarrow{G}.$  For each acyclic orientation $G_{\bar{a}}$, we will let $C(G_{\bar{a}})$ be the set of proper colorings, $\kappa,$ of $G$ such that if $(i,j)$ is a directed edge of $G_{\bar{a}}$, then $\kappa(i) < \kappa(j).$  It is clear that \begin{equation}\label{Pptn eqn}X_{\overrightarrow{G}}({\bf x},t) = \displaystyle \sum_{G_{\bar{a}} \in AO(G)} t^{\asc(G_{\bar{a}})} \displaystyle \sum_{\kappa \in C(G_{\bar{a}})} {\bf x}_{\kappa}, 
\end{equation} where $AO(G)$ is the set of acyclic orientations of $G$.

From each acyclic orientation, $G_{\bar{a}}$, we can create a poset, $P_{\bar{a}},$ on $[n]$ by letting $i <_{P_{\bar{a}}} j$ if there is an edge from $i$ to $j$ in $G_{\bar{a}}$ and extending transitively.  We define a labeling of $P_{\bar{a}}$ as a bijection from $P_{\bar{a}}$ to $[n].$  Now we give $P_{\bar{a}}$ a decreasing labeling $w_{\bar{a}}:P_{\bar{a}} \rightarrow [n],$ i.e. if $x <_{P_{\bar{a}}} y,$  then $w_{\bar{a}}(x) > w_{\bar{a}}(y).$  Let $L(P_{\bar{a}},w_{\bar{a}})$ be the set of linear extensions of $P_{\bar{a}}$ with the labeling $w_{\bar{a}}.$  For any subset $S \subseteq [n-1],$ define $n-S = \{i \mid n-i \in S\}.$ Then by the theory of P-partitions, we have 
\begin{equation} \label{ptn eqn}
\displaystyle \sum_{\kappa \in C(G_{\bar{a}})} {\bf x}_{\kappa} = \displaystyle \sum_{\sigma \in L(P_{\bar{a}}, w_{\bar{a}})} F_{n,n-\DES(\sigma)},
\end{equation} where $\DES(\sigma)$ is the usual descent set of a permutation, i.e. $\DES(\sigma) = \{i \in [n-1] \mid \sigma(i) > \sigma(i+1)\}.$

Let $e:P_{\bar{a}} \rightarrow [n]$ be the identity map, i.e. the map that takes each element of $P_{\bar{a}}$ to its original label. Then $L(P_{\bar{a}}, e)$ is the set of linear extensions of $P_{\bar{a}}$ with its original labeling, $e.$  For $\sigma \in L(P_{\bar{a}}, e),$ let $w_{\bar{a}} \sigma$ denote the product of $w_{\bar{a}}$ and $\sigma$ in $\mathfrak{S}_n.$  For $\sigma \in \mathfrak{S}_n,$ we have $w_{\bar{a}} \sigma \in L(P_{\bar{a}}, w_{\bar{a}})$ if and only if $\sigma \in L(P_{\bar{a}}, e).$ So \hyperref[ptn eqn]{(\ref*{ptn eqn})} becomes 
$$\displaystyle \sum_{\kappa \in C(G_{\bar{a}})} {\bf x}_{\kappa} = \displaystyle \sum_{\sigma \in L(P_{\bar{a}}, e)} F_{n,n-\DES(w_{\bar{a}} \sigma)}.$$
Combining this with \hyperref[Pptn eqn]{(\ref*{Pptn eqn})} gives us that $$X_{\overrightarrow{G}}({\bf x},t) = \displaystyle \sum_{G_{\bar{a}} \in AO(G)} t^{\asc(G_{\bar{a}})} \displaystyle \sum_{\sigma \in L(P_{\bar{a}}, e)} F_{n,n-\DES(w_{\bar{a}}\sigma)}.$$
Since each $\sigma \in \S_n$ is a linear extension of a unique acyclic orientation, $G_{\bar{a}(\sigma)},$ of G, we can rewrite this as $$X_{\overrightarrow{G}}({\bf x},t) = \displaystyle \sum_{\sigma \in \S_n} t^{\asc(G_{\bar{a}(\sigma)})} F_{n,n-\DES(w_{\bar{a} (\sigma)}\sigma)},$$ where $w_{\bar{a}(\sigma)}$ is a decreasing labeling of $P_{\bar{a}(\sigma)}.$

For $\sigma \in \mathfrak{S}_n,$ let $\ASC(\sigma)$ denote the usual ascent set of a permutation, i.e. $\ASC(\sigma) = \{ i \in [n-1] \mid \sigma(i) < \sigma(i+1)\}.$  Also define $\sigma^{\rm rev} \in \mathfrak{S}_n$ by letting $\sigma^{\rm rev}(i) = \sigma(n+1-i)$ for all $i$. It is not hard to see that $\asc(G_{\bar{a}(\sigma)}) = \inv_{\overrightarrow{G}}(\sigma^{\rm rev})$ and $n-\DES(w_{\bar{a}(\sigma)}\sigma) = \ASC((w_{\bar{a}(\sigma)}\sigma)^{\rm rev}),$ so we have $$X_{\overrightarrow{G}}({\bf x},t) = \displaystyle \sum_{\sigma \in \S_n} t^{\inv_{\overrightarrow{G}}(\sigma^{\rm rev})}F_{n,\ASC((w_{\bar{a}(\sigma)}\sigma)^{\rm rev})}.$$

Then by reversing $\sigma$ and letting $\tilde{w}_{\bar{a}(\sigma)}$ be an increasing labeling of $P_{\bar{a}(\sigma)},$ we have that $$X_{\overrightarrow{G}}({\bf x},t) = \displaystyle \sum_{\sigma \in \S_n}t^{\inv_{\overrightarrow{G}}(\sigma)}F_{n,\ASC(\tilde{w}_{\bar{a}(\sigma)}\sigma)}.$$

Finally applying the involution $\omega$ to both sides of the equation gives us $$\omega X_{\overrightarrow{G}}({\bf x},t) = \displaystyle \sum_{\sigma \in \S_n}t^{\inv_{\overrightarrow{G}}(\sigma)}F_{n,\DES(\tilde{w}_{\bar{a}(\sigma)}\sigma)}.$$

Up to this point, $\tilde{w}_{\bar{a}(\sigma)}$ has been any increasing labeling of $P_{\bar{a}(\sigma)},$ but now we will make it a specific one. Note that we will refer to the original labeling of the vertices of $G$ as the $G$-labeling of the graph.  For each acyclic orientation $G_{\bar{a}}$ and for each vertex $v,$ define $\rank_{\bar{a}}(v)$ as the length of the longest chain of $P_{\bar{a}}$ from a minimal element of $P_{\bar{a}}$ to $v.$ We say $v$ is a $\rank_{\bar{a}}\ i$ element if $\rank_{\bar{a}}(v) = i.$ To determine the labeling $\tilde{w}_{\bar{a}(\sigma)},$ first we label the $\rank_{\bar{a}(\sigma)}\ 0$ element with the smallest $G$-label as 1.  Then we label the $\rank_{\bar{a}(\sigma)}\ 0$ element with the next smallest $G$-label as 2.  We continue this process until all $\rank_{\bar{a}(\sigma)}\ 0$ elements are labeled. Then we repeat this process with the $\rank_{\bar{a}(\sigma)}\ 1$ elements and continue inductively until all elements are labeled.

Notice that for all $x \in [n],$ we have $\rank_{(G, \sigma)}(x) = \rank_{\bar{a}(\sigma)}(x) + 1.$ So using the labeling $\tilde{w}_{\bar{a}(\sigma)}$ constructed above, if $i$ is a descent of $\tilde{w}_{\bar{a}(\sigma)}\sigma,$ then $\sigma(i+1)$ was labeled before $\sigma(i)$ in the labeling $\tilde{w}_{\bar{a}(\sigma)}.$  Then either $\sigma(i+1)$ has a smaller $\bar{a}(\sigma)$-rank than $\sigma(i)$ or they have the same $\bar{a}(\sigma)$-rank and $\sigma(i)>\sigma(i+1).$  But in either case, $i$ is also a $G$-descent of $\sigma.$  A similar argument shows that if $i$ is a $G$-descent of $\sigma,$ then $i$ is also a descent of $\tilde{w}_{\bar{a}(\sigma)}\sigma.$  Hence $\DES(\tilde{w}_{\bar{a}(\sigma)}\sigma) = \DES_{G}(\sigma),$ and the theorem is proven.
\end{proof}

Note that our formula requires that $\overrightarrow{G}$ be labeled with $[n]$; however, the expansion of $X_{\overrightarrow{G}}({\bf x},t)$ in the $F$-basis is the same for any choice of labeling.

\begin{example} \label{AB ex}
Let us give labelings to the digraphs $\overrightarrow{P_3}$, $\overrightarrow{K_{12}}$, and $\overrightarrow{K_{21}}$ mentioned in \hyperref[Basic Results]{Section \ref*{Basic Results}} as follows:  

\begin{center}
\includegraphics[scale=0.5]{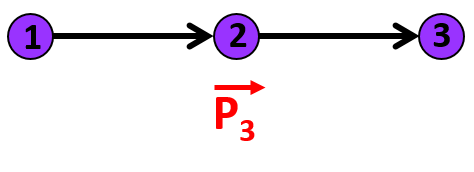}

\includegraphics[scale=0.5]{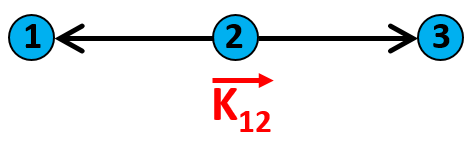} \hspace{0.5 in} \includegraphics[scale=0.5]{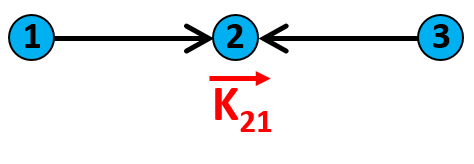}
\end{center}

To expand the chromatic quasisymmetric function of each of these in the $F$-basis, we need to calculate $\DES_G(\sigma)$ and $\inv_{\overrightarrow{G}}(\sigma)$ for each $\sigma \in \mathfrak{S}_3.$  Since $\DES_G(\sigma)$ only depends on the underlying undirected graph, this will be the same for each of $\overrightarrow{P_3}$, $\overrightarrow{K_{12}}$, and $\overrightarrow{K_{21}}$.  The calculations are shown in the chart below.

\begin{center}
\begin{tabular}{|c|c|c|c|c|}
\hline
$\sigma$ & $\DES_G(\sigma)$ & $\inv_{\overrightarrow{P_3}}(\sigma)$ &  $\inv_{\overrightarrow{K_{12}}}(\sigma)$ & $\inv_{\overrightarrow{K_{21}}}(\sigma)$\\
\hline \hline  
123 & $\emptyset$ & 0 & 1 & 1 \\
\hline
132 & $\emptyset$ & 1 & 2 & 0 \\
\hline
213 & $\emptyset$ & 1 & 0 & 2 \\
\hline
231 & $\{2\}$ & 1 & 0 & 2 \\
\hline
312 & $\{1\}$ & 1 & 2 & 0 \\
\hline
321 & $\emptyset$ & 2 & 1 & 1 \\
\hline
\end{tabular}
\end{center}

Using this, we have that $$\omega X_{\overrightarrow{P_3}}({\bf x}, t) = (F_{3, \emptyset}) + t(2F_{3, \emptyset} + F_{3, \{1\}} +F_{3, \{2\}}) +t^2(F_{3, \emptyset}),$$ $$\omega X_{\overrightarrow{K_{12}}}({\bf x},t) = (F_{3, \emptyset} + F_{3, \{2\}}) + t(2F_{3, \emptyset}) + t^2(F_{3, \emptyset} + F_{3, \{1\}}),$$ $$\omega X_{\overrightarrow{K_{21}}}({\bf x}, t) = (F_{3, \emptyset} + F_{3, \{1\}}) + t(2F_{3, \emptyset}) + t^2(F_{3, \emptyset} + F_{3, \{2\}}).$$

From this we see that $X_{\overrightarrow{P_3}}({\bf x}, t)$ is symmetric and palindromic, but $X_{\overrightarrow{K_{12}}}({\bf x}, t)$ and $X_{\overrightarrow{K_{21}}}({\bf x}, t)$ are neither.

\end{example}

By specializing \hyperref[F-basis eqn]{(\ref*{F-basis eqn})}, we obtain a $t$-analog of a result of Chung and Graham \cite[Theorem 2]{Chung} on the chromatic polynomial of a graph, which we became aware of after obtaining our results.  Let us define the $t$\textit{-analog of the chromatic polynomial of a digraph} $\overrightarrow{G}$ as $$\chi_{\overrightarrow{G}}(m, t) = \sum_{\kappa \in \kappa_m(\overrightarrow{G})} t^{\asc(\kappa)},$$ where $\kappa_m(\overrightarrow{G})$ is the set of proper colorings of $\overrightarrow{G}$ using only colors in $[m].$  From the definition, we can see that for any $m \in \PP,$ $$\chi_{\overrightarrow{G}}(m, t) = X_{\overrightarrow{G}}(1^m, t).$$  Also, if we set $t = 1,$ we see that $$\chi_{\overrightarrow{G}}(m, 1) = \chi_{G}(m),$$ where $G$ is the underlying undirected graph of $\overrightarrow{G}$ and $\chi_{G}(m)$ is the chromatic polynomial.

For $k, l \in \PP$ and a digraph $\overrightarrow{G} = ([n],E)$ with underlying undirected graph, $G,$ let $\delta_{\overrightarrow{G}}(k,l)$ denote the number of permutations $\sigma \in \mathfrak{S}_n$ such that $|\DES_G(\sigma)| = k$ and $\inv_{\overrightarrow{G}}(\sigma) = l$.  

\begin{cor}[t=1 case {\cite[Theorem 2]{Chung}}]
Let $\overrightarrow{G} = ([n],E)$ be a digraph on $n$ vertices.  Then $$\chi_{\overrightarrow{G}}(m, t) = \sum_{k, l \geq 0} \delta_{\overrightarrow{G}}(k,l)(\substack{{m+k}\\ {n}})t^l.$$ Consequently\footnote{Athanasiadis \cite{Ath2} observed this when $G = Inc(P)$}, this is a polynomial in $m$ whose coefficients are palindromic polynomials in $t.$
\end{cor}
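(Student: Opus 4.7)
The plan is to obtain the formula by specializing the $F$-basis expansion of \hyperref[F-basis thm]{Theorem~\ref*{F-basis thm}} at ${\bf x} = 1^m$, and then to derive the polynomial/palindromic consequence from an independent symmetry on proper colorings.

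First I would apply $\omega$ to both sides of \eqref{F-basis eqn} to obtain
$$X_{\overrightarrow{G}}({\bf x},t) = \sum_{\sigma \in \S_n} \omega F_{n,\DES_G(\sigma)}({\bf x})\, t^{\inv_{\overrightarrow{G}}(\sigma)}.$$
Using the identity $\omega F_{n,S} = F_{n,[n-1]\setminus S}$ together with the standard principal specialization $F_{n,S}(1^m) = \binom{m+n-1-|S|}{n}$ (verified by the substitution $j_k = i_k - |\{s \in S : s < k\}|$ applied to weakly increasing sequences $1 \leq i_1 \leq \cdots \leq i_n \leq m$ strictly increasing at positions in $S$), I obtain $\omega F_{n,S}(1^m) = \binom{m+|S|}{n}$. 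Setting $x_1 = \cdots = x_m = 1$ and $x_j = 0$ for $j > m$ in the displayed expansion of $X_{\overrightarrow{G}}$, and grouping permutations according to the pair $(|\DES_G(\sigma)|, \inv_{\overrightarrow{G}}(\sigma))$, then produces the claimed identity.

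For the final assertion of the corollary, polynomiality in $m$ is immediate because each $\binom{m+k}{n}$ is a polynomial of degree $n$ in $m$. For palindromicity in $t$, the key move is the color-complementation bijection: for any $m \in \PP$, the involution $\kappa \mapsto \bar\kappa$ on proper $[m]$-colorings of $\overrightarrow{G}$ defined by $\bar\kappa(v) = m+1-\kappa(v)$ exchanges ascents and non-ascents, so $\asc(\bar\kappa) = |E(\overrightarrow{G})| - \asc(\kappa)$, and hence $\chi_{\overrightarrow{G}}(m,t) = t^{|E(\overrightarrow{G})|}\,\chi_{\overrightarrow{G}}(m,1/t)$ for every positive integer $m$. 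Writing $\chi_{\overrightarrow{G}}(m,t) = \sum_j b_j(t)\, m^j$ and equating the polynomials in $m$ on the two sides of this palindrome identity yields $b_j(t) = t^{|E(\overrightarrow{G})|}\,b_j(1/t)$ for each $j$, which is exactly the palindromicity claim.

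I do not anticipate a genuine obstacle; the argument is essentially bookkeeping on top of \hyperref[F-basis thm]{Theorem~\ref*{F-basis thm}}. The only minor point needing care is tracking $\omega$ correctly through the principal specialization so that the binomial coefficient in the final formula appears as $\binom{m+k}{n}$ rather than $\binom{m+n-1-k}{n}$; the color-reversal bijection used for palindromicity is standard.
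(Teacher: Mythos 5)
Your derivation of the formula is essentially identical to the paper's: apply $\omega$ to the $F$-expansion of Theorem \ref{F-basis thm}, use $\omega F_{n,S} = F_{n,[n-1]\setminus S}$ and the principal specialization $F_{n,S}(1^m) = \binom{m+n-1-|S|}{n}$ to get $\binom{m+|\DES_G(\sigma)|}{n}$, and group permutations by $(|\DES_G(\sigma)|, \inv_{\overrightarrow{G}}(\sigma))$. The one place you go beyond the paper is the ``consequently'' clause: the paper asserts palindromicity without argument, whereas your color-complementation involution $\kappa \mapsto m+1-\kappa$, which gives $\chi_{\overrightarrow{G}}(m,t) = t^{|E|}\chi_{\overrightarrow{G}}(m,1/t)$ for all positive integers $m$ and hence coefficientwise, is a correct and self-contained justification (note that $|E|$ counts directed edges, so double edges contribute $2$ to the center of symmetry, consistent with $\asc(\kappa)+\des(\kappa)=|E|$).
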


\begin{proof}
We know that for any $S \subseteq [n-1]$ with $|S| = k,$ we have $F_{n,S}(1^m) = (\substack{{m+n-1-k} \\ {n}})$ (See \cite[Section 7.19]{EC2}).  Applying $\omega$ to both sides of \hyperref[F-basis eqn]{(\ref*{F-basis eqn})} gives us that $X_{\overrightarrow{G}}({\bf x},t) = \displaystyle \sum_{\sigma \in \mathfrak{S}_n} F_{n, [n-1]\backslash \DES_{G}(\sigma)}({\bf x})t^{\inv_{\overrightarrow{G}}(\sigma)}.$  Then we have
\begin{align*}
\chi_{\overrightarrow{G}}(m,t) &= X_{\overrightarrow{G}}(1^m, t)\\
&= \displaystyle \sum_{\sigma \in \mathfrak{S}_n} F_{n, [n-1]\backslash \DES_{G}(\sigma)}(1^m)t^{\inv_{\overrightarrow{G}}(\sigma)}\\
&= \displaystyle \sum_{\sigma \in \mathfrak{S}_n} (\substack{{m +|\DES_G(\sigma)|}\\ {n}}) t^{\inv_{\overrightarrow{G}}(\sigma)}\\
&= \sum_{k, l \geq 0} \delta_{\overrightarrow{G}}(k,l)(\substack{{m+k}\\ {n}})t^l.
\end{align*}
\end{proof}

\section{Expansion in the power sum symmetric function basis} \label{p-basis}

In \cite{CSF}, Stanley shows that for \textit{any} graph $G$, the symmetric function $\omega X_{G}({\bf x})$ is $p$-positive.  Since not every graph has a symmetric chromatic quasisymmetric function, here we restrict ourselves to graphs that do.  In this section, we establish $p$-positivity for all symmetric $\omega X_{\overrightarrow{G}}({\bf x},t)$ by deriving a $p$-expansion formula.  In \hyperref[Symmetry]{Section \ref*{Symmetry}}, we introduce a class of digraphs with symmetric chromatic quasisymmetric functions, which includes natural unit interval graphs as well as the directed cycle, thereby extending the symmetry result of Shareshian and Wachs.  Our $p$-expansion formula does not reduce to the Shareshian-Wachs-Athanasiadis formula \cite{CQSF} \cite{Ath} for natural unit interval graphs mentioned in the introduction. It reduces to a new formula.

Let $G = ([n],E)$ be an undirected graph and let $w = w_1 w_2 \cdots w_k$ be a word with distinct letters in $[n].$  We say $w_j$ with $1< j \leq k$ is a \textit{$G$-isolated letter} of $w$ if there is no $w_i$ with $1 \leq i <j$ such that $\{w_i, w_j\} \in E.$  


Now for any undirected graph $G=([n],E)$ and any partition $\lambda \vdash n$ with $\lambda = (\lambda_1, \lambda_2, \cdots, \lambda_l)$, we define $N_{\lambda}(G)$ as the set of all $\sigma \in \mathfrak{S}_n$ such that when $\sigma$ is divided up into contiguous segments $\alpha_1, \alpha_2, \cdots, \alpha_l$ of sizes $\lambda_1, \lambda_2, \cdots, \lambda_l,$ each $\alpha_i$ has no $G$-isolated letters and contains no $G$-descents of $\sigma$. Note that the $G$-descents here are determined by the entire $\sigma$ and cannot be determined by looking at the $\alpha_i$'s individually.

Let $G = C_8,$ the cycle on 8 vertices labeled cyclically with $[8]$ and let $ \sigma = 43587162 \in \mathfrak{S}_8.$ Then attaching the $(G, \sigma)$-rank to each letter gives $4^1 3^2 5^2 8^1 7^2 1^2 6^3 2^3$ and hence $\DES_G(\sigma) = \{3, 5, 7\}.$  If $\lambda = 3, 2, 2, 1,$ then $\alpha_1 = 435, \alpha_2 = 87, \alpha_3 = 16, \alpha_4 = 2.$  None of the $\alpha_i$ contain any $G$-descents; however, in $\alpha_3,$ 6 is a $G$-isolated letter, so $\sigma \notin N_{\lambda}(G).$  However if $\lambda = 3, 2, 1, 1, 1,$ then $\sigma \in N_{\lambda}(G).$

For each $\lambda \vdash n,$ let $z_\lambda = \prod_i m_i(\lambda)! i^{m_i(\lambda)},$ where $m_i(\lambda)$ is the multiplicity of $i$ in $\lambda$ for each $i$.

\begin{thm} \label{p thm}
Let $\overrightarrow{G}$ be a digraph such that $X_{\overrightarrow{G}}({\bf x},t)$ is symmetric. Then $\omega X_{\overrightarrow{G}}({\bf x},t)$ is p-positive.  In fact,
\begin{equation}\label{p-exp}\omega X_{\overrightarrow{G}}({\bf x},t) = \displaystyle \sum_{\lambda \vdash n} z_{\lambda}^{-1} p_{\lambda}({\bf x}) \displaystyle \sum_{\sigma \in N_{\lambda}(G)}t^{\inv_{\overrightarrow{G}}(\sigma)},
\end{equation} where $G$ is the underlying undirected graph of $\overrightarrow{G}.$
\end{thm}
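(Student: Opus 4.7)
The plan is to derive the $p$-expansion \eqref{p-exp} from the $F$-basis expansion of Theorem~\ref{F-basis thm}, using the symmetry hypothesis on $X_{\overrightarrow{G}}({\bf x},t)$ to convert between the $F$ and $p$ bases.

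I would first apply Theorem~\ref{F-basis thm} and expand each $F_{n,S}$ in the monomial quasisymmetric basis via $F_{n,S} = \sum_{\alpha :\, D(\alpha) \supseteq S} M_\alpha$, where $D(\alpha) = \{\alpha_1, \alpha_1+\alpha_2, \ldots\}$ denotes the descent set of the composition $\alpha$. Swapping summations gives
\[
\omega X_{\overrightarrow{G}}({\bf x},t) = \sum_{\alpha \models n} M_\alpha({\bf x})\cdot Q_\alpha(t), \qquad Q_\alpha(t) := \sum_{\substack{\sigma \in \mathfrak{S}_n \\ \DES_G(\sigma) \subseteq D(\alpha)}} t^{\inv_{\overrightarrow{G}}(\sigma)}.
\]
The condition $\DES_G(\sigma) \subseteq D(\alpha)$ says precisely that no $G$-descent of $\sigma$ lies interior to a segment when $\sigma$ is split into contiguous pieces of sizes $\alpha_1, \alpha_2, \ldots, \alpha_{l(\alpha)}$, which matches one of the two defining conditions of $N_{\mathrm{sort}(\alpha)}(G)$.

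The central step is to establish, for each partition $\lambda \vdash n$, the combinatorial identity that (after grouping $\alpha$'s by $\mathrm{sort}(\alpha) = \lambda$) converts $\sum_\alpha Q_\alpha(t)\, M_\alpha$ into $z_\lambda^{-1}\, p_\lambda({\bf x})\, R_\lambda(t)$, where $R_\lambda(t) = \sum_{\tau \in N_\lambda(G)} t^{\inv_{\overrightarrow{G}}(\tau)}$. My approach is bijective: given a pair $(\sigma, \alpha)$ with $\mathrm{sort}(\alpha) = \lambda$ and $\DES_G(\sigma) \subseteq D(\alpha)$, I would construct a canonical representative $\tau \in N_\lambda(G)$ together with ``refinement data'' (a cyclic rotation of each segment and a permutation of same-size segments) recording how $\sigma$ is recovered from $\tau$. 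The two defining conditions of $N_\lambda(G)$---no $G$-isolated letter in any segment, and no interior $G$-descent---are tailored precisely so that this canonical representative is uniquely determined, while $z_\lambda = \prod_i i^{m_i(\lambda)}\, m_i(\lambda)!$ (cancelling against $z_\lambda^{-1}$) counts the total refinement data: $i^{m_i(\lambda)}$ for cyclic rotations of the $m_i(\lambda)$ segments of size $i$, and $m_i(\lambda)!$ for rearrangements of equal-size segments. The coefficient $c_{\lambda,\alpha}$ of $M_\alpha$ in the monomial-quasisymmetric expansion of $p_\lambda$ should drop out naturally as the number of ways to partition the multiset of parts of $\lambda$ into a sequence summing to $\alpha$.

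The main obstacle, I expect, is showing that the statistic $\inv_{\overrightarrow{G}}$ is correctly tracked under this bijection: rotating or reordering segments can change the set of $\overrightarrow{G}$-inversions, so one must verify that the $t$-weighted sum over each orbit of refinement data equals exactly $z_\lambda \cdot t^{\inv_{\overrightarrow{G}}(\tau)}$. This orbit invariance is the technical heart of the argument, and it is here that the symmetry hypothesis on $X_{\overrightarrow{G}}({\bf x},t)$ is used in its most essential form, likely via the structural properties of the digraphs analyzed in Section~\ref{Symmetry}.
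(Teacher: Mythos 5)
There is a genuine gap at the step you call the ``central step.'' The passage from the monomial quasisymmetric expansion to the power-sum expansion is not the sign-free regrouping you describe. The coefficient of $M_\alpha$ in $p_\lambda$ is nonzero for every composition $\alpha$ obtained by summing blocks of an ordered set partition of the parts of $\lambda$, so $p_\lambda$ contributes to many $M_\alpha$ with $\mathrm{sort}(\alpha)\neq\lambda$; consequently the claimed identity $\sum_{\mathrm{sort}(\alpha)=\lambda} Q_\alpha(t)\,M_\alpha = z_\lambda^{-1}p_\lambda({\bf x})R_\lambda(t)$ cannot hold term by term. Inverting the $p$-to-$M$ transition necessarily introduces signs (already $m_{(1,1)} = \tfrac12 p_{(1,1)} - \tfrac12 p_{(2)}$), so any proof along these lines must produce cancellation, and your ``canonical representative plus refinement data'' bijection, which is weight-preserving and sign-free, is not set up to do that. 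Relatedly, your closing guess about where symmetry enters is off target: the theorem applies to \emph{any} digraph with symmetric $X_{\overrightarrow{G}}({\bf x},t)$, not only to the digraphs of Section~\ref{Symmetry}, so no structural property of circular indifference digraphs can be invoked; symmetry is needed only to guarantee that a $p$-expansion exists and to license the quasisymmetric-to-symmetric transition.

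The paper handles exactly this cancellation by a different route. It applies Athanasiadis's Proposition~\ref{Ath prop} directly to the $F$-expansion of Theorem~\ref{F-basis thm}, obtaining the signed formula
\begin{equation*}
\omega X_{\overrightarrow{G}}({\bf x},t) = \sum_{\lambda\vdash n} z_\lambda^{-1}p_\lambda \sum_{\substack{\sigma\in\S_n\\ \DES_G(\sigma)\in U_\lambda}} (-1)^{|\DES_G(\sigma)\setminus S(\lambda)|}\, t^{\inv_{\overrightarrow{G}}(\sigma)},
\end{equation*}
and then constructs, for each $\lambda$, an explicit sign-reversing, $\inv_{\overrightarrow{G}}$-preserving involution on $D_\lambda(G)\setminus N_\lambda(G)$ (a four-case analysis built from adjacent swaps and moves of $G$-isolated letters), whose fixed points are exactly the permutations in $N_\lambda(G)$, all appearing with positive sign. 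If you want to salvage your approach, you would need either to prove the system of identities $Q_\alpha(t) = \sum_\lambda z_\lambda^{-1}c_{\lambda,\alpha}R_\lambda(t)$ for all compositions $\alpha$ (where $c_{\lambda,\alpha}=[M_\alpha]p_\lambda$), or to reintroduce a signed sum and cancel it with an involution --- at which point you have essentially reconstructed the paper's argument.
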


Let $\lambda \vdash n$ with $\lambda = (\lambda_1, \lambda_2,...,\lambda_l).$  Define $s_i = \lambda_1 + \lambda_2 +...+\lambda_i,$ for $1 \leq i \leq l$ and $s_0 = 0.$  A set $A \subseteq [n-1]$ is $\lambda$\textit{-unimodal} if for $0 \leq i < l,$ the intersection of $A$ with each set of the form $\{s_i +1,...,s_{i+1}-1\}$ is a prefix of the latter. Additionally, define $S(\lambda) = \{s_1, s_2, \cdots, s_{l-1}\}.$  We will need the following result for our proof, which is implicit in the work of Adin and Roichman \cite[Theorem 3.6]{Adin}, and stated explicitly and proved by Athanasiadis. 

\begin{prop}[Athansiadis {\cite[Proposition 3.2]{Ath}}]\label{Ath prop}
Let $X({\bf x}) \in \Lambda_R^n$ be a homogeneous symmetric function of degree $n$ over a commutative $\Q$-algebra $R$ and suppose that $$X({\bf x}) = \sum_{S \subseteq [n-1]} a_S F_{n,S}({\bf x})$$ for some $a_S \in R.$  Then $$X({\bf x}) = \sum_{\lambda \vdash n}z_{\lambda}^{-1}p_{\lambda}({\bf x}) \sum_{S \in U_\lambda}(-1)^{\mid S\backslash S(\lambda)\mid}a_S,$$ where $U_\lambda$ is the set of $\lambda$-unimodal subsets of $[n-1].$
\end{prop}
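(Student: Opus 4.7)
The plan is to verify the proposed formula on a convenient $R$-basis of $\Lambda_R^n$ and then extend by $R$-linearity. Define the $R$-linear functional $\Phi_\lambda : QSym_R^n \to R$ on the fundamental basis by
$$\Phi_\lambda(F_{n,S}) = \begin{cases} (-1)^{|S \setminus S(\lambda)|}, & S \in U_\lambda, \\ 0, & S \notin U_\lambda, \end{cases}$$
so that for any expansion $X = \sum_S a_S F_{n,S}$ one has $\Phi_\lambda(X) = \sum_{S \in U_\lambda}(-1)^{|S \setminus S(\lambda)|} a_S$, which is precisely the scalar on the right-hand side of the displayed formula. Since $R$ is a commutative $\Q$-algebra, the power sums $\{p_\mu\}_{\mu \vdash n}$ form an $R$-basis of $\Lambda_R^n$, so there is a well-defined $R$-linear functional $\Psi_\lambda : \Lambda_R^n \to R$ with $\Psi_\lambda(p_\mu) = z_\mu\,\delta_{\lambda,\mu}$, i.e.\ the functional extracting the coefficient of $z_\lambda^{-1} p_\lambda$ in the $p$-expansion. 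The proposition amounts to the equality $\Phi_\lambda|_{\Lambda_R^n} = \Psi_\lambda$.

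By $R$-linearity, this equality needs to be checked only on a single $R$-basis of $\Lambda_R^n$, and the Schur basis $\{s_\mu\}_{\mu \vdash n}$ is the natural choice. The classical $p$-expansion $s_\mu = \sum_\lambda z_\lambda^{-1}\chi^\mu_\lambda\,p_\lambda$ yields $\Psi_\lambda(s_\mu) = \chi^\mu_\lambda$, the irreducible character value of $\mathfrak{S}_n$ indexed by $\mu$ evaluated on cycle type $\lambda$. On the other hand, Gessel's theorem gives the fundamental expansion $s_\mu = \sum_{T \in \mathrm{SYT}(\mu)} F_{n, D(T)}$, where $D(T)$ is the descent set of the standard Young tableau $T$, so applying $\Phi_\lambda$ produces
$$\Phi_\lambda(s_\mu) = \sum_{\substack{T \in \mathrm{SYT}(\mu) \\ D(T) \in U_\lambda}} (-1)^{|D(T) \setminus S(\lambda)|}.$$

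The verification therefore reduces to the combinatorial character identity
$$\chi^\mu_\lambda = \sum_{\substack{T \in \mathrm{SYT}(\mu) \\ D(T) \in U_\lambda}} (-1)^{|D(T) \setminus S(\lambda)|},$$
which is precisely Roichman's combinatorial rule for the irreducible characters of $\mathfrak{S}_n$ and is essentially the content of the Adin-Roichman theorem cited alongside Athanasiadis's statement. The main obstacle is establishing this character identity: in a self-contained treatment I would either quote Roichman's formula directly, or prove it by constructing a sign-reversing involution on the set of pairs $(T, S)$ with $T \in \mathrm{SYT}(\mu)$ and $D(T) \subseteq S \in U_\lambda$ whose fixed points are in bijection with the border-strip tableaux of shape $\mu$ and type $\lambda$ that appear in the Murnaghan-Nakayama rule for $\chi^\mu_\lambda$. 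With this identity in hand for each $\mu$, the $R$-linearity reduction completes the proof.
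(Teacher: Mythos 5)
This proposition is quoted in the paper from Athanasiadis without proof, so there is no in-paper argument to compare against; your proposal is, however, essentially Athanasiadis's own proof of his Proposition 3.2. Your reduction is sound: since $\{F_{n,S}\}$ is an $R$-basis of $QSym_R^n$ the functional $\Phi_\lambda$ is well defined, since $R$ is a $\Q$-algebra the $p_\mu$ (and the $s_\mu$) form $R$-bases of $\Lambda_R^n$, and checking $\Phi_\lambda = \Psi_\lambda$ on the Schur basis via Gessel's expansion $s_\mu = \sum_{T} F_{n,D(T)}$ reduces everything to Roichman's character formula $\chi^\mu_\lambda = \sum_{T:\, D(T)\in U_\lambda}(-1)^{|D(T)\setminus S(\lambda)|}$, which is exactly the Adin--Roichman result the paper cites alongside the proposition and is legitimate to quote rather than reprove.
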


In Athanasiadis's proof \cite[Theorem 3.1]{Ath} of the $p$-expansion formula conjectured by Shareshian and Wachs, he uses the $F$-basis decomposition for natural unit inteveral graphs given by Shareshian and Wachs \cite[Theorem 3.1]{CQSF} involving $P$-descents, \hyperref[Ath prop]{Proposition \ref*{Ath prop}}, and a formula for the coefficient of $\frac{1}{n}p_n$ \cite[Lemma 7.4]{CQSF}.  In the following proof, we use our $F$-basis decomposition \hyperref[F-basis thm]{Theorem \ref*{F-basis thm}} for all digraphs, \hyperref[Ath prop]{Proposition \ref*{Ath prop}}, and a sign-reversing involution.

\begin{proof}[Proof of {\hyperref[p thm]{Theorem \ref*{p thm}}}]
Combining \hyperref[Ath prop]{Proposition \ref*{Ath prop}} with our $F$-basis expansion \hyperref[F-basis thm]{Theorem \ref*{F-basis thm}}, we have that \begin{equation} \label{inv eqn} \omega X_{\overrightarrow{G}}({\bf x},t) = \displaystyle \sum_{\lambda \vdash n} z_{\lambda}^{-1} p_{\lambda} \displaystyle \sum_{\substack{{\sigma \in \S_n}\\{\DES_{G}(\sigma) \in U_{\lambda}}}} (-1)^{|\DES_{G}(\sigma)\backslash S(\lambda)|} t^{\inv_{\overrightarrow{G}}(\sigma)},
\end{equation} where $U_{\lambda}$ is the set of $\lambda$-unimodal sets on $[n-1]$.

Let us define the set $$D_{\lambda}(G) := \{\sigma \in \S_n \mid \DES_{G}(\sigma) \in U_{\lambda}\}$$ for each $\lambda \vdash n.$  Note that $N_{\lambda}(G) \subseteq D_{\lambda}(G).$ In order to prove the theorem, we will construct for each $\lambda \vdash n$ a sign-reversing, $\inv_G(\sigma)$-preserving involution, $\gamma_\lambda$, on $D_{\lambda}(G)\backslash N_{\lambda}(G).$  That is $\gamma_{\lambda}:D_{\lambda}(G)\backslash N_{\lambda}(G) \rightarrow D_{\lambda}(G)\backslash N_{\lambda}(G)$ will satisfy the following for all $\sigma \in D_{\lambda}(G) \backslash N_{\lambda}(G)$:
\begin{itemize}
\item $\gamma_{\lambda}^2(\sigma) = \sigma$,
\item $\gamma_{\lambda}$ changes $\mid \DES_{G}(\sigma) \backslash S(\lambda)\mid$ by 1,
\item $\inv_{\overrightarrow{G}}(\sigma) = \inv_{\overrightarrow{G}}(\gamma_{\lambda}(\sigma))$.
\end{itemize}

Now let us fix some useful notation.  Let $\sigma = \sigma_1 \sigma_2 \cdots \sigma_n \in \mathfrak{S}_n.$  We define a total order, $<_G$, on $[n]$ by $x <_G y$ if $\rank_{(G, \sigma)}(x) < \rank_{(G, \sigma)}(y)$ or if $\rank_{(G, \sigma)}(x) = \rank_{(G, \sigma)}(y)$ and $x < y.$  Using this notation, there is a $G$-descent of $\sigma$ at $i$ if and only if $\sigma_i >_G \sigma_{i+1}.$

Fix $\lambda$ and let $\sigma \in D_{\lambda}(G) \backslash N_{\lambda}(G).$  Break $\sigma$ up into contiguous segments of sizes $\lambda_1, \lambda_2, \cdots \lambda_l$ called $\alpha_1, \alpha_2, \cdots \alpha_l.$  Then let $\alpha_i$ be the first segment of $\sigma$ that either has a $G$-isolated letter or a $G$-descent.  Since $\DES_{G}(\sigma) \in U_{\lambda}$, there must exist a unique $k$ such that $s_{i-1} + 1 \leq k \leq s_i$ and $\alpha_i$ is of the form $\alpha_i = {\sigma}_{s_{i-1}+1} \ {\sigma}_{s_{i-1}+2} \cdots {\sigma}_{k-1} \ {\sigma}_k \ {\sigma}_{k+1} \cdots {\sigma}_{s_i},$  where ${\sigma}_{s_{i-1}+1} >_G {\sigma}_{s_{i-1}+2} >_G \cdots >_G {\sigma}_{k-1} >_G {\sigma}_k <_G {\sigma}_{k+1} <_G \cdots <_G {\sigma}_{s_i}$.

Define the involution by setting $\gamma_{\lambda}(\sigma) := \alpha_1 \alpha_2 \cdots \tilde{\alpha_i} \cdots \alpha_l$, where $\tilde{\alpha_i}$ is obtained from $\alpha_i$ by considering the following cases using the $k$ from the previous paragraph:

\ 

\textbf{Case 1a:} $ s_{i-1} +1 < k <s_i,$ and $\sigma_{k-1}<_G \sigma_{k+1}.$

\textbf{Case 1b:} $k = s_i.$

In both cases, let $\tilde{\alpha_i}$ be obtained from $\alpha_i$ by switching $\sigma_{k-1}$ and $\sigma_k.$  Since $\sigma_{k-1} >_G \sigma_{k},$ there cannot be an edge between them.  If there were, since $\sigma_{k-1}$ precedes $\sigma_k$ in $\sigma$, then $\rank_{(G,\sigma)}(\sigma_{k-1}) < \rank_{(G,\sigma)}(\sigma_k),$ but then $\sigma_{k-1} <_G \sigma_k,$ which is a contradiction.  Therefore switching $\sigma_k$ and $\sigma_{k-1}$ doesn't change the number of $\overrightarrow{G}$-inversions, but it decreases the number of $G$-descents by 1. Notice that $\gamma_{\lambda}(\sigma) \in D_{\lambda}(G) \backslash N_{\lambda}(G).$

\

\textbf{Case 2a:} $k = s_{i-1}+1$ and $\{{\sigma}_k, {\sigma}_{k+1}\} \notin E(G).$

\textbf{Case 2b:} $ s_{i-1} +1 < k <s_i,$ ${\sigma}_{k-1} >_G {\sigma}_{k+1}$ and $\{{\sigma}_k, {\sigma}_{k+1}\} \notin E(G).$

For both of these cases, let $\tilde{\alpha_i}$ be obtained from $\alpha_i$ by switching ${\sigma}_k$ and ${\sigma}_{k+1}.$  This will create one new $G$-descent between $\sigma_k$ and $\sigma_{k+1}$, but will not affect the number of $\overrightarrow{G}$-inversions since there is no edge between ${\sigma}_k$ and ${\sigma}_{k+1}$.  Notice that $\gamma_{\lambda}(\sigma) \in D_{\lambda}(G) \backslash N_{\lambda}(G).$

\ 

Now define $\sigma_m$ as the largest (or rightmost) $G$-isolated letter in $\alpha_i$ such that $m>k.$   If there are no $G$-isolated letters after $\sigma_k,$ then define $\sigma_m = 0.$ 

\ 

\textbf{Case 3a:} $s_{i-1} +1 < k <s_i,$ $\sigma_{k-1} >_G \sigma_{k+1}$, $\{\sigma_k, \sigma_{k+1}\} \in E(G)$, $\sigma_m \neq 0,$ and $\sigma_{s_{i-1}+1} <_G \sigma_m.$  

\textbf{Case 3b:} $k = s_{i-1} +1$ and $\{\sigma_k, \sigma_{k+1}\} \in E(G).$

In case 3b since $\alpha_i$ has no $G$-descents, it must have a $G$-isolated letter, so $a_m \neq 0.$ In both cases, obtain $\tilde{\alpha_i}$ by moving $\sigma_m$ before $\sigma_{s_{i-1}+1}.$  Since $\sigma_m$ is a $G$-isolated letter and thus is not connected to any letter before it in $\alpha_i$, this rearrangement will not affect the number of $\overrightarrow{G}$-inversions, but it will create one new $G$-descent between  $\sigma_m$ and $\sigma_{s_{i-1}+1}$. Notice that $\gamma_{\lambda}(\sigma) \in D_{\lambda}(G) \backslash N_{\lambda}(G).$

\ 

\textbf{Case 4a:} $ s_{i-1} +1 < k <s_i,$ $\sigma_{k-1} >_G \sigma_{k+1},$ $\{\sigma_k, \sigma_{k+1}\} \in E(G),$ and $\sigma_m=0.$

\textbf{Case 4b:} $ s_{i-1} +1 < k <s_i,$ $\sigma_{k-1} >_G \sigma_{k+1},$ $\{\sigma_k, \sigma_{k+1}\} \in E(G),$ and $\sigma_m <_G \sigma_{s_{i-1}+1}.$  

Then move $\sigma_{s_{i-1}+1}$ to the first spot after $\sigma_k$ that will not create a new $G$-descent.  We see that this reduces the number of $G$-descents by 1.  Now notice that wherever we finally place $\sigma_{s_{i-1}+1},$ all the $\sigma_j$ that come before this position must satisfy $\sigma_j <_G \sigma_{s_{i-1}+1}.$  It follows that there is no edge between $\sigma_{s_{i-1}+1}$ and $\sigma_j$ since if there were then we would have $\rank_{(G,\sigma)}(\sigma_j) > \rank_{(G, \sigma)}(\sigma_{s_{i-1}+1})$.  Hence this rearrangement does not affect the number of $\overrightarrow{G}$-inversions. Notice that $\gamma_{\lambda}(\sigma) \in D_{\lambda}(G) \backslash N_{\lambda}(G).$

\ 

Now notice that we have covered all cases and these cases are mutually exclusive.  We leave it to the reader to check that Case 1 and Case 2 will reverse each other, as will Case 3 and Case 4, so this is the involution we were looking for.  

Then the only elements of $D_{\lambda}(G)$ that remain in the formula \hyperref[inv eqn]{\ref*{inv eqn}} are those of $N_{\lambda}(G).$  Since these permutations have all their $G$-descents in $S(\lambda)$ by definition, the theorem is proven.
\end{proof}

In \cite[Proposition 7.8]{CQSF}, Shareshian and Wachs showed that when $G$ is a natural unit interval graph, the coefficient of each $z_\lambda^{-1}p_{\lambda}$ in $\omega X_{G}({\bf x},t)$ factors. Though the coefficients do not generally factor in the digraph case, we show in \hyperref[Cn p thm]{Theorem \ref*{Cn p thm}} below that the coefficient of each $z_\lambda^{-1}p_{\lambda}$ in $\omega X_{\overrightarrow{G}}({\bf x},t)$ does have a nice factorization involving the Eulerian polynomials when $\overrightarrow{G}$ is the directed cycle, $\overrightarrow{C_n},$ as defined in \hyperref[dir cyc ex]{Example \ref*{dir cyc ex}}.  We show in \hyperref[Symmetry]{Section \ref*{Symmetry}} that $X_{\overrightarrow{C_n}}({\bf x},t)$ is symmetric.

Let $\sigma = \sigma_1 \sigma_2 \cdots \sigma_n \in \mathfrak{S}_n$ and define $\asc(\sigma) = |\{i \mid \sigma_i < \sigma_{i+1}\}|.$  Define $\exc(\sigma) = |\{i \mid \sigma_i > i\}|.$  Let us define the \textit{Eulerian polynomial}, $A_n(t),$ by the formula $A_n(t) =\sum_{\sigma \in \mathfrak{S}_n} t^{\asc(\sigma)}.$  The following lemma is a special case of \cite[Theorem 3.1]{Brent} but is proven here for completeness.

\begin{lemma} \label{eul lem}
Let $A_k(t)$ denote the Eulerian polynomial. Then $$\displaystyle \sum_{\substack{\sigma \in \mathfrak{S}_k \\ \sigma \ k-cycle}}t^{\exc(\sigma)} = tA_{k-1}(t).$$ 
\end{lemma}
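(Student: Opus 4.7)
The plan is to exhibit an explicit bijection between $k$-cycles in $\mathfrak{S}_k$ and permutations in $\mathfrak{S}_{k-1}$ that transports the excedance statistic (minus one) to the ascent statistic. Since every $k$-cycle contains the letter $k$, I will uniquely represent each $k$-cycle $\sigma$ in cycle notation starting with $k$, writing $\sigma = (k, a_1, a_2, \ldots, a_{k-1})$, where $a_1 a_2 \cdots a_{k-1}$ is a permutation of $[k-1]$. This assignment $\sigma \mapsto a_1 a_2 \cdots a_{k-1}$ is clearly a bijection from the set of $k$-cycles in $\mathfrak{S}_k$ onto $\mathfrak{S}_{k-1}$.

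Next I would compute $\exc(\sigma)$ directly from the one-line description of this cycle. The map $\sigma$ sends $k \mapsto a_1$, $a_j \mapsto a_{j+1}$ for $1 \le j \le k-2$, and $a_{k-1} \mapsto k$. From this I read off three contributions: position $k$ is never an excedance (its image $a_1 \le k-1$ is smaller than $k$); position $a_{k-1}$ is always an excedance (its image is $k$, which exceeds $a_{k-1} \le k-1$); and position $a_j$ for $1 \le j \le k-2$ is an excedance precisely when $a_{j+1} > a_j$, which is exactly an ascent of the word $a_1 a_2 \cdots a_{k-1}$. Summing these contributions gives
\begin{equation*}
\exc(\sigma) \;=\; \asc(a_1 a_2 \cdots a_{k-1}) + 1.
\end{equation*}

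Combining the two observations, the bijection yields
\begin{equation*}
\sum_{\substack{\sigma \in \mathfrak{S}_k \\ \sigma \text{ a } k\text{-cycle}}} t^{\exc(\sigma)}
\;=\; \sum_{\tau \in \mathfrak{S}_{k-1}} t^{\asc(\tau)+1}
\;=\; t\,A_{k-1}(t),
\end{equation*}
which is the claimed identity. The argument is essentially a bookkeeping proof, so there is no serious obstacle; the only point requiring care is the verification that no excedance is created at position $k$ and that the position $a_{k-1}$ contributes the extra factor of $t$. I would also briefly remark that the identification $\asc(\tau) = \asc(a_1 \cdots a_{k-1})$ is immediate from the definition of the Eulerian polynomial used in the paper, so no change of statistic on $\mathfrak{S}_{k-1}$ is required.
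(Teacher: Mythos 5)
Your proof is correct and is essentially the same as the paper's: the paper also represents each $k$-cycle in cycle notation anchored at $k$ (written as $(\sigma_1,\ldots,\sigma_{k-1},k)$ rather than $(k,a_1,\ldots,a_{k-1})$, which is the same cycle), obtains the bijection with $\mathfrak{S}_{k-1}$, and notes that $\exc(\sigma)=\asc(\mu)+1$ because the position mapping to $k$ is always an excedance while position $k$ never is. Your write-up simply spells out the position-by-position bookkeeping in more detail.
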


\begin{proof}
If we write each $\sigma$ in cycle form with $k$ written as the last element of the cycle, i.e. $\sigma = (\sigma_1, \sigma_2, \cdots, \sigma_{k-1}, k)$, then we obtain $\mu = \sigma_1 \sigma_2 \cdots \sigma_{k-1} \in \mathfrak{S}_{k-1}.$  This gives us a bijection between k-cycles $\sigma \in \mathfrak{S}_k$ and elements $\mu \in \mathfrak{S}_{k-1}.$ In fact, $\exc(\sigma) = \asc(\mu) + 1$ since the pair $(\sigma_{k-1}, k)$ will always form an exceedance, but $(k, \sigma_1)$ will never form an exceedence.  Hence, we have the following:

\begin{align*}
\displaystyle \sum_{\substack{\sigma \in \mathfrak{S}_k \\ \sigma \ k-cycle}}t^{\exc(\sigma)}
&= \displaystyle \sum_{\mu \in \mathfrak{S}_{k-1}}t^{1+\asc(\mu)}\\
&= t A_{k-1}(t).
\end{align*}
\end{proof}

\begin{thm} \label{Cn p thm}
Let $\lambda = (\lambda_1, \lambda_2, \cdots, \lambda_k)$ be a partition of $n$.  If $k \geq 2$, then 
\begin{equation} \label{p form Cn} \sum_{\sigma \in N_{C_n, \lambda}}t^{\inv_{\overrightarrow{C_n}}(\sigma)} = nt A_{k-1}(t) \displaystyle \prod_{i = 1}^k [\lambda_i]_t,
\end{equation} where $[n]_t = 1 + t + \cdots + t^{n-1}.$  In the case that $\lambda = (n),$ we have 
\begin{equation} \sum_{\sigma \in N_{C_n, (n)}}t^{\inv_{\overrightarrow{C_n}}(\sigma)} = nt[n-1]_t.
\end{equation} Hence the coefficient of $\frac{1}{n}p_n$ in $\omega X_{\overrightarrow{C_n}}({\bf x},t)$ is $nt[n-1]_t$ and for all other $\lambda \vdash n$, the coefficient of $z_{\lambda}^{-1}p_\lambda$ in $\omega X_{\overrightarrow{C_n}}({\bf x},t)$ is $nt  A_{k-1}(t) \prod_{i = 1}^k [\lambda_i]_t.$ 
\end{thm}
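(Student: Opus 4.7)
By Theorem \ref{p thm} the coefficient of $z_\lambda^{-1}p_\lambda$ in $\omega X_{\overrightarrow{C_n}}(\mathbf{x},t)$ equals $\sum_{\sigma\in N_\lambda(C_n)}t^{\inv_{\overrightarrow{C_n}}(\sigma)}$, so the plan is to compute this generating function by parameterizing $N_\lambda(C_n)$ via combinatorial data whose inversion statistic factors cleanly. Lemma \ref{eul lem} will supply the factor $tA_{k-1}(t)$ by rewriting a sum over $k$-cycles. The two cases $\lambda=(n)$ and $k\geq 2$ are handled separately.

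For $\lambda=(n)$: A permutation $\sigma\in N_{(n)}(C_n)$ must be a search order on $C_n$ (each new letter adjacent in $C_n$ to some earlier one) with no $G$-descents. I plan to show by a direct case analysis of the rank-tie rule that from each starting vertex $v_s\in V(C_n)$ there are exactly $n-1$ such search orders, parameterized by the cyclic position of $\sigma_n$ relative to $v_s$, and that these have $\inv_{\overrightarrow{C_n}}$-values $1,2,\ldots,n-1$ (each extension that grows the arc in one of the two directions contributes one inversion, and the wrap-around edge contributes the final one). Summing over all $n$ starting vertices gives $nt[n-1]_t$.

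For $k\geq 2$: The arcs $A_i$ (the underlying vertex sets of the segments $\alpha_i$) form a cyclic partition of $V(C_n)$ into $k$ proper arcs. I plan to establish a bijection
\[
N_\lambda(C_n)\ \longleftrightarrow\ V(C_n)\times\bigl\{k\text{-cycles in }\mathfrak{S}_k\bigr\}\times\prod_{i=1}^k [\lambda_i],
\]
sending $\sigma$ to $(v,\tau,(c_1,\ldots,c_k))$, where $v$ is the clockwise-first vertex of $A_1$, $\tau$ is the $k$-cycle defined by $\tau(i)=j$ precisely when $A_j$ lies immediately counterclockwise of $A_i$ on $C_n$, and $c_i\in[\lambda_i]$ is the starting position of $\alpha_i$ within $A_i$. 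Under this bijection the inversion statistic decomposes as
\[
\inv_{\overrightarrow{C_n}}(\sigma)\ =\ \exc(\tau)\ +\ \sum_{i=1}^k(c_i-1),
\]
where $\exc(\tau)$ counts the $k$ boundary edges of $\overrightarrow{C_n}$ that point from a later segment into an earlier one (these correspond exactly to the $i$ with $\tau(i)>i$), and each $c_i-1$ counts the within-arc inversions of the unique valid search order on $A_i$ starting at position $c_i$. Summing and applying Lemma \ref{eul lem} then yields
\[
n\cdot\Bigl(\sum_{\tau\text{ $k$-cycle}}t^{\exc(\tau)}\Bigr)\cdot\prod_{i=1}^k[\lambda_i]_t\ =\ ntA_{k-1}(t)\prod_{i=1}^k[\lambda_i]_t.
\]

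The main obstacle is rigorously verifying the bijection, in particular showing that for each triple $(v,\tau,(c_1,\ldots,c_k))$ there is exactly one valid $\sigma\in N_\lambda(C_n)$. The subtlety is that for $i>1$ the ranks used in the no-$G$-descent condition inside $\alpha_i$ depend on the entire prefix $\alpha_1\cdots\alpha_i$: paths through previously placed arcs can boost the ranks of vertices near the boundary of $A_i$, so the ``correct'' valid search order on $A_i$ is not determined by $A_i$ in isolation. I expect to overcome this by an induction on $i$ that tracks the rank-profile imposed on each endpoint of $A_i$ by the preceding segments and shows that, regardless of this profile, there is a unique alternating search order from each starting position $c_i\in[\lambda_i]$ that avoids any $G$-descent inside $\alpha_i$, contributing precisely $c_i-1$ within-arc inversions to $\inv_{\overrightarrow{C_n}}$.
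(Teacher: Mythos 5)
Your proposal is correct and follows essentially the same route as the paper: the case $\lambda=(n)$ is counted via acyclic orientations of $C_n$ with a unique sink, and for $k\geq 2$ the paper constructs exactly your bijection $N_{C_n,\lambda}\leftrightarrow [n]\times\{k\text{-cycles}\}\times\prod_i[\lambda_i]$ (phrased via its ``$V$-digraphs'' $\overrightarrow{V}_{\lambda_i,b_i}$), with the same decomposition $\inv_{\overrightarrow{C_n}}(\sigma)=\exc(\mu^{-1})+\sum_i(b_i-1)$ and the same appeal to Lemma \ref{eul lem}. The prefix-dependence of ranks that you flag is handled in the paper by an explicit rank-recovery procedure giving the inverse map, so your proposed induction is a reasonable way to fill in the same ``one can check this is a bijection'' step.
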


\begin{proof}
For the following proof, we will fix the labeling of $\overrightarrow{C_n}$ by $[n]$ so that $E(\overrightarrow{C_n}) = \{(i, i+1) \mid 1 \leq i < n\} \cup \{(n,1)\}.$  Note that any labeling of the vertices of $\overrightarrow{C_n}$ with $[n]$ will work the same way.

Let $\lambda = (\lambda_1, \lambda_2, \cdots, \lambda_k)$ be a partition of $n$ and let $\sigma \in \mathfrak{S}_n$ be partitioned into pieces of size $\lambda_1, \lambda_2, \cdots, \lambda_k$ so that $\sigma = \alpha_1 \alpha_2 \cdots \alpha_k,$ where $\cdot$ represents concatenation.  Then we know $\sigma \in N_{C_n, \lambda}$ if and only if each $\alpha_i$ has no $C_n$-descents and no $C_n$-isolated letters.  

For each $\alpha_i,$ we will construct a connected acyclic digraph $\overrightarrow{G_i}$ on the letters of $\alpha_i$ such that the underlying undirected graph, $G_i,$ is an induced subgraph of $C_n.$

Let $\overrightarrow{G_i}$ be the directed graph whose vertex set is the set of letters of $\alpha_i$ and whose edges have the form $(a,b)$ if $b$ precedes $a$ in $\alpha_i$ and $\{a,b\} \in E(C_n).$ Then each $\overrightarrow{G_i}$ is a connected acyclic digraph with a unique sink, which is the first letter of $\alpha_i$.  Indeed if there were another sink, then the second sink would be a $C_n$-isolated letter of $\alpha_i$.  Hence if $\lambda \neq (n),$ each underlying undirected graph, $G_i,$ is a path of length $\lambda_i$ in $C_n.$  If $\lambda = (n),$ then $G_1 = C_n$.  

For example, let $n = 9,$ $\lambda = (4,3,2)$ and $\sigma = 546389721.$  Then $\alpha_1 = 5463,$ $\alpha_2 = 897,$ and $\alpha_3 = 21.$  The corresponding acyclic digraphs as as shown below:

\begin{center}
\includegraphics[scale=0.5]{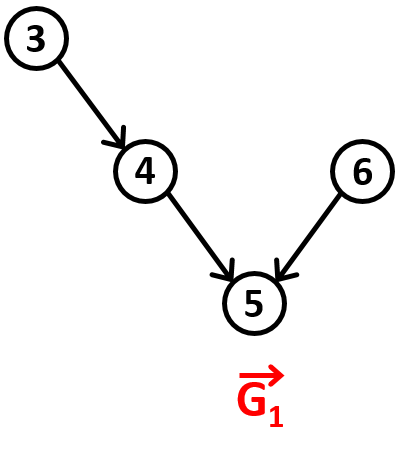} \hspace{0.3 in}
\includegraphics[scale=0.5]{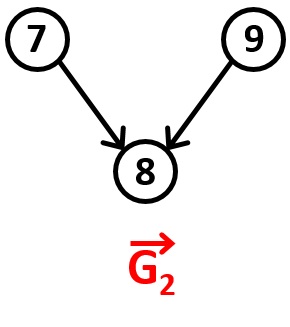}\hspace{0.3 in}
\includegraphics[scale=0.5]{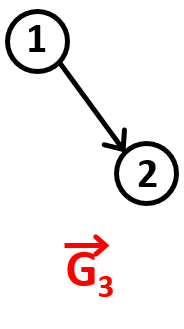}
\end{center}

We can uniquely recover $\sigma$ from the $k$-tuple $(\overrightarrow{G_1},\overrightarrow{G_2},\cdots,\overrightarrow{G_k}).$  For each vertex $x$ in each $\overrightarrow{G_i},$ let us define $\rank(x)$ as follows.  Let $x \in \overrightarrow{G_i}$ and let $V_x$ be the set of all vertices $y$ in $\overrightarrow{G_1}, \cdots, \overrightarrow{G_{i-1}}$ such that $\{x,y\}$ is an edge of $C_n.$  If $x$ is a sink of $\overrightarrow{G_i},$ then $\rank(x) = \rm{max}\{1, \substack{\rm{max}\\ y \in V_x}(\rank(y)+1)\}.$  If $x$ is not a sink of $\overrightarrow{G_i},$ then there exists a unique vertex $z$ of $\overrightarrow{G_i}$ such that $(x,z) \in E(\overrightarrow{G_i}).$  Then $\rank(x) = \rm{max}\{\rank(z)+1, \substack{\rm{max}\\ y \in V_x}(\rank(y)+1)\}.$ Then create each $\alpha_i$ by starting with all vertices of $\overrightarrow{G_i}$ of rank 1 in increasing order of their label, then all vertices of $\overrightarrow{G_i}$ of rank 2 in increasing order of their label, etc.  Then we have $\sigma = \alpha_1 \alpha_2 \cdots \alpha_k.$  Notice that for all $x \in [n],$ we have that $\rank(x) = \rank_{({C_n}, \sigma)}(x).$


Notice that the number of $\overrightarrow{C_n}$-inversions of $\alpha_i$ is the number of directed edges of $\overrightarrow{G_i}$ that are oriented in the same direction as the corresponding directed edge of $\overrightarrow{C_n}.$

\ 

\textbf{Case 1:} $\lambda = (n).$
In this case $\overrightarrow{G_1}$ is an acyclic orientation of $C_n$ with a unique sink. So we need to find the number of $\overrightarrow{C_n}$-inversions of the corresponding $\sigma,$ i.e. the number of edges of $\overrightarrow{G_1}$ that are oriented the same direction as the corresponding edge in $\overrightarrow{C_n}.$  In order to construct an acyclic orientation of $C_n$ with a unique sink (and hence a unique source), we have $n$ choices for a sink and then $n-1$ choices remaining for a source.  There are two paths from the sink to the source.  One path is oriented as in $\overrightarrow{C_n}$ and the other path is oriented opposite $\overrightarrow{C_n}.$  The number of edges of the path oriented the same direction as $\overrightarrow{C_n}$ can be $1, 2, \cdots,$ or $n-1,$ depending on the choice of the source.  So $$\displaystyle \sum_{\sigma \in N_{C_n, (n)}} t^{\inv_{\overrightarrow{C_n}}(\sigma)} = n(t + t^2 + \cdots + t^{n-1}) = nt[n-1]_t.$$

\textbf{Case 2:} $\lambda \neq (n).$
For $a,b \in \PP$ with $b \leq a,$ define a $V$-digraph $\overrightarrow{V}_{a,b}$ to be a digraph with vertex set $\{v_1, v_2, \cdots, v_a\}$ and edge set $\{(v_i, v_{i+1}) \mid 1 \leq i < b\} \cup \{(v_{i+1}, v_i) \mid b \leq i <a\}.$  We will call $v_1$ the first vertex of $\overrightarrow{V}_{a,b}$ and $v_a$ the last vertex of $\overrightarrow{V}_{a,b}.$  For $1 \leq i < a$ we say the successor of $v_i$ is $v_{i+1}.$  Let $V_{a,b}$ denote the underlying undirected graph of $\overrightarrow{V}_{a,b}.$ For all $a,b \in \PP$ with $b \leq a,$ we can see that $V_{a,b}$ is a path. For example, $\overrightarrow{V}_{4,2}$ is shown below:

\begin{center}
\includegraphics[scale=0.5]{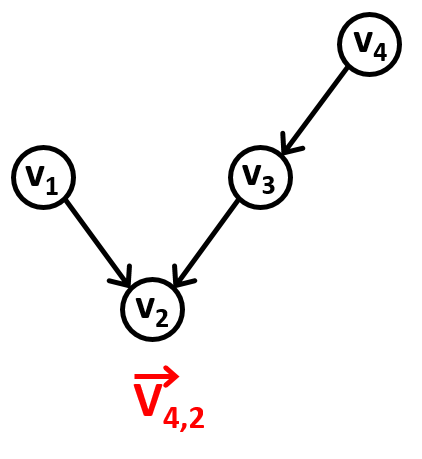}
\end{center}

Let $\lambda = (\lambda_1, \lambda_2, \cdots, \lambda_k).$  Then we will construct a bijection from $N_{C_n, \lambda}$ to the set $M_{\lambda}$ of $(k+2)$-tuples $(x, \mu,\overrightarrow{V}_{\lambda_1, b_1}, \overrightarrow{V}_{\lambda_2, b_2}, \cdots, \overrightarrow{V}_{\lambda_k, b_k}),$ where
\begin{itemize}
\item $x \in [n]$
\item $\mu \in \mathfrak{S}_k$ is a $k$-cycle
\item for each $i,$ we have $1 \leq b_i \leq \lambda_i$
\end{itemize}

Let $\sigma \in N_{C_n, \lambda}.$  Recall our earlier map from $\sigma \in N_{C_n, \lambda}$ to the $k$-tuples $(\overrightarrow{G_1}, \overrightarrow{G_2}, \cdots, \overrightarrow{G_k}).$  For each $1 \leq i \leq k,$ define $b_i$ as one more than the number of edges of $\overrightarrow{G_i}$ that match the orientation of $\overrightarrow{C_n}.$ Then $\overrightarrow{V}_{\lambda_i, b_i}$ is simply $\overrightarrow{G_i}$ without labels.  To determine $\mu = (a_1, a_2, \cdots, a_k),$ we start by letting $a_1 = j_1$ where $\overrightarrow{G_{j_1}}$ contains the vertex labeled $1.$  From the remaining $\overrightarrow{G_i},$ let $\overrightarrow{G_{j_2}}$ be the digraph with the smallest label on its sink.  Then let $a_2 = j_2.$  From the remaining $\overrightarrow{G_i},$ let $\overrightarrow{G_{j_3}}$ be the digraph with the smallest label on its sink.  Then let $a_3 = j_3.$  We continue this process until we find $a_k.$  Lastly, to determine $x,$ suppose $1$ is on the $d^{th}$ vertex of $\overrightarrow{G_i}.$  Then $x = \lambda_1 + \lambda_2 + \cdots + \lambda_{i-1} + d.$

In the other direction, suppose we have $$(x, \mu,\overrightarrow{V}_{\lambda_1, b_1}, \overrightarrow{V}_{\lambda_2, b_2}, \cdots, \overrightarrow{V}_{\lambda_k, b_k}) \in M_{\lambda}.$$  For each $1 \leq i \leq k$, we will say that the successor of the last vertex of $\overrightarrow{V}_{\lambda_i, b_i}$ is the first vertex of $\overrightarrow{V}_{\lambda_{\mu(i)}, b_{\mu(i)}}.$  

There exists unique $1 \leq i \leq k$ and $1 \leq d \leq \lambda_i$ such that $x = \lambda_1 + \lambda_2 + \cdots + \lambda_{i-1} + d.$  Place a $1$ on the $d^{th}$ vertex of $\overrightarrow{V}_{\lambda_i, b_i}.$ Then place a $2$ on its successor, and continue labeling successors in order until all $n$ vertices are labeled.  Now the labeled $\overrightarrow{V}_{\lambda_i, b_i}$ is the same as $\overrightarrow{G_i},$ so we can recover $\sigma$ as described earlier.  One can check that this is a bijection. 

Now suppose we have some $\sigma \in \mathfrak{S}_n$ that corresponds to $$(x, \mu, \overrightarrow{V}_{\lambda_1, b_1}, \overrightarrow{V}_{\lambda_2, b_2}, \cdots, \overrightarrow{V}_{\lambda_k, b_k})\in M.$$  Notice that using the bijection, the number of $\overrightarrow{C_n}$-inversions of $\alpha_i$ is equal to $b_i-1.$  One can check that the number of $\overrightarrow{C_n}$-inversions between distinct $\alpha_i$ in $\sigma$ is the same as the number of excedances of $\mu^{-1},$ because for each $i \in [k],$ there is an edge of $\overrightarrow{C_n}$ directed from the last vertex of $\overrightarrow{V}_{\lambda_i, b_i}$ to the first vertex of $\overrightarrow{V}_{\lambda_{\mu(i)}, b_{\mu(i)}}.$ Then one can see that 
$$\inv_{\overrightarrow{C_n}}(\sigma)= \exc(\mu^{-1}) + (b_1 -1) +(b_2 - 1) + \cdots (b_k -1).$$

Using \hyperref[eul lem]{Lemma \ref*{eul lem}}, we see that $$\displaystyle \sum_{\substack{{\mu \in \mathfrak{S}_k}\\{\mu \ k-cycle}}} t^{\exc(\mu^{-1})} = \displaystyle \sum_{\substack{{\mu \in \mathfrak{S}_k}\\{\mu \ k-cycle}}} t^{\exc(\mu)} = tA_{k-1}(t).$$  Now since for each $1 \leq i \leq k,$ we have $1 \leq b_i \leq \lambda_i,$ and since we have $n$ choices for $x$ in the bijection, we can see that \hyperref[p form Cn]{(\ref*{p form Cn})} is true. 
\end{proof}

\section{Symmetry} \label{Symmetry}

In this section, we define circular indifference digraphs and show that they have symmetric chromatic quasisymmetric functions.  For $a, b \in [n],$ we define the \textit{circular interval} $[a,b]$ of $[n]$ as
$$ [a,b] := \begin{cases} \{a, a+1, a+2, ..., b\} &\mbox{ if } a\leq b \\  \{a, a+1, a+2, \cdots, n, 1, 2, \cdots, b\} &\mbox{ if } a>b \end{cases}.$$

\begin{definition} We call a digraph, $\overrightarrow{G} = ([n],E)$, a \textit{circular indifference digraph} if there exists a collection of circular intervals, $I,$ of $[n]$ such that $E =\{(i,j)\mid [i,j] \mbox{ is contained in a circular interval of }I \}.$
\end{definition}

\begin{example} Suppose we have the set of circular intervals $I = \{[1,3], [2,4], [4,5], [5,1]\}.$  Then the corresponding circular indifference digraph is shown below.

\centering \includegraphics[scale=0.5]{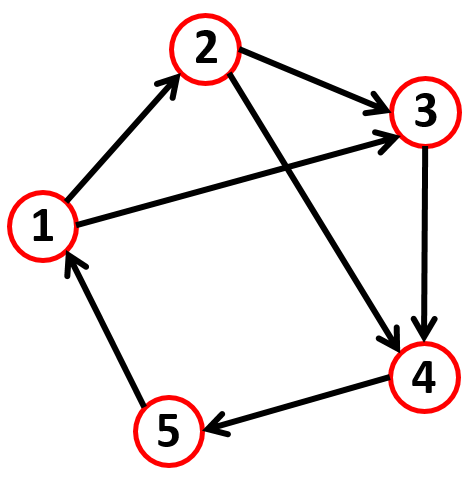}
\end{example}

The underlying undirected graphs of these circular indifference digraphs are the \textit{circular indifference graphs} defined by Stanley in \cite{CSF}.  We discuss circular indifference graphs and their relation to other well-known classes of graphs in \hyperref[Graphs]{Appendix \ref*{Graphs}}.  

In \cite[Theorem 4.5]{CQSF}, Shareshian and Wachs show that $X_G({\bf x},t)$ is symmetric if $G$ is a natural unit interval graph.  As discussed in \hyperref[Graphs]{Appendix \ref*{Graphs}}, when natural unit interval graphs are viewed as digraphs, they are acyclic circular indifference digraphs.  Next we extend the symmetry result of Shareshian and Wachs to all circular indifference digraphs.  Our proof of symmetry is similar to that of Shareshian and Wachs.  First we need the following lemmas.

Let us define five digraphs we will need for the next lemma.  
\begin{itemize}
\item $\overrightarrow{K_{12}} = (\{a,b,c\}, \{(b,a), (b,c)\}).$  
\item $\overrightarrow{K_{21}}=(\{a, b, c\}, \{(a,b), (c,b)\}).$  
\item $\overleftarrow{\overrightarrow{K_{12}}} = (\{a, b, c\}, \{(a,b), (b,a), (b,c)\}).$  
\item $\overleftarrow{\overrightarrow{K_{21}}} = (\{a, b, c\}, \{(a,b), (b,a), (c,b)\}).$  
\item $\overleftarrow{\overrightarrow{P_3}} = (\{a, b, c\}, \{(a,b), (b,a), (b,c), (c,b)\}).$  
\end{itemize}
Below we see all five digraphs.

\begin{center}
\includegraphics[scale=0.5]{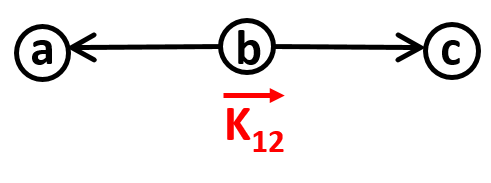} \hspace{0.5 in} \includegraphics[scale=0.5]{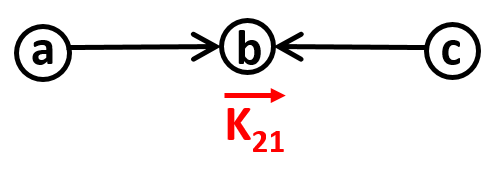}

\includegraphics[scale=0.5]{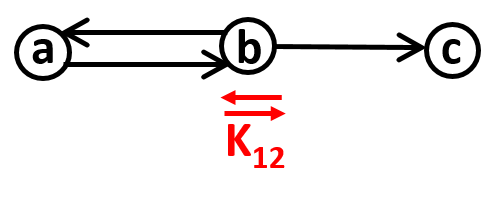} \hspace{0.5 in} \includegraphics[scale=0.5]{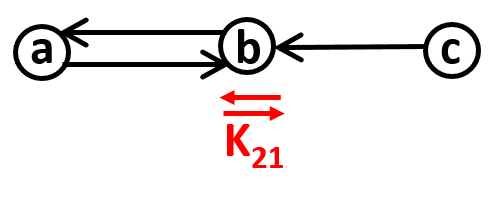}

\includegraphics[scale=0.5]{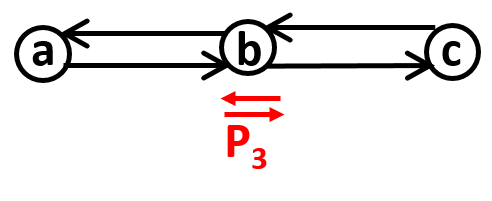}
\end{center}

\begin{lemma} \label{claw-free lemma}
Let $\overrightarrow{G}$ be a digraph that has no induced subdigraphs isomorphic to $\overrightarrow{K_{12}}, \overrightarrow{K_{21}},\overleftarrow{\overrightarrow{K_{12}}}, \overleftarrow{\overrightarrow{K_{21}}}$ or $\overleftarrow{\overrightarrow{P_3}}.$  Then the underlying undirected graph, $G,$ is claw-free, i.e. $G$ does not contain an induced subgraph isomorphic to $K_{31}.$
\end{lemma}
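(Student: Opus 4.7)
The plan is to prove the contrapositive: assuming $G$ contains an induced claw $K_{1,3}$, I will exhibit an induced subdigraph of $\overrightarrow{G}$ isomorphic to one of the five listed digraphs. Let $b$ be the center of the claw and $a,c,d$ its three leaves, so that $\{b,a\}$, $\{b,c\}$, $\{b,d\}$ are edges of $G$ while no edges exist among $\{a,c,d\}$.

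For each leaf $v\in\{a,c,d\}$, the edges of $\overrightarrow{G}$ between $b$ and $v$ fall into exactly one of three \emph{types}, depending on whether only $(b,v)$, only $(v,b)$, or both $(b,v)$ and $(v,b)$ lie in $E$; call these types $O$, $I$, and $D$ respectively (at least one such directed edge must exist because $\{b,v\}\in E(G)$, and these exhaust the possibilities since $\overrightarrow{G}$ is simple). For any two distinct leaves $v_1,v_2$, the induced subdigraph of $\overrightarrow{G}$ on $\{b,v_1,v_2\}$ has underlying undirected graph $P_3$ (since $v_1,v_2$ are non-adjacent in $G$), and so its isomorphism class is determined by the unordered pair of types assigned to $v_1$ and $v_2$.

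Next I would perform a direct case check matching each unordered type-pair with a digraph whose underlying graph is $P_3$: the pair $(O,O)$ gives $\overrightarrow{K_{12}}$, the pair $(I,I)$ gives $\overrightarrow{K_{21}}$, the pair $(D,O)$ gives $\overleftarrow{\overrightarrow{K_{12}}}$, the pair $(D,I)$ gives $\overleftarrow{\overrightarrow{K_{21}}}$, and the pair $(D,D)$ gives $\overleftarrow{\overrightarrow{P_3}}$; only the remaining pair $(O,I)$ yields the unforbidden $\overrightarrow{P_3}$. Thus a pair of leaves fails to produce a forbidden induced subdigraph precisely when one leaf has type $O$ and the other has type $I$ (and in particular neither has type $D$).

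To finish, I will apply pigeonhole to the three leaves $a,c,d$. If some leaf has type $D$, then pairing it with either of the other two produces a forbidden induced subdigraph on three vertices. Otherwise every leaf has type in $\{O,I\}$, and among three leaves two of them must share a type, again giving a forbidden induced subdigraph. Either way we contradict the hypothesis, so $G$ is claw-free. The only real obstacle is bookkeeping in the six-case check of the previous paragraph; once that is tabulated, the pigeonhole conclusion is immediate.
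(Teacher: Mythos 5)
Your proposal is correct and follows the same approach as the paper: the paper's proof simply asserts that any orientation of the claw must contain one of the five forbidden induced subdigraphs, and your type classification of the leaves together with the pigeonhole argument is a careful, correct filling-in of exactly that assertion.
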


\begin{proof}
Let $\overrightarrow{G}$ be a digraph whose underlying undirected graph is the claw, $K_{31}.$  It is not difficult to see that $\overrightarrow{G}$ must have an induced subdigraph isomoprhic to one of the five digraphs listed.  But this means that any digraph that contains an induced claw subgraph must contain a forbidden subdigraph.
\end{proof}

For the next lemma, we need a few definitions.  We say that a digraph, $\overrightarrow{G},$ is \textit{connected} if its underlying undirected graph, $G,$ is connected. We say that a subdigraph, $\overrightarrow{H},$ is a \textit{connected component} of $\overrightarrow{G}$ if $H$ is a connected component of $G.$   As in \hyperref[dir path ex]{Example \ref*{dir path ex}}, we say a digraph, $\overrightarrow{G}=(V,E)$ is a \textit{directed path} if its vertex set is $V = \{v_1, v_2, \cdots, v_n\}$ and its edge set is $E = \{(v_i, v_{i+1}) \mid 1 \leq i <n\}.$  As defined in \hyperref[dir cyc ex]{Example \ref*{dir cyc ex}}, we say a digraph, $\overrightarrow{G} = (V,E),$ is a \textit{directed cycle} if its vertex set is $V = \{v_1, v_2, \cdots, v_n\}$ and its edge set is $E = \{(v_i, v_{i+1}) \mid 1 \leq i <n\} \cup \{(v_n, v_1)\}.$

\begin{lemma} \label{Ka lemma}
Let $\overrightarrow{G}$ be a digraph that has no induced subdigraphs isomorphic to $\overrightarrow{K_{12}}, \overrightarrow{K_{21}},\overleftarrow{\overrightarrow{K_{12}}}, \overleftarrow{\overrightarrow{K_{21}}}$ or $\overleftarrow{\overrightarrow{P_3}}.$  Let $\kappa$ be a proper coloring of $\overrightarrow{G}$.  For $a \in \PP,$ define $\overrightarrow{G_{\kappa,a}}$ as the induced subdigraph of $\overrightarrow{G}$ of all vertices colored by $a$ or $a+1$.  Then each connected component of $\overrightarrow{G_{\kappa, a}}$ is either a directed cycle with an even number of vertices or a directed path.
\end{lemma}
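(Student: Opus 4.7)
The plan is to first pin down the shape of the underlying undirected graph of each connected component of $\overrightarrow{G_{\kappa, a}}$, and then use the forbidden-subdigraph hypothesis to upgrade that shape to the directed setting.

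First I would observe that $G_{\kappa,a}$, the underlying undirected graph of $\overrightarrow{G_{\kappa,a}}$, is bipartite with parts $\kappa^{-1}(a)$ and $\kappa^{-1}(a+1)$, since $\kappa$ is a proper coloring. By \hyperref[claw-free lemma]{Lemma \ref*{claw-free lemma}}, $G$ is claw-free, and the induced subgraph $G_{\kappa,a}$ inherits this property. In a connected bipartite claw-free graph, every vertex has degree at most $2$: three neighbors of a single vertex must all lie in the opposite color class and hence be pairwise nonadjacent, producing an induced $K_{1,3}$. So every connected component of $G_{\kappa, a}$ is a path or a cycle, and any such cycle must have even length by bipartiteness.

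Next I would analyze the orientations along each component. Pick three consecutive vertices $u, v, w$ along the path or cycle, so that $\{u,v\}, \{v,w\} \in E(G_{\kappa,a})$ but $\{u,w\} \notin E(G_{\kappa,a})$ (the latter because $v$ has degree at most $2$, and in the even-cycle case the length is at least $4$). There are three possibilities for the directed edges between $u$ and $v$ in $\overrightarrow{G}$ (only $(u,v)$, only $(v,u)$, or both) and likewise three between $v$ and $w$, giving nine configurations. Running through the list, one checks that the only two whose induced subdigraph on $\{u,v,w\}$ avoids all five forbidden patterns $\overrightarrow{K_{12}}, \overrightarrow{K_{21}}, \overleftarrow{\overrightarrow{K_{12}}}, \overleftarrow{\overrightarrow{K_{21}}}, \overleftarrow{\overrightarrow{P_3}}$ are the consistent ones: either only $(u,v)$ and $(v,w)$ are present, or only $(v,u)$ and $(w,v)$ are present. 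Consequently the orientation propagates from each edge to the next along the component, so a path-component must be a directed path and a cycle-component must be a directed cycle, yielding the claim.

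The main obstacle is the case analysis in the second step, but it is short and purely mechanical: each of the seven bad configurations matches (up to swapping $u$ and $w$) exactly one of the five forbidden digraphs on three vertices. The real structural content sits in the first step, where the bipartite-plus-claw-free classification pins down the undirected component; the forbidden-subdigraph hypothesis then does no more than enforce a local consistency of orientation, which globalizes immediately.
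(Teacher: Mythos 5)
Your proof is correct and follows essentially the same route as the paper: both arguments first use properness of the coloring (bipartiteness / absence of odd cycles) together with claw-freeness from Lemma \ref{claw-free lemma} to conclude that every component of $G_{\kappa,a}$ is a path or an even cycle, and then invoke the five forbidden subdigraphs to pin down the orientations. The only differences are that you spell out the nine-case orientation analysis on three consecutive vertices, which the paper compresses into a single sentence, and that your parenthetical justification for $\{u,w\}\notin E(G_{\kappa,a})$ should really be triangle-freeness (bipartiteness) rather than the degree bound on $v$ --- a cosmetic point, since you have already established bipartiteness.
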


\begin{proof}
Let $G_{\kappa, a}$ be the underlying undirected graph of $\overrightarrow{G_{\kappa, a}}.$  First note that $G_{\kappa, a}$ cannot have any cycles of odd length, because then two vertices with the same color would be adjacent, which contradicts the fact that $\kappa$ is a proper coloring.  

We can also see that $G_{\kappa, a}$ cannot have any vertex adjacent to more than two other vertices.  Indeed, suppose vertex $v$ were adjacent to vertices $w_1,$ $w_2,$ and $w_3$ in $G_{\kappa, a},$ as in the following figure:
 
\begin{center}\includegraphics[scale=0.4]{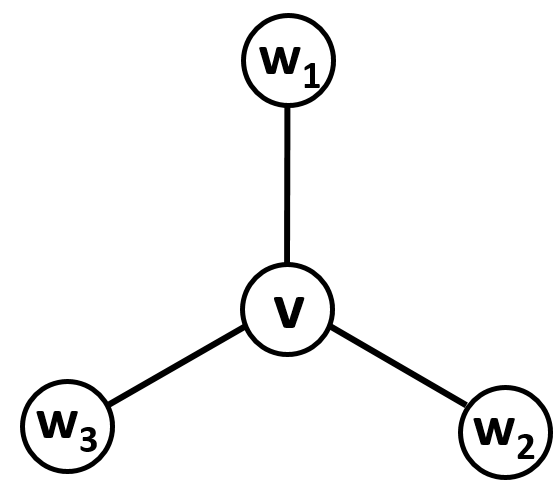}\end{center}
Since $G_{\kappa, a}$ has no 3-cycles, $w_1,$ $w_2,$ and $w_3$ have no edges between them. Then we see that $G_{\kappa, a}$ contains a claw as an induced subgraph, but this contradictions \hyperref[claw-free lemma]{Lemma \ref*{claw-free lemma}}

Then since every vertex has degree at most 2, every connected component of $G_{\kappa, a}$ must be either a path or a cycle of even length. Since every induced subdigraph of $\overrightarrow{G}$ must avoid $\overrightarrow{K_{12}},$ $\overrightarrow{K_{21}},$ $\overleftarrow{\overrightarrow{K_{12}}},$ $\overleftarrow{\overrightarrow{K_{21}}},$ and $\overleftarrow{\overrightarrow{P_3}},$ the only possible connected components are the ones listed in the lemma.
\end{proof}

\begin{thm} \label{sym thm}
Let $\overrightarrow{G}$ be a digraph that has no induced subdigraphs isomorphic to $\overrightarrow{K_{12}}, \overrightarrow{K_{21}},\overleftarrow{\overrightarrow{K_{12}}}, \overleftarrow{\overrightarrow{K_{21}}},$ or $\overleftarrow{\overrightarrow{P_3}}.$  Then $X_{\overrightarrow{G}}({\bf x},t)$ is symmetric.
\end{thm}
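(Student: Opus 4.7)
The plan is to show that $X_{\overrightarrow{G}}({\bf x},t)$ is invariant under every adjacent transposition $s_a$ (swapping $x_a$ and $x_{a+1}$), since these generate the symmetric group action on formal power series. Concretely, for each $a \in \PP$ I will construct an involution $\phi_a$ on the set $\kappa(\overrightarrow{G})$ of proper colorings such that $\phi_a(\kappa)$ has the same color multiset as $\kappa$ except with the counts of $a$ and $a+1$ interchanged, and with $\asc(\phi_a(\kappa)) = \asc(\kappa)$. This pairs up monomial--ascent contributions so that the generating function is fixed by $s_a$.

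To define $\phi_a$, I start from the hypothesis-aware decomposition provided by \hyperref[Ka lemma]{Lemma~\ref*{Ka lemma}}: the induced subdigraph $\overrightarrow{G_{\kappa,a}}$ on vertices colored $a$ or $a+1$ is a disjoint union of directed paths and directed cycles of even length. On each connected component $H$ there are exactly two proper $2$-colorings using $\{a,a+1\}$, obtained from one another by swapping the two colors. I define $\phi_a(\kappa)$ to be the coloring obtained from $\kappa$ by performing this local swap on every component of $\overrightarrow{G_{\kappa,a}}$ that is either an even directed cycle or a directed path with an \emph{odd} number of vertices, while leaving the even-length directed paths untouched. Since the swap does not alter which vertices are colored from $\{a,a+1\}$, we have $\overrightarrow{G_{\phi_a(\kappa),a}} = \overrightarrow{G_{\kappa,a}}$, so applying $\phi_a$ twice returns $\kappa$; thus $\phi_a$ is an involution.

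The verification splits into two claims. \emph{Color counts:} on an even cycle with $2m$ vertices or an even path with $2m$ vertices, each of the two alternating colorings already uses color $a$ and color $a+1$ exactly $m$ times, so flipping (or not) produces no net change in $n_a - n_{a+1}$. On an odd path with $2m+1$ vertices, one coloring uses $a$ exactly $m+1$ times and $a+1$ exactly $m$ times, and the other is the reverse; flipping every odd-path component therefore negates the total imbalance $n_a - n_{a+1}$, which is exactly what is needed to interchange the $a$- and $(a{+}1)$-counts globally. \emph{Ascents:} a direct count along a directed path $v_1 \to \cdots \to v_k$ with alternating colors shows that the two colorings contribute $m$ and $m-1$ ascents when $k=2m$ but each contribute $m$ ascents when $k=2m+1$; an even directed cycle of length $2m$ likewise gives $m$ ascents in both colorings. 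Thus swapping colors on odd paths and on even cycles preserves the ascent count on each affected component, while even-length paths are left alone. Finally, edges of $\overrightarrow{G}$ joining a component $H$ to a vertex $v$ with $\kappa(v) \notin \{a,a+1\}$ have their ascent status determined only by whether $\kappa(v) < a$ or $\kappa(v) > a+1$, never by which of $\{a,a+1\}$ the endpoint in $H$ carries, so those ascents are automatically unchanged by $\phi_a$.

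Combining these observations, $\phi_a$ is an ascent-preserving involution on $\kappa(\overrightarrow{G})$ that sends a monomial $\mathbf{x}_\kappa$ to $s_a(\mathbf{x}_\kappa)$, which proves $X_{\overrightarrow{G}}({\bf x},t) = s_a X_{\overrightarrow{G}}({\bf x},t)$ for every $a$ and hence symmetry. I expect the main obstacle to be the bookkeeping in the case analysis of paths vs.\ even cycles and of odd vs.\ even lengths --- in particular, correctly identifying that even-length paths must be fixed by $\phi_a$ (because flipping them would shift the ascent count by one) while their inherent symmetry in $\{a,a+1\}$ content means they need no flipping at all.
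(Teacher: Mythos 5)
Your proof is correct and follows essentially the same route as the paper: the same involution $\phi_a$ built on \hyperref[Ka lemma]{Lemma~\ref*{Ka lemma}}, flipping the colors $a$ and $a+1$ on the odd-vertex directed paths while fixing the even-vertex paths. The only difference is that you also flip the even directed cycles, which is an immaterial variation since those components have balanced color counts and the same number of ascents in either of their two colorings, so both choices yield a valid ascent-preserving involution.
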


\begin{proof}
By \hyperref[disj]{Proposition \ref*{disj}}, we can assume without loss of generality that $\overrightarrow{G}$ is connected.

We will construct an involution, $\phi_a$, for each $a \in \PP$ on the set of proper colorings of $\overrightarrow{G}$ that switches the number of occurrences of the color $a$ and the number of occurrences the color $a+1$, leaves the number of occurrences of all other colors the same, and does not change the number of ascents of the coloring.  This will then prove the theorem.

So let $\kappa$ be a proper coloring of $\overrightarrow{G}$ and let $\overrightarrow{G_{\kappa, a}}$ be the induced subdigraph of $\overrightarrow{G}$ containing only the vertices colored by $a$ and $a+1$.  By \hyperref[Ka lemma]{Lemma \ref*{Ka lemma}}, each component of $\overrightarrow{G_{\kappa, a}}$ is a directed path or a directed cycle of even length.

Let $\phi_a(\kappa)$ be the the coloring of $\overrightarrow{G}$ obtained from $\kappa$ by replacing each occurence of $a$ with $a+1$ and replacing each $a+1$ with $a$ in the components of $\overrightarrow{G_{\kappa, a}}$ that are paths with an odd number of vertices.  For the other components of $\overrightarrow{G_{\kappa, a}}$	(paths and cycles with an even number of vertices), the colors of $\phi_a(\kappa)$ are the same as those of $\kappa.$

Note that in a path of with an odd number of vertices in $\overrightarrow{G_{\kappa, a}},$ exactly half of the edges are ascents of $\kappa.$  Hence, if we change all $a$'s to $a+1$'s and vice versa, we will change all ascents to descents and vice versa, but the number of ascents of $\kappa$ is preserved.  It is then easy to see that $\phi_a$ is an involution that meets the desired conditions and hence the theorem is proven.

\end{proof}

\begin{lemma} \label{induced}
Circular indifference digraphs do not have any induced subdigraphs isomorphic to  $\overrightarrow{K_{12}}, \overrightarrow{K_{21}},\overleftarrow{\overrightarrow{K_{12}}}, \overleftarrow{\overrightarrow{K_{21}}},$ or $\overleftarrow{\overrightarrow{P_3}}.$
\end{lemma}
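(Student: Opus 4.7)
The plan is to exploit the fact that all five forbidden subdigraphs share the same underlying undirected graph, namely the path $P_3$ with $b$ as the middle vertex. Hence it suffices to show that whenever three vertices $a, b, c$ of a circular indifference digraph $\overrightarrow{G}$ induce a $P_3$ in the underlying undirected graph $G$ (with $b$ in the middle), the induced subdigraph on $\{a,b,c\}$ must be a directed path, i.e., isomorphic to $\overrightarrow{P_3}$ as in \hyperref[dir path ex]{Example \ref*{dir path ex}}. This rules out all five forbidden digraphs at once, since none of them is a directed path.

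The main tool is a monotonicity observation drawn directly from the definition: if $(i,j) \in E(\overrightarrow{G})$, witnessed by $[i,j] \subseteq [x,y]$ for some $[x,y] \in I$, then for any $p, q$ appearing in the clockwise order $i, \ldots, p, \ldots, q, \ldots, j$ within $[i,j]$, the sub-interval $[p,q]$ is also contained in $[x,y]$, so $(p,q) \in E(\overrightarrow{G})$.

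With this in hand, I would perform a case analysis on the cyclic order of $a, b, c$ on $[n]$, of which there are two possibilities up to interchanging $a$ and $c$. Suppose first that the clockwise order is $a, b, c$: then $c \in [b,a]$ and $a \in [c,b]$, so by monotonicity the existence of either $(b,a)$ or $(c,b)$ would force $(c,a) \in E(\overrightarrow{G})$ (via the sub-interval $[c,a]$), contradicting the assumption that $a$ and $c$ are non-adjacent in $G$. Hence neither $(b,a)$ nor $(c,b)$ is present, and since $b$ is adjacent to both $a$ and $c$ in $G$ we must have $\{(a,b), (b,c)\} \subseteq E(\overrightarrow{G})$, giving the directed path $a \to b \to c$. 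The other cyclic order (clockwise $a, c, b$) is symmetric: now $c \in [a,b]$ and $a \in [b,c]$, so the existence of $(a,b)$ or $(b,c)$ would force $(a,c) \in E(\overrightarrow{G})$, again a contradiction; hence the induced subdigraph is $\{(b,a), (c,b)\}$, which is the directed path $c \to b \to a$. The only real work is bookkeeping which of $a, b, c$ lies in each of the six circular arcs $[a,b], [b,a], [b,c], [c,b], [a,c], [c,a]$ under each cyclic ordering; once this is tabulated, each case is dispatched by a couple of applications of the monotonicity observation.
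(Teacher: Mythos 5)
Your proof is correct, and it reaches the conclusion by a genuinely different decomposition than the paper's. The paper argues one forbidden digraph at a time (detailing only $\overrightarrow{K_{12}}$ and asserting the rest are similar): given the two edges $(b,a)$ and $(b,c)$, the circular intervals $[b,a]$ and $[b,c]$ share the left endpoint $b$, hence one contains the other, and in either case the sub-interval between $a$ and $c$ lies inside an interval of $I$, forcing the forbidden adjacency. You instead prove a single sharper statement --- every triple inducing a $P_3$ in the underlying graph carries exactly the directed-path orientation --- by casing on the cyclic order of $a,b,c$ rather than on which forbidden configuration occurs. Both arguments rest on the same hereditary property of circular intervals (a sub-interval of an interval of $I$ yields an edge), but your version needs only \emph{one} ``wrong-way'' edge to force $a \sim c$, rather than the interplay of two edges with a common endpoint, and it dispatches all five forbidden subdigraphs uniformly instead of leaving four cases to the reader. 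The trade-off is that you must verify the bookkeeping of which vertex lies in which circular interval under each of the two cyclic orders, but that is routine and you have done it correctly; as a bonus your argument makes explicit why $\overrightarrow{P_3}$ itself is the unique orientation of an induced $P_3$ that can occur in a circular indifference digraph.
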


\begin{proof}
Let $\overrightarrow{G}$ be a circular indifference digraph arising from a set of circular intervals, $I,$ on $[n],$ and suppose $\overrightarrow{G}$ contains an induced subdigraph, $\overrightarrow{H}$, isomorphic to $\overrightarrow{K_{12}}.$  Suppose $\overrightarrow{H}$ has vertex set $\{a, b, c\}$ and edge set $\{(b,a), (b,c)\}.$ Then the circular intervals $[b,a]$ and $[b,c]$ are both contained in circular intervals of $I.$  But then either $[b,a] \subset [b,c]$ and hence $[a,c] \subset [b,c],$ which is contained in a circular interval of $I,$ or $[b,c] \subset [b,a]$ and hence $[c,a] \subset [b,a],$ which is contained in a circular interval of $I.$ Either way there is an edge between $a$ and $c$ in $\overrightarrow{G},$ which is a contradiction.  Similar arguments show that $\overrightarrow{G}$ cannot contain any induced subdigraphs isomorphic to $\overrightarrow{K_{21}},$ $\overleftarrow{\overrightarrow{K_{12}}}, \overleftarrow{\overrightarrow{K_{21}}},$ or $\overleftarrow{\overrightarrow{P_3}}.$
\end{proof}

\begin{cor}
Let $\overrightarrow{G}$ be a digraph such that all connected components of $\overrightarrow{G}$ are circular indifference digraphs.  Then $X_{\overrightarrow{G}}({\bf x},t)$ is symmetric.
\end{cor}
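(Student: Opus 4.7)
The plan is to assemble the corollary directly from the three tools already in hand: \hyperref[induced]{Lemma \ref*{induced}}, \hyperref[sym thm]{Theorem \ref*{sym thm}}, and \hyperref[disj]{Proposition \ref*{disj}}. The statement asks about a digraph whose components are circular indifference digraphs, so the natural strategy is to reduce to a single component and then invoke the symmetry theorem.

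First I would write $\overrightarrow{G}$ as a disjoint union $\overrightarrow{G_1} + \overrightarrow{G_2} + \cdots + \overrightarrow{G_k}$ of its connected components, where each $\overrightarrow{G_i}$ is a circular indifference digraph by assumption. Applying \hyperref[disj]{Proposition \ref*{disj}} inductively gives the factorization
\[
X_{\overrightarrow{G}}({\bf x},t) \;=\; \prod_{i=1}^{k} X_{\overrightarrow{G_i}}({\bf x},t).
\]
This reduces the problem to showing that each factor $X_{\overrightarrow{G_i}}({\bf x},t)$ lies in $\Lambda_{\Q}[t]$, since the product of symmetric functions (with coefficients in $\Q[t]$) is again symmetric.

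Next, for each component $\overrightarrow{G_i}$, I would apply \hyperref[induced]{Lemma \ref*{induced}}, which guarantees that $\overrightarrow{G_i}$ has no induced subdigraph isomorphic to any of $\overrightarrow{K_{12}}$, $\overrightarrow{K_{21}}$, $\overleftarrow{\overrightarrow{K_{12}}}$, $\overleftarrow{\overrightarrow{K_{21}}}$, or $\overleftarrow{\overrightarrow{P_3}}$. Then \hyperref[sym thm]{Theorem \ref*{sym thm}} immediately yields that $X_{\overrightarrow{G_i}}({\bf x},t)$ is symmetric.

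There is essentially no obstacle here: the corollary is a bookkeeping consequence of the previously established symmetry theorem combined with the multiplicativity of the chromatic quasisymmetric function under disjoint union. The only subtle point worth mentioning explicitly is that \hyperref[sym thm]{Theorem \ref*{sym thm}} is stated for arbitrary digraphs avoiding those five induced subdigraphs (not just connected ones), so in principle one could skip the reduction to components entirely and apply \hyperref[sym thm]{Theorem \ref*{sym thm}} to all of $\overrightarrow{G}$ at once, after verifying that the forbidden-subdigraph condition passes from each component to the whole graph (which it does, since an induced subdigraph on vertices of a single component lives inside that component). Either route completes the proof in a few lines.
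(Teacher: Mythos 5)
Your proposal is correct and matches the paper's proof, which simply combines Lemma \ref{induced} with Theorem \ref{sym thm}; your observation that the forbidden three-vertex subdigraphs are connected and hence must lie within a single component is exactly the (implicit) justification needed, and the alternative route via Proposition \ref{disj} is equally valid.
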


\begin{proof}
Combine \hyperref[induced]{Lemma \ref*{induced}} with \hyperref[sym thm]{Theorem \ref*{sym thm}}.
\end{proof}

It turns out that the class of digraphs from \hyperref[sym thm]{Theorem \ref*{sym thm}} are not the only digraphs with symmetric chromatic quasisymmetric functions.  In fact, the directed cycle with one edge directed backwards, i.e. the digraph with vertex set $V = \{v_1, v_2, \cdots, v_n\}$ and edge set $E = \{(v_i, v_{i+1})\mid 1 \leq i <n\} \cup \{(v_1, v_n)\},$ is symmetric (see \cite[Exercise 2.84]{hopf}), but it contains induced subdigraphs isomorphic to both $\overrightarrow{K_{21}}$ and $\overrightarrow{K_{12}}$. See \cite{me2} for a proof of symmetry and further results. 

\section{Expansion in the elementary symmetric function basis} \label{e-basis}

In this section, we provide some evidence for \hyperref[e-pos conj]{Conjecture \ref*{e-pos conj}}.  Below we take a look at the simplest example of a circular indifference digraph that is not acyclic, namely the directed cycle, $\overrightarrow{C_n}$.

\begin{thm} \label{cycle exp} Let $\overrightarrow{C_n}$ be the directed cycle of length $n$.  Then
\begin{equation}\label{Cn eqn} \displaystyle \sum_{n \geq 2} X_{\overrightarrow{C_n}}({\bf x},t)z^n = \frac{t \displaystyle \sum_{k \geq 2}k[k-1]_t e_k({\bf x}) z^k}{1-t \displaystyle \sum_{k \geq 2} [k-1]_t e_k({\bf x}) z^k},
\end{equation} where $[n]_t = 1 + t + t^2 + \cdots + t^{n-1}.$
\end{thm}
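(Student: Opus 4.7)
The plan is to recognize $X_{\overrightarrow{C_n}}(\mathbf{x},t)$ as the trace of the $n$-th power of an infinite transfer matrix, and then evaluate the associated characteristic determinant in closed form. Define $T = (T_{ij})_{i,j\in\mathbb{P}}$ by $T_{ii}=0$, $T_{ij}=x_j$ if $j<i$, and $T_{ij}=t\,x_j$ if $j>i$. A proper coloring $\kappa$ of $\overrightarrow{C_n}$ is exactly a cyclic sequence $(c_1,\dots,c_n)$ with $c_i\neq c_{i+1\bmod n}$ (via $c_i:=\kappa(v_i)$), and the product $\prod_i T_{c_i,c_{i+1\bmod n}}$ contributes precisely $t^{\asc(\kappa)}\mathbf{x}_\kappa$ to the $(c_1,c_1)$-entry of $T^n$. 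Summing over the diagonal therefore yields $X_{\overrightarrow{C_n}}(\mathbf{x},t)=\operatorname{tr}(T^n)$.

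The next step invokes the standard formal-power-series identity
\[
\sum_{n\geq 1}\operatorname{tr}(T^n)\,z^n \;=\; -z\,\frac{d}{dz}\log\det(I-zT).
\]
Since $\operatorname{tr}(T)=0$, the left-hand side is $\sum_{n\geq 2} X_{\overrightarrow{C_n}}(\mathbf{x},t)\,z^n$. Setting $D(z):=1-t\sum_{k\geq 2}[k-1]_t e_k(\mathbf{x})\,z^k$, a direct differentiation shows that $-zD'(z)/D(z)$ equals the right-hand side of the theorem. Hence the theorem reduces to proving $\det(I-zT)=D(z)$.

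To compute the determinant I would use the principal-minor expansion $\det(I-zT)=\sum_{k\geq 0}(-z)^k\sum_{|S|=k}\det T[S]$. Because $T_{ii}=0$, only derangements of $S$ contribute to $\det T[S]$; and since $T_{ij}$ depends on $\{i,j\}$ only through its order-type, one finds
\[
\det T[S] \;=\; \Big(\prod_{s\in S}x_s\Big)\sum_{\pi\in\operatorname{Der}_k}\sgn(\pi)\,t^{\exc(\pi)},
\]
where $\operatorname{Der}_k$ is the set of derangements of $[k]$. Summing over all $S$ of size $k$ converts $\prod x_s$ into $e_k(\mathbf{x})$, so what remains is the combinatorial identity
\[
\sum_{\pi\in\operatorname{Der}_k}\sgn(\pi)\,t^{\exc(\pi)} \;=\; (-1)^{k-1}\,t[k-1]_t \qquad (k\geq 2).
\]
I would derive this by inclusion-exclusion over the fixed-point set from the classical evaluation $\sum_{\pi\in\mathfrak{S}_k}\sgn(\pi)\,t^{\exc(\pi)}=(1-t)^{k-1}$, itself provable by a short induction or by a sign-reversing involution pairing each non-identity $\pi$ with its product by a suitable adjacent transposition. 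Substituting back, the signs $(-1)^{k-1}$ combine with $(-z)^k$ to produce $\det(I-zT)=1-t\sum_{k\geq 2}[k-1]_t e_k z^k=D(z)$, as desired.

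The main obstacle is the signed-excedance identity on derangements: while the inclusion-exclusion reduction to $\mathfrak{S}_k$ is routine bookkeeping, verifying that the resulting alternating binomial sum collapses precisely to $(-1)^{k-1}t[k-1]_t$ is the combinatorial core of the argument and is where the key cancellation takes place.
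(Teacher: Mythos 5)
Your proposal is correct and follows essentially the same route as the paper: the transfer-matrix method with the same weighted adjacency matrix, the principal-minor expansion of $\det(I-zT)$ reducing to derangements and producing $e_k$, and the signed-excedance identity $\sum_{\pi}\sgn(\pi)t^{\exc(\pi)}=(-1)^{k+1}t[k-1]_t$ over derangements of $[k]$ (which the paper simply cites from Mantaci--Rakotondrajao rather than deriving by inclusion--exclusion as you propose). The only cosmetic difference is that the paper works with $n$ colors and lets $n\to\infty$ instead of manipulating an infinite matrix directly.
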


\begin{proof}
This proof is based on the proof of Proposition 5.4 in \cite{CSF}, which uses the Transfer-Matrix Method \cite[Section 4.7]{EC1}.  Let us first look at all proper colorings of all $\overrightarrow{C_d}$ for $d \geq 2$ using only $n$ colors.  We will view these colorings as closed walks of length $d$ on the colors $[n]$.  Each time we take a step, the colors either increase (and we need to count an ascent) or decrease.  Let $\overrightarrow{G} = ([n],E)$ be the digraph with $E = \{(i,j) \mid i \neq j\}.$  Let us label each edge $(i,j)$ with $tx_i$ if $i<j$  or with $x_i$ if $i>j.$  The weighted adjacency matrix of $\overrightarrow{G}$ is given by
$$ A = \begin{bmatrix}
    0 & tx_{2} & tx_{3} & \dots  & tx_{n} \\
    x_{1} & 0 & tx_{3} & \dots  & tx_{n} \\
    x_{1} & x_{2} & 0 & \dots & tx_{n}\\
    \vdots & \vdots & \vdots & \ddots & \vdots \\ 
    x_{1} & x_{2} & x_{3} & \dots  & 0
\end{bmatrix}.$$

Let $Q(z) = \dtr(I - zA)$.  By \cite[Corollary 4.7.3]{EC1}, we know that $$\displaystyle \sum_{d \geq 2} X_{\overrightarrow{C}_d}({\bf x},t)|_{x_1, x_2,...x_n}z^d = \frac{-zQ'(z)}{Q(z)}.$$  So we need to compute $$Q(z)= \dtr(I - zA) =
 \dtr\begin{bmatrix}
    1 & -tx_{2}z & -tx_{3}z & \dots  & -tx_{n}z \\
    -x_{1}z & 1 & -tx_{3}z & \dots  & -tx_{n}z \\
    -x_{1}z & -x_{2}z & 1 & \dots & -tx_{n}z\\
    \vdots & \vdots & \vdots & \ddots & \vdots \\
    -x_{1}z & -x_{2}z & -x_{3}z & \dots  & 1
\end{bmatrix}.$$

First let us describe the notation we will use. For $\sigma \in \mathfrak{S}_n,$ $\FIX(\sigma) = \{i \mid \sigma(i) = i\},$ $\exc(\sigma) = |\{i \mid \sigma(i)>i\}|,$ and $\sgn(\sigma)$ is the usual sign function on permutations.  We say $\sigma$ is a derangement if $\FIX(\sigma) = \emptyset.$  Also for any $S \subseteq [n],$ define ${\bf x}_S = \prod_{i \in S} x_i.$

Then 
\begin{align*}
Q(z) &= \displaystyle \sum_{\sigma \in \S_n} \sgn(\sigma)(-1)^{n-|\FIX(\sigma)|} z^{n-|\FIX(\sigma)|} t^{\exc(\sigma)} {\bf x}_{[n]\backslash \FIX(\sigma)} \\
&= 1+ \displaystyle \sum_{k=1}^n (-1)^k z^k \displaystyle \sum_{\substack{S \subseteq [n] \\ |S| = k}} {\bf x}_S \displaystyle \sum_{\substack{\sigma \in \S_n \\ \FIX(\sigma) = [n]\backslash S}} \sgn(\sigma) t^{\exc(\sigma)}\\
&= 1 + \displaystyle \sum_{k=1}^n (-1)^k z^k \displaystyle \sum_{\substack{S \subseteq [n] \\ |S| = k}} {\bf x}_S \displaystyle \sum_{\substack{\sigma \in \S_k \\ \sigma \ \rm is \ \rm a \ \rm derangement}} \sgn(\sigma) t^{\exc(\sigma)}\\
&= 1 + \displaystyle \sum_{k=1}^n (-1)^k z^k e_k(x_1, x_2,...,x_n) \displaystyle \sum_{\substack{\sigma \in \S_k \\ \sigma \ \rm is \ \rm a \ \rm derangement}} \sgn(\sigma) t^{\exc(\sigma)}.
\end{align*}

By \cite[Corollary 5.11]{MR}, we have that $$\displaystyle \sum_{\substack{\sigma \in \S_k \\ \sigma\ \rm is\ a\ derangement}} \sgn(\sigma) t^{\exc(\sigma)} = (-1)^{k+1}t[k-1]_t.$$

Hence
\begin{align*}
Q(z) &= 1 + \displaystyle \sum_{k=1}^n (-1)^k z^k e_k(x_1, x_2,...,x_n) (-1)^{k+1}t[k-1]_t \\
&= 1 - t \displaystyle \sum_{k \geq 2} e_k(x_1, x_2,...,x_n)[k-1]_tz^k.
\end{align*}

Then we have
\begin{align*}
-zQ'(z) &= -z(-t \displaystyle \sum_{k \geq 2} e_k(x_1, x_2,...,x_n)k[k-1]_t z^{k-1})\\
 &= t \displaystyle \sum_{k \geq 2} e_k(x_1, x_2,...,x_n)k[k-1]_t z^{k}.
\end{align*}

Letting $n$ go to infinity gives us our result.
\end{proof}

\begin{cor} \label{e-uni}
For each $n \geq 2,$ \begin{equation}\label{Cn form} X_{\overrightarrow{C_n}}({\bf x},t) = \sum_{\lambda \vdash n} e_{\lambda}({\bf x}) \sum_{\mu: \lambda(\mu) = \lambda} \mu_1 t[\mu_1-1]_tt[\mu_2-1]_t\cdots t[\mu_{l(\lambda)}-1]_t,
\end{equation}
where $\mu = (\mu_1, \mu_2, \cdots, \mu_{l(\lambda)})$ is a composition of $n,$ $l(\lambda)$ is the length of $\lambda,$ and $\lambda(\mu) = \lambda$ means that when the parts of $\mu$ are written in descreasing order, we get the partition $\lambda.$

Consequently, $X_{\overrightarrow{C_n}}({\bf x},t)$ is a palindromic, $e$-positive and $e$-unimodal polynomial in $t$.
\end{cor}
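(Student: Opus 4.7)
My plan is to extract the coefficient of $z^n$ from the generating function identity of \hyperref[cycle exp]{Theorem \ref*{cycle exp}} by expanding the denominator as a geometric series. Writing the right-hand side as $B(z)/(1-A(z))$ with $A(z) = t\sum_{k\geq 2}[k-1]_t e_k({\bf x}) z^k$ and $B(z) = t\sum_{k\geq 2} k [k-1]_t e_k({\bf x}) z^k,$ the identity becomes
$$\sum_{n \geq 2} X_{\overrightarrow{C_n}}({\bf x},t) z^n \;=\; \sum_{\ell \geq 1} B(z) A(z)^{\ell-1}.$$
Each summand on the right contributes to the coefficient of $z^n$ through compositions $\mu = (\mu_1, \dots, \mu_\ell)$ of $n$ with every part at least $2,$ where $\mu_1$ records the power of $z$ selected from $B(z)$ and $\mu_2, \dots, \mu_\ell$ record the powers selected from the $\ell - 1$ factors of $A(z).$

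Reading off coefficients, the composition $\mu$ contributes
$$\mu_1 \cdot t[\mu_1 - 1]_t \cdot \prod_{i = 2}^{\ell} t[\mu_i - 1]_t \cdot e_{\mu_1}({\bf x}) e_{\mu_2}({\bf x}) \cdots e_{\mu_\ell}({\bf x}).$$
Since $e_{\mu_1} \cdots e_{\mu_\ell} = e_{\lambda(\mu)}({\bf x}),$ collecting compositions by their underlying partition $\lambda = \lambda(\mu)$ yields the formula (\ref{Cn form}). Compositions containing a part equal to $1$ contribute zero because $t[0]_t = 0,$ so it is harmless that (\ref{Cn form}) is written as a sum over all partitions of $n$ rather than only those with every part at least $2.$

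For the remaining assertions I would argue from the shape of these coefficients. Each factor $t[m-1]_t = t + t^2 + \cdots + t^{m-1}$ is a palindromic and unimodal polynomial in $t$ with nonnegative integer coefficients, and its center of symmetry is $m/2.$ Using the standard fact that the product of palindromic unimodal polynomials with nonnegative coefficients is again palindromic and unimodal, with center equal to the sum of the individual centers, the product $\mu_1 \cdot t[\mu_1 - 1]_t \cdots t[\mu_{l(\lambda)} - 1]_t$ is palindromic and unimodal with center $\tfrac{1}{2} \sum_i \mu_i = n/2 = |E(\overrightarrow{C_n})|/2.$ Since a sum of palindromic unimodal polynomials sharing a common center preserves both properties, the coefficient of each $e_\lambda({\bf x})$ in (\ref{Cn form}) is a palindromic, unimodal polynomial in $t$ with nonnegative coefficients, and hence so is $X_{\overrightarrow{C_n}}({\bf x},t).$ Palindromicity also follows independently from \hyperref[palindromic]{Proposition \ref*{palindromic}} since $X_{\overrightarrow{C_n}}$ is symmetric, providing a useful cross-check. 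The main (and essentially only) technical obstacle will be the careful bookkeeping in the coefficient extraction, particularly the passage from compositions back to partitions; the palindromicity and unimodality arguments are standard once the explicit $e$-expansion is in hand.
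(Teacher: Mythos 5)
Your proposal is correct and follows essentially the same route as the paper: expanding the denominator of the generating function of Theorem \ref{cycle exp} as a geometric series, reading off the coefficient of $z^n$ via compositions, and then invoking the standard fact that products and same-center sums of positive palindromic unimodal polynomials remain positive, palindromic, and unimodal with center $n/2$. Your explicit remark that compositions containing a part equal to $1$ contribute zero because $t[0]_t=0$ is a small bookkeeping point the paper leaves implicit, but the argument is the same.
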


\begin{proof} 
We can rewrite \hyperref[Cn eqn]{(\ref*{Cn eqn})} as
$$\displaystyle \sum_{n \geq 2} X_{\overrightarrow{C_n}}({\bf x},t)z^n = (t \displaystyle \sum_{k \geq 2}k[k-1]_t e_k z^k)(\displaystyle \sum_{m \geq 0}(t \displaystyle \sum_{l \geq 2} [l-1]_t e_l z^l)^m),$$
and from here we can see \hyperref[Cn form]{(\ref*{Cn form})}.

From \hyperref[Cn form]{(\ref*{Cn form})}, we see that the coefficient of each $e_\lambda$ is a sum of products of positive, palindromic, unimodal polynomials in $t$ with centers of symmetry $\frac{\lambda_i}{2}$ for each $i.$  By \cite[Proposition 1]{Stanlog}, we know that any product of these polynomials will result in another positive, palindromic, unimodal polynomial in $t$ with center of symmetry equal to the sum of the centers of symmetry of each polynomial in the product.  So once we multiply this out, each term in the sum will be a positive, palindromic, unimodal polynomial in $t$ with center of symmetry $\frac{n}{2}$ (see \cite[Proposition B.3]{CQSF}). Since the sum of positive, palindromic, unimodal polynomials in $t$ with center of symmetry $\frac{n}{2}$ is a positive, palindromic, unimodal polynomial in $t$ with center of symmetry $\frac{n}{2},$ we have our result. 
\end{proof}

\begin{prop}\label{Kn prop}
Let $\overrightarrow{G} = ([n],E)$ be a digraph whose underlying undirected graph, $G$, is the complete graph, $K_n$.  Then $$X_{\overrightarrow{G}}({\bf x},t) = p(t)e_n({\bf x}),$$ where $$p(t) = \sum_{\sigma \in \mathfrak{S}_n} t^{inv_{\overrightarrow{G}}(\sigma)}.$$  As a result, $X_{\overrightarrow{G}}({\bf x},t)$ is symmetric and $e$-positive.
\end{prop}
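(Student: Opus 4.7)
The plan is to exploit the fact that every proper coloring of $\overrightarrow{G}$ must assign $n$ distinct colors to the $n$ vertices, since the underlying graph is $K_n$. Thus a proper coloring $\kappa$ is just an injection $[n] \to \PP$, and it is determined by two independent pieces of data: the $n$-element set $S = \kappa([n]) \subseteq \PP$ of colors used, and the way those $n$ colors are matched up with the $n$ vertices. The latter data can be encoded as a permutation in $\mathfrak{S}_n$. The goal is to set up this encoding so that the weight $t^{\asc(\kappa)} {\bf x}_\kappa$ factors as a function of $S$ times a function of $\sigma$, yielding the claimed product $e_n({\bf x}) \, p(t)$.

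Concretely, I would define the bijection
\[
\kappa \;\longleftrightarrow\; (S_\kappa,\, \sigma_\kappa),
\]
where $S_\kappa = \kappa([n])$ and $\sigma_\kappa \in \mathfrak{S}_n$ is the permutation whose one-line notation lists the vertices of $\overrightarrow{G}$ in \emph{decreasing} order of the color they receive, so that $\kappa(v) < \kappa(w)$ if and only if $\sigma_\kappa^{-1}(v) > \sigma_\kappa^{-1}(w)$. Two observations then make everything fall out. First, ${\bf x}_\kappa = \prod_{c \in S_\kappa} x_c$, which depends only on $S_\kappa$; summing over all $n$-element $S \subseteq \PP$ gives exactly $e_n({\bf x})$. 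Second, a directed edge $(u,v) \in E$ is an ascent of $\kappa$ iff $\kappa(u) < \kappa(v)$ iff $\sigma_\kappa^{-1}(u) > \sigma_\kappa^{-1}(v)$ iff $(u,v)$ is a $\overrightarrow{G}$-inversion of $\sigma_\kappa$ in the sense defined in Section~\ref{F-basis}. Therefore $\asc(\kappa) = \inv_{\overrightarrow{G}}(\sigma_\kappa)$, which depends only on $\sigma_\kappa$.

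Combining these, and noting that the bijection $\kappa \leftrightarrow (S_\kappa,\sigma_\kappa)$ is onto pairs $(S,\sigma)$ with $|S|=n$ and $\sigma \in \mathfrak{S}_n$, I would write
\[
X_{\overrightarrow{G}}({\bf x},t) \;=\; \sum_{\kappa} t^{\asc(\kappa)}{\bf x}_\kappa \;=\; \Bigl(\sum_{\substack{S \subseteq \PP \\ |S|=n}} \prod_{c \in S} x_c\Bigr)\Bigl(\sum_{\sigma \in \mathfrak{S}_n} t^{\inv_{\overrightarrow{G}}(\sigma)}\Bigr) \;=\; e_n({\bf x})\, p(t),
\]
which is the required formula. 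The consequences are then immediate: $e_n$ is symmetric and $p(t) \in \N[t]$, so $X_{\overrightarrow{G}}({\bf x},t)$ is symmetric and $e$-positive.

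There is no real obstacle here; the only place one has to be careful is the convention in the definition of $\inv_{\overrightarrow{G}}(\sigma)$. Since the paper's convention for $\overrightarrow{G}$-inversions counts edges $(u,v)$ with $\sigma^{-1}(u) > \sigma^{-1}(v)$, the permutation $\sigma_\kappa$ must list vertices in \emph{decreasing} color order (not increasing), and this is the one point I would flag explicitly in writing out the argument.
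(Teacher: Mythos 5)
Your proof is correct, but it takes a different route from the paper's. The paper proves this proposition as a quick corollary of its $F$-basis expansion (Theorem~\ref{F-basis thm}): since every vertex of $K_n$ is adjacent to every other vertex, $\rank_{(G,\sigma)}(\sigma_i)=i$ for all $i$, so every $\sigma\in\mathfrak{S}_n$ has $\DES_G(\sigma)=\emptyset$; the expansion collapses to $\omega X_{\overrightarrow{G}}({\bf x},t)=\bigl(\sum_\sigma t^{\inv_{\overrightarrow{G}}(\sigma)}\bigr)F_{n,\emptyset}$, and applying $\omega$ (using $\omega F_{n,\emptyset}=e_n$) gives the result. You instead argue directly from the definition: a proper coloring of a tournament-like digraph on $K_n$ is an injection, which you factor into its image $S$ (contributing $e_n$) and the permutation recording the decreasing color order (contributing $t^{\inv_{\overrightarrow{G}}(\sigma)}$, after correctly matching the ascent condition $\kappa(u)<\kappa(v)$ with the $\overrightarrow{G}$-inversion condition that $v$ precedes $u$). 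Your argument is self-contained and more elementary — it needs none of the $P$-partition machinery behind Theorem~\ref{F-basis thm} — and it makes the identity $\asc(\kappa)=\inv_{\overrightarrow{G}}(\sigma_\kappa)$ transparent; the paper's derivation is shorter given the machinery already in place and situates the proposition as an instance of the general $F$-expansion. Your care about the decreasing-order convention is exactly the right point to flag, and your bijection handles double edges correctly since the comparison is made edge by edge.
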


\begin{proof}
Since every vertex of $\overrightarrow{G}$ is adjacent to every other vertex, we can see that for every $\sigma = \sigma_1 \sigma_2 \cdots \sigma_n \in \mathfrak{S}_n$ we have $\rank_{(G, \sigma)}(\sigma_i) = i$ for each $i \in [n],$ so $\sigma$ contains no $G$-descents.  Taking $\omega$ of both sides of our $F$-basis expansion from \hyperref[F-basis thm]{Theorem \ref*{F-basis thm}} and the fact that $\omega F_{n, \emptyset}({\bf x}) = e_n({\bf x})$ gives us our result. 
\end{proof}

In \cite{CQSF}, Shareshian and Wachs introduce a class of graphs, which they call $G_{n,r}$ graphs, where $n \in \PP$ and $1 \leq r \leq n$.  The vertices of $G_{n,r}$ are labeled by $[n]$ and for $1 \leq i < j \leq n$ there is an edge between $i$ and $j$ if $0 < j - i < r.$  For example, $G_{n,1}$ is the graph on $n$ vertices with no edges, $G_{n,2}$ is the labeled path on $n$ elements, and $G_{n, n}$ is the complete graph on $n$ elements.  Shareshian and Wachs proved that their $e$-positivity conjecture holds for all $G_{n,r}$ when $r = 1, 2, n-2, n-1, n$ and they tested by computer all $G_{n,r}$ for $n \leq 8.$  Hence if these graphs are turned into digraphs by orienting their edges from smaller label to larger label, our $e$-positivity conjecture holds for the same graphs.

We present a circular analog of these graphs, which we will call $\overrightarrow{G}^c_{n,r},$ where $n \in \PP$ and $1 \leq r \leq n.$  We define $\overrightarrow{G}^c_{n,r} = ([n],E)$, where $E = \{(i,j) \mid 0< j-i \pmod n < r\}.$  In other words, $\overrightarrow{G}^c_{n,r}$ is the circular indifference digraph on $[n]$ arising from the set of circular intervals $$I = \{[i,i+r-1] \mid 1 \leq i \leq n-r+1\} \cup \{[i, i+r-1-n] \mid n-r+2 \leq i \leq n\}.$$  For example, $\overrightarrow{G}^c_{n,1} = ([n], \emptyset),$ $\overrightarrow{G}^c_{n,2}$ is the directed cycle, $\overrightarrow{C_n},$ and $\overrightarrow{G}^c_{n,n} = ([n],E),$ where $E = \{(i,j) \mid i \neq j\}$. \hyperref[e-uni]{Corollary \ref*{e-uni}} proves that our $e$-positivity conjecture (\hyperref[e-pos conj]{Conjecture \ref*{e-pos conj}}) holds for $\overrightarrow{G}^c_{n,2}$.  It is easy to see that our $e$-positivity conjecture holds for $\overrightarrow{G}^c_{n,1}.$  Below we show that our conjecture holds for $\overrightarrow{G}^c_{n,r}$ when $r = n-1, n.$ We used a computer to test our conjecture for all other $\overrightarrow{G}^c_{n,r}$ for $n \leq 8.$

\begin{thm}\label{Gnn thm}
For all $n \geq 1$ we have \begin{equation}\label{Gnn} X_{\overrightarrow{G}^c_{n,n}}({\bf x},t) = n! e_n({\bf x}) t^{(\substack{{n} \\ {2}})},
\end{equation} and \begin{equation}\label{Gnn-1}X_{\overrightarrow{G}^c_{n,n-1}}({\bf x},t) = n e_n({\bf x}) t^{(\substack{{n} \\ {2}})-n+1} A_{n-1}(t).
\end{equation}
\end{thm}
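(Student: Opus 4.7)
The plan is to apply Proposition \ref{Kn prop}. Both $\overrightarrow{G}^c_{n,n}$ and $\overrightarrow{G}^c_{n,n-1}$ have underlying undirected graph $K_n$, so that proposition immediately gives $X_{\overrightarrow{G}}({\bf x},t) = p_{\overrightarrow{G}}(t)\, e_n({\bf x})$ with $p_{\overrightarrow{G}}(t) = \sum_{\sigma \in \mathfrak{S}_n} t^{\inv_{\overrightarrow{G}}(\sigma)}$, reducing each case to a computation of the $\overrightarrow{G}$-inversion generating polynomial. For \ref{Gnn}: every ordered pair $(i,j)$ with $i \neq j$ is an edge of $\overrightarrow{G}^c_{n,n}$, so for each unordered pair $\{i,j\}$ exactly one of the two directed edges between them is a $\overrightarrow{G}$-inversion of any fixed $\sigma$; hence $\inv_{\overrightarrow{G}^c_{n,n}}(\sigma) = \binom{n}{2}$ identically, and $p_{\overrightarrow{G}^c_{n,n}}(t) = n!\, t^{\binom{n}{2}}$, yielding \ref{Gnn}.

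For \ref{Gnn-1}, I will pass from $\overrightarrow{G}^c_{n,n}$ to $\overrightarrow{G}^c_{n,n-1}$ by deleting the $n$ edges $(i,\, i-1 \bmod n)$, which are precisely the reverses of the arcs of $\overrightarrow{C_n}$. For each cyclic pair $\{j,\, j+1 \bmod n\}$, the lone surviving edge $(j,\, j+1 \bmod n)$ is a $\overrightarrow{G}$-inversion of $\sigma$ iff $j+1 \bmod n$ precedes $j$ in $\sigma$, whereas the other $\binom{n}{2} - n$ pairs still contribute one inversion each. Letting $a(\sigma)$ denote the number of cyclic pairs with $j+1 \bmod n$ preceding $j$ in $\sigma$, this gives
\[
\inv_{\overrightarrow{G}^c_{n,n-1}}(\sigma) \;=\; \binom{n}{2} - n + a(\sigma),
\]
and the goal becomes to show $\sum_{\sigma \in \mathfrak{S}_n} t^{a(\sigma)} = n\, t\, A_{n-1}(t)$.

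The key step is a cyclic symmetry argument. The statistic $a$ is invariant under the free $\mathbb{Z}/n$-action on $\mathfrak{S}_n$ given by the label rotation $\sigma_i \mapsto \sigma_i + 1 \pmod n$, since this action permutes the set of cyclic pairs while preserving all relative orders. Each orbit has a unique representative with $\sigma_1 = 1$, so $\sum_{\sigma \in \mathfrak{S}_n} t^{a(\sigma)} = n \sum_{\sigma : \sigma_1 = 1} t^{a(\sigma)}$. For such a $\sigma$, the cyclic pair $(1,2)$ contributes $0$ to $a$ and the pair $(n,1)$ contributes $1$ (in both cases because $1$ sits in position $1$), while the remaining $n-2$ cyclic pairs $(j,\, j+1)$ with $2 \leq j \leq n-1$ are determined entirely by the suffix $\rho := \sigma_2 \cdots \sigma_n$, a permutation of $\{2,\ldots,n\}$. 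Shifting $\rho$ down by $1$ to obtain $\tilde{\rho} \in \mathfrak{S}_{n-1}$, each such cyclic pair's contribution becomes exactly the indicator of a descent of $\tilde{\rho}^{-1}$ at the corresponding position, so $a(\sigma) = 1 + \des(\tilde{\rho}^{-1})$ and
\[
\sum_{\sigma : \sigma_1 = 1} t^{a(\sigma)} \;=\; t \sum_{\tilde{\rho} \in \mathfrak{S}_{n-1}} t^{\des(\tilde{\rho}^{-1})} \;=\; t\, A_{n-1}(t),
\]
where the last step uses the bijection $\tilde{\rho} \leftrightarrow \tilde{\rho}^{-1}$ together with the standard identity $A_{n-1}(t) = \sum_\tau t^{\des(\tau)}$. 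Multiplying by $t^{\binom{n}{2}-n}$ produces \ref{Gnn-1}. The main obstacle is really just the careful bookkeeping of how $a(\sigma)$ splits once $\sigma_1 = 1$ is imposed, and the observation that the resulting cyclic-order statistic on the suffix is an ordinary descent statistic on an inverse permutation in $\mathfrak{S}_{n-1}$; the rest is routine.
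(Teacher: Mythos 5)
Your proof is correct. The first formula and the overall reduction for the second are exactly the paper's: apply Proposition \ref{Kn prop}, observe that each of the $\binom{n}{2}-n$ two-way (``interior'') pairs of $\overrightarrow{G}^c_{n,n-1}$ contributes exactly one $\overrightarrow{G}$-inversion to every $\sigma$, and reduce to computing $\sum_{\sigma\in\mathfrak{S}_n}t^{\inv_{\overrightarrow{C_n}}(\sigma)}$ for the surviving cycle of one-way edges. Where you diverge is in evaluating that last sum: the paper simply cites its Theorem \ref{Cn p thm} with $\lambda=1^n$ (whose proof runs through the $V$-digraph bijection and the excedance identity of Lemma \ref{eul lem}), whereas you give a self-contained argument — quotient by the free label-rotation action $\sigma_i\mapsto\sigma_i+1\pmod n$, which visibly preserves the statistic, then identify the statistic on the $\sigma_1=1$ representatives with $1+\des(\tilde\rho^{-1})$ for $\tilde\rho\in\mathfrak{S}_{n-1}$. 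Your route is more elementary and makes the factor $n\,tA_{n-1}(t)$ transparent without invoking the $p$-expansion machinery; the paper's route costs nothing extra since Theorem \ref{Cn p thm} is already established there. One shared caveat (inherited from the theorem statement, not a defect of your argument): formula \ref{Gnn-1} really requires $n\geq 3$, since for $n=2$ the graph $\overrightarrow{G}^c_{2,1}$ is edgeless and its underlying graph is not $K_2$, so Proposition \ref{Kn prop} does not apply and the stated formula fails.
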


\begin{proof}
First we will prove \hyperref[Gnn]{(\ref*{Gnn})}. From \hyperref[Kn prop]{Proposition \ref*{Kn prop}}, we see that $$X_{\overrightarrow{G}^c_{n,n}}({\bf x},t) = e_n({\bf x}) \sum_{\sigma \in \mathfrak{S}_n} t^{\inv_{\overrightarrow{G}^c_{n,n}}(\sigma)}.$$  But since every pair $(i,j)$ is an edge, we see that for every $\sigma \in \mathfrak{S}_n$, $\inv_{\overrightarrow{G}^c_{n,n}}(\sigma) = (\substack{{n}\\{2}}$).  Combining this with the fact that $|\mathfrak{S}_n| = n!,$ we have our formula for $X_{\overrightarrow{G}^c_{n,n}}({\bf x},t).$

Now let us prove \hyperref[Gnn-1]{(\ref*{Gnn-1})}. The graph $\overrightarrow{G}^c_{n,n-1} = ([n],E)$ has edge set $E = \{(i,j) \mid i-j \neq 0, 1, 1-n\}.$   The set, $E$, can be divided into two types: the exterior edges, $$\{(1,2), (2,3), \cdots, (n-1, n), (n,1)\},$$ which form the directed cycle, $\overrightarrow{C_n},$ and the interior edges, which are the remaining edges.  Note that the interior edges are two-way edges; that is, if $(a,b)$ is an interior edge, then so is $(b,a)$.  Below is $\overrightarrow{G}^c_{4,3}$ where the exterior edges are solid black arrows and the interior edges are dotted red arrows.

\begin{center}
\includegraphics[scale=0.5]{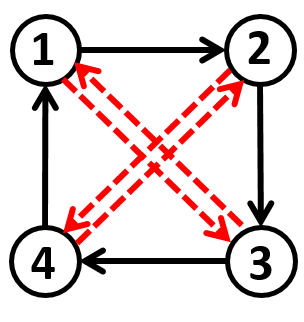}
\end{center}

Again we will use \hyperref[Kn prop]{Proposition \ref*{Kn prop}}, so we know 
$$X_{\overrightarrow{G}^c_{n,n-1}}({\bf x},t) = e_n({\bf x}) \sum_{\sigma \in \mathfrak{S}_n} t^{\inv_{\overrightarrow{G}^c_{n,n-1}}(\sigma)}.$$  Notice that for each $\sigma \in \mathfrak{S}_n,$ we have $(\substack{{n}\\{2}}) - n$ ${\overrightarrow{G}^c_{n,n-1}}$-inversions coming from the interior edges.  In order to count the ${\overrightarrow{G}^c_{n,n-1}}$-inversions from the exterior edges, recall that the exterior edges form the directed cycle, $\overrightarrow{C_n}$, so we need to find $\sum_{\sigma \in \mathfrak{S}_n} t^{\inv_{\overrightarrow{C_n}}(\sigma)}.$  Setting $\lambda = 1^n$ in \hyperref[p form Cn]{(\ref*{p form Cn})} gives us that $\sum_{\sigma \in \mathfrak{S}_n} t^{\inv_{\overrightarrow{C_n}}(\sigma)} = ntA_{n-1}(t).$  Combining all this gives us \hyperref[Gnn-1]{(\ref*{Gnn-1})}.
\end{proof}

The following theorem can be easily proven using the same proof technique as Stanley used to prove \cite[Theorem 3.3]{CSF}.

\begin{thm} \label{e sum}
Let $\overrightarrow{G}$ be a digraph on $n$ vertices such that $X_{\overrightarrow{G}}({\bf x},t)$ is symmetric.  Suppose we have the expansion $X_{\overrightarrow{G}}({\bf x},t) = \displaystyle \sum_{\lambda \vdash n} c_{\lambda}(t)e_{\lambda}({\bf x}).$  Then 
\begin{equation} \label{e sum eqn} \displaystyle \sum_{\substack{\lambda \vdash n \\ l(\lambda) = k}} c_{\lambda}(t) = \displaystyle \sum_{G_{\bar{a}} \in AO_k(G)} t^{\asc_{\overrightarrow{G}}(G_{\bar{a}})},
\end{equation} where $G$ is the underlying undirected graph of $\overrightarrow{G}$, $AO_k(G)$ is the set of acyclic orientations of $G$ with exactly $k$ sinks and $\asc_{\overrightarrow{G}}(G_{\bar{a}})$ is the number of directed edges of $\overrightarrow{G}$ that are oriented as in $G_{\bar{a}}.$
\end{thm}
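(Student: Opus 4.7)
The plan is to mimic Stanley's proof of \cite[Theorem 3.3]{CSF}, carrying the variable $t$ along as an inert weight attached to acyclic orientations. I would begin by invoking the acyclic-orientation decomposition derived in the proof of \hyperref[F-basis thm]{Theorem \ref*{F-basis thm}} (equation \hyperref[Pptn eqn]{(\ref*{Pptn eqn})}):
$$
X_{\overrightarrow{G}}({\bf x}, t) \;=\; \sum_{G_{\bar{a}} \in AO(G)} t^{\asc_{\overrightarrow{G}}(G_{\bar{a}})}\, Y_{G_{\bar{a}}}({\bf x}),
$$
where $Y_{G_{\bar{a}}}({\bf x}) := \sum_{\kappa \in C(G_{\bar{a}})} {\bf x}_\kappa$ is the strict $P_{\bar{a}}$-partition generating function of the poset $P_{\bar{a}}$ associated to $G_{\bar{a}}$, whose maximal elements are precisely the sinks of $G_{\bar{a}}$.

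To extract $\sum_{\ell(\lambda) = k} c_\lambda(t)$ from the symmetric function $X_{\overrightarrow{G}}({\bf x}, t) = \sum_\lambda c_\lambda(t) e_\lambda$, I would apply the $\Q[t]$-algebra homomorphism $\phi \colon \Lambda_{\Q[t]} \to \Q[t][y]$ defined by $\phi(e_j) = y$ for every $j \ge 1$. Since $\phi(e_\lambda) = y^{\ell(\lambda)}$, the image $\phi(X_{\overrightarrow{G}}({\bf x}, t)) = \sum_\lambda c_\lambda(t)\, y^{\ell(\lambda)}$ is a polynomial in $y$ whose coefficient of $y^k$ is exactly the left-hand side of \hyperref[e sum eqn]{(\ref*{e sum eqn})}. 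The crux of the argument, following Stanley, is to independently evaluate
$$
\phi(X_{\overrightarrow{G}}({\bf x}, t)) \;=\; \sum_{G_{\bar{a}} \in AO(G)} t^{\asc_{\overrightarrow{G}}(G_{\bar{a}})}\, y^{s(G_{\bar{a}})},
$$
where $s(G_{\bar{a}})$ counts the sinks of $G_{\bar{a}}$. In the $t = 1$ case, Stanley obtains this by evaluating each $Y_{G_{\bar{a}}}$ through a suitable principal specialization using the fundamental theorem of $P$-partitions, and then performing an inclusion--exclusion over the sink set of the associated poset. Comparing coefficients of $y^k$ on both sides then yields \hyperref[e sum eqn]{(\ref*{e sum eqn})}.

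The main obstacle is that each individual $Y_{G_{\bar{a}}}({\bf x})$ is only quasisymmetric, so $\phi$ is not well-defined on the individual summands; Stanley's proof navigates around this by performing all specializations on the aggregate symmetric function $\sum_{G_{\bar{a}}} Y_{G_{\bar{a}}}({\bf x})$ rather than term by term. Because the $t$-weight $t^{\asc_{\overrightarrow{G}}(G_{\bar{a}})}$ attaches to orientations rather than to colorings, it rides along through Stanley's global computation without any essential modification, and the rest of the argument transfers verbatim to the $t$-refined setting.
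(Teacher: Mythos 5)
Your overall route is the same as the paper's: the paper gives no details for this theorem beyond the remark that it can be proven by the technique Stanley used for \cite[Theorem 3.3]{CSF}, and your plan --- decompose $X_{\overrightarrow{G}}({\bf x},t)$ over acyclic orientations as in \hyperref[Pptn eqn]{(\ref*{Pptn eqn})}, apply the algebra homomorphism $e_j\mapsto y$, and compare coefficients of $y^k$ --- is exactly that technique with the $t$-weight carried along. The starting decomposition, the identification of the sinks of $G_{\bar a}$ with the maximal elements of $P_{\bar a}$, and the reduction of \hyperref[e sum eqn]{(\ref*{e sum eqn})} to the single identity $\phi(X_{\overrightarrow{G}}({\bf x},t))=\sum_{G_{\bar a}\in AO(G)}t^{\asc_{\overrightarrow{G}}(G_{\bar a})}y^{s(G_{\bar a})}$ are all correct.

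The one step that needs repair is the last one, where your justification is internally inconsistent. You correctly observe that $\phi$ cannot be applied to the individual quasisymmetric summands $Y_{G_{\bar a}}$ and assert that Stanley therefore works ``on the aggregate symmetric function rather than term by term''; but you then conclude that the orientation-indexed weight $t^{\asc_{\overrightarrow{G}}(G_{\bar a})}$ ``rides along without any essential modification.'' These two claims do not cohere. If the evaluation were genuinely only an aggregate identity for $\sum_{\bar a}Y_{G_{\bar a}}$, the contributions of orientations carrying different powers of $t$ could mix, and you could not reinsert the weights $t^{\asc_{\overrightarrow{G}}(G_{\bar a})}$ afterwards. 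What the $t$-refinement actually requires --- and what Stanley's argument, suitably read, supplies --- is a per-orientation statement: a $\Q[t]$-linear functional $L$, defined on a space containing both $\Lambda^n_{\Q[t]}$ and each individual $Y_{G_{\bar a}}$, satisfying $L(e_\lambda)=y^{\ell(\lambda)}$ and $L(Y_{G_{\bar a}})=y^{s(G_{\bar a})}$ for every single acyclic orientation; $\Q[t]$-linearity then inserts the weights automatically. You should either exhibit such a functional (this is the content of the relevant lemma in Stanley's proof, and of the refinement already carried out by Shareshian and Wachs in the natural unit interval case in \cite{CQSF}) or at least state explicitly that Stanley's evaluation proceeds orientation by orientation, since that is precisely the property that makes the $t$-analog come for free.
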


\begin{cor}
Let $\overrightarrow{G}$ be a digraph on $n$ vertices such that $X_{\overrightarrow{G}}({\bf x},t)$ is symmetric.  Then $$c_{(n)}(t) = \sum_{G_{\bar{a}}\in AO_1(G)} t^{asc_{\overrightarrow{G}}(G_{\bar{a}})}.$$  
\end{cor}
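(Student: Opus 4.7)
The plan is to derive this corollary as an immediate specialization of Theorem \ref*{e sum}. First I would set $k = 1$ in equation \ref*{e sum eqn}. The left-hand side becomes
\[
\sum_{\substack{\lambda \vdash n \\ l(\lambda) = 1}} c_{\lambda}(t),
\]
and since the only partition of $n$ having a single part is $\lambda = (n)$, this sum collapses to the single term $c_{(n)}(t)$. The right-hand side of equation \ref*{e sum eqn}, with $k = 1$, is precisely $\sum_{G_{\bar{a}} \in AO_1(G)} t^{\asc_{\overrightarrow{G}}(G_{\bar{a}})}$, so equating the two sides gives the claimed identity.

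There is no genuine obstacle: the corollary is a direct restatement of the $k=1$ case of Theorem \ref*{e sum}. The only observation needed is the elementary fact that length-one partitions of $n$ are exhausted by $(n)$ itself, which eliminates any sum on the left-hand side. All hypotheses on $\overrightarrow{G}$ (namely, that $X_{\overrightarrow{G}}({\bf x},t)$ is symmetric, so that the $e$-expansion $X_{\overrightarrow{G}}({\bf x},t) = \sum_{\lambda \vdash n} c_{\lambda}(t) e_{\lambda}({\bf x})$ makes sense) are inherited directly from the hypotheses of Theorem \ref*{e sum}.
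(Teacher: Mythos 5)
Your proposal is correct and is exactly the argument the paper intends: the corollary is the $k=1$ specialization of Theorem \ref{e sum}, where the sum over length-one partitions collapses to the single term $c_{(n)}(t)$ because $(n)$ is the only such partition. The paper omits the proof precisely because it is this immediate.
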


For the directed path and the directed cycle, we can refine \hyperref[e sum eqn]{(\ref*{e sum eqn})} by giving a combinatorial interpretation of each $c_\lambda(t)$ in terms of acyclic orientations.  We already know that the $c_\lambda(t)$ have positive coefficients by the formula given in \cite[Theorem 7.2]{Eul} and by \hyperref[e-uni]{Corollary \ref*{e-uni}}, but perhaps these interpretations can be generalized to show $e$-positivity for a larger class of graphs.

For the next proposition, let $\overrightarrow{C_n} = ([n],E)$ denote the directed cycle, where $E = \{(i, i+1) \mid 1 \leq i <n\} \cup \{(n,1)\},$ and let $C_n$ denote its underlying undirected graph.  For an acyclic orientation of $C_n$, denoted $G_{\bar {a}},$ we say that $i$ and $j$ are \textit{consecutive sinks} of $G_{\bar{a}}$ if $i$ and $j$ are both sinks of $G_{\bar{a}}$ and there are no other sinks in the circular interval $[i, j].$

\begin{prop}\label{Cn acyc}
Let $X_{\overrightarrow{C_n}}({\bf x},t) = \displaystyle \sum_{\lambda \vdash n} c_{\lambda}(t) e_{\lambda}({\bf x}).$  Then $$c_{\lambda}(t) = \displaystyle \sum_{G_{\bar{a}} \in AO_{\lambda}(C_n)} t^{\asc_{\overrightarrow{C_n}}(G_{\bar{a}})},$$ where $AO_{\lambda}(C_n)$ is the set of all acyclic orientations $G_{\bar{a}}$ of $C_n$ such that the number of vertices between consecutive sinks of $G_{\bar{a}}$ is $\lambda_1 - 1,$ $\lambda_2 - 1,$...,$\lambda_k - 1$ in any order and $\asc_{\overrightarrow{C_n}}(G_{\bar{a}})$ is the number of directed edges of $\overrightarrow{C_n}$ that are oriented as in $G_{\bar{a}}.$
\end{prop}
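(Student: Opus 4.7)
The plan is to combine \hyperref[e-uni]{Corollary \ref*{e-uni}}, which gives the explicit formula $c_\lambda(t) = \sum_{\mu \colon \lambda(\mu) = \lambda} \mu_1 \prod_{i=1}^{l(\lambda)} t[\mu_i-1]_t$, with a direct enumeration of the acyclic orientations that appear on the right-hand side. The structural fact I will rely on is that sinks and sources alternate around any acyclic orientation of a cycle, so an orientation with $k$ sinks is specified by the positions of the $k$ sinks (which cut the cycle into $k$ arcs) together with the position of the unique source on each arc.

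First I would pin down the ascent generating function on a single arc. On an arc of edge-length $\mu$ between consecutive sinks $s$ and $s'$, with $s'$ appearing after $s$ in the orientation of $\overrightarrow{C_n}$, suppose the unique source $\sigma$ sits $q$ edges clockwise from $s$ (with $1 \leq q \leq \mu-1$). Then the $q$ edges on the $s$-side of $\sigma$ point counterclockwise in $G_{\bar a}$ and therefore oppose $\overrightarrow{C_n}$, while the $\mu - q$ edges on the $s'$-side point clockwise and agree with $\overrightarrow{C_n}$. Summing $t^{\mu-q}$ as $q$ ranges over $\{1,\dots,\mu-1\}$ gives $t[\mu-1]_t$, and factoring over the $k$ arcs yields
$$\sum_{G_{\bar a} \in AO_\lambda(C_n)} t^{\asc_{\overrightarrow{C_n}}(G_{\bar a})} = \sum_{\text{sink arrangements of type }\lambda} \prod_{i=1}^k t[\mu_i-1]_t,$$
where $(\mu_1,\dots,\mu_k)$ is the cyclic sequence of arc edge-lengths determined up to cyclic rotation by the arrangement.

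Next I would parametrize sink arrangements. Each pair consisting of a composition $\mu$ with $\lambda(\mu)=\lambda$ and a vertex $v \in [n]$ produces a sink arrangement by placing sinks at $v, v+\mu_1, v+\mu_1+\mu_2, \ldots \pmod{n}$, and each arrangement arises from exactly $k$ such pairs, one for each choice of distinguished starting sink (equivalently, each cyclic rotation of $\mu$). Since $\prod_i t[\mu_i-1]_t$ is invariant under cyclic rotation of $\mu$, this identifies
$$\sum_{\text{arrangements of type }\lambda} \prod_i t[\mu_i-1]_t = \frac{n}{k} \sum_{\mu \colon \lambda(\mu)=\lambda} \prod_i t[\mu_i-1]_t.$$

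Finally I would bridge to $c_\lambda(t)$ by a symmetry argument. For each $j \in [k]$, the cyclic shift $\mu \mapsto (\mu_j, \mu_{j+1}, \ldots, \mu_{j-1})$ is a bijection on $\{\mu \colon \lambda(\mu)=\lambda\}$ that preserves $\prod_i t[\mu_i-1]_t$, so $\sum_\mu \mu_j \prod_i t[\mu_i-1]_t$ is independent of $j$; averaging over $j$ and using $\sum_j \mu_j = n$ yields $\sum_\mu \mu_1 \prod_i t[\mu_i-1]_t = \frac{n}{k} \sum_\mu \prod_i t[\mu_i-1]_t$, which matches the previous step and hence equals $c_\lambda(t)$. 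The main obstacle is the bookkeeping: the factor $\mu_1$ in \hyperref[e-uni]{Corollary \ref*{e-uni}} and the $k$-to-$1$ overcounting in the composition-to-arrangement correspondence both look asymmetric, and reconciling them with the symmetric arc-product requires the cyclic-averaging step.
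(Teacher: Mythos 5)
Your proof is correct and follows essentially the same route as the paper: both start from Corollary \ref{e-uni} and decompose an acyclic orientation of $C_n$ into the arcs between consecutive sinks, with the position of the unique source on an arc of edge-length $\mu$ accounting for the factor $t[\mu-1]_t$ (the paper packages these arcs as ``mountains'' glued in a cycle). The only difference is bookkeeping: the paper absorbs the asymmetric factor $\mu_1$ directly into an explicit bijection, as the choice of which vertex of the first mountain receives the label $1$, whereas you reconcile it with the $k$-to-$1$ composition-to-arrangement overcount via a cyclic-averaging identity; both are valid.
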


\begin{example}
In the acyclic orientation of $C_9$ shown below, there are 3 vertices between sinks {\bf 2} and {\bf 6}, 1 vertex between sinks {\bf 6} and {\bf 8} and 2 vertices between sinks {\bf 8} and {\bf 2}, so this corresponds to $e_{432}.$  There are 3 edges that match the original cyclic orientation of $\overrightarrow{C_9},$ shown by the dotted red arrows, hence this acyclic orientation corresponds to $t^3 e_{432}.$
\begin{center}
\includegraphics[scale=0.5]{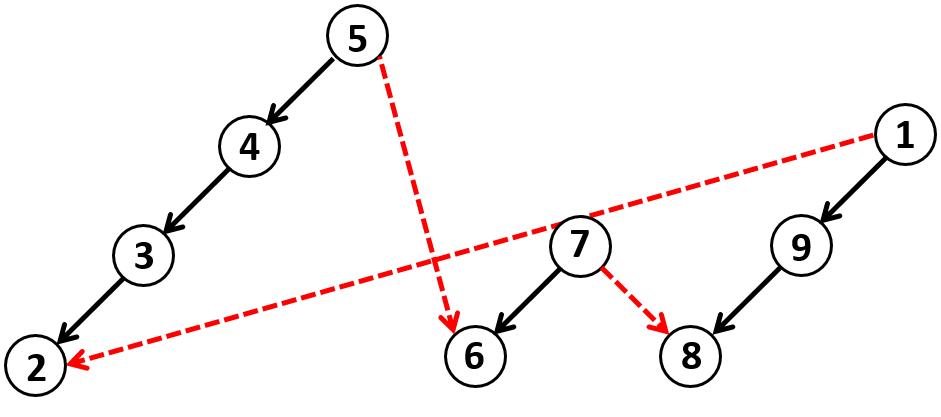}
\end{center}
\end{example}

\begin{proof}
By \hyperref[e-uni]{Corollary \ref*{e-uni}} \begin{equation}\label{form}
c_\lambda = \sum_{\mu: \lambda(\mu) = \lambda} \mu_1 t[\mu_1-1]_t t[\mu_2-1]_t \cdots t[\mu_{l(\lambda)}-1]_t,
\end{equation}
where $\mu = (\mu_1, \mu_2, \cdots, \mu_{l(\lambda)})$ is a composition of $n,$ $l(\lambda)$ is the length of $\lambda,$ and $\lambda(\mu) = \lambda$ means that when the parts of $\mu$ are written in decreasing order, this is the partition $\lambda.$

It follows from this that $c_\lambda = 0$ if any of the parts of $\lambda$ = 1.  We also have $AO_{\lambda}(C_n)$ is empty in that case, which means that the result holds in that case.  We can now assume that $\lambda$ has no parts of size 1.

For $a, b \in \PP$ with $1\leq b <a,$ define a \textit{mountain}, $\overrightarrow{M}_{a,b} = (V, E),$ as a digraph on $a$ vertices $V = \{v_1, v_2, \cdots, v_a\}$ with edge set $E = \{(v_i, v_{i-1}) \mid 1 < i \leq a-b\} \cup \{(v_i, v_{i+1}) \mid a-b \leq i <a\}.$  We will say $v_1$ is the first vertex of the mountain and $v_a$ is the last.  For each $i = 1, 2, \cdots, a-1,$ we say that $v_{i+1}$ is the successor of $v_i$ and $v_i$ is the predecessor of $v_{i+1}.$  Below, we show $\overrightarrow{M}_{5,3}.$

\begin{center}
\includegraphics[scale=0.5]{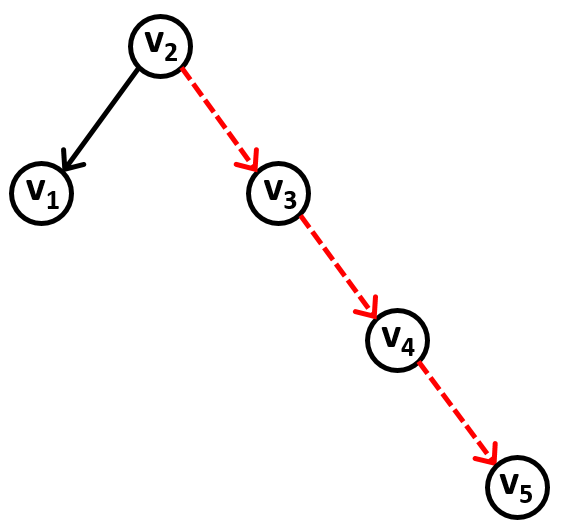}
\end{center}

We can obtain an acyclic orientation of $C_n$ from each term of the inner sum of \hyperref[form]{(\ref*{form})} as follows.  For each $1 \leq i \leq , l(\lambda),$ suppose we choose the $t^{j_i}$ term from the $t[\mu_i -1]_t$ factor.  From this choice of $j_i$'s, we can create a sequence of mountains, $\overrightarrow{M}_{\mu_1+1, j_1}, \overrightarrow{M}_{\mu_2+1, j_2}, \cdots \overrightarrow{M}_{\mu_{l(\lambda)}+1, j_{l(\lambda)}},$ on pairwise disjoint vertex sets.  Then we attach the mountains by identifying the last vertex of $\overrightarrow{M}_{\mu_i+1, j_i}$ with the first vertex of $\overrightarrow{M}_{\mu_{i+1}+1, j_{i+1}}$ for $1 \leq i <l(\lambda)$ and by identifying the last vertex of $\overrightarrow{M}_{\mu_{l(\lambda)}+1, j_{l(\lambda)}}$ with the first vertex of $\overrightarrow{M}_{\mu_1+1, j_1}.$  

We will place the label $\bf 1$ on one of the vertices, $v$, from $\overrightarrow{M}_{\mu_1+1, j_1},$ excluding the last vertex, so the $\mu_1$ factor in \hyperref[form]{(\ref*{form})} is for our $\mu_1$ choices. We label the successor of $v$ with $\bf 2$ and continue labeling successors in order until we reach the predecessor of $v$.

It should be clear that we get a unique acyclic orientation in this manner and that every acyclic orientation can be built with this method.  This proves our proposition.
\end{proof}

For the following proposition, let $\overrightarrow{P_n} = ([n],E)$ denote the directed path, where $E = \{(i, i+1)\mid 1 \leq i <n\},$ and let $P_n$ denote the underlying undirected graph.  For an acyclic orientation of $P_n$, denoted $G_{\bar {a}},$ we say that $i$ and $j$ are \textit{consecutive sinks} of $G_{\bar{a}}$ if $i$ and $j$ are both sinks of $G_{\bar{a}},$ and there are no other sinks in the circular interval $[i, j].$  Notice that this includes the sink with the largest label and the sink with the smallest label. 

\begin{prop}
Let $X_{\overrightarrow{P_n}}({\bf x},t) = \displaystyle \sum_{\lambda \vdash n} c_{\lambda} e_{\lambda}.$  Then $$c_{\lambda} = \displaystyle \sum_{G_{\bar{a}} \in AO_{\lambda}(P_n)} t^{\asc_{\overrightarrow{P_n}}(G_{\bar{a}})},$$ where $AO_{\lambda}(P_n)$ is the set of all acyclic orientations of $P_n$ such that the number of vertices between consecutive sinks is $\lambda_1 - 1,$ $\lambda_2 - 1,$...,$\lambda_k - 1$ in any order and $\asc_{\overrightarrow{P_n}}(G_{\bar{a}})$ is the number of directed edges of $\overrightarrow{P_n}$ that are oriented as in $G_{\bar{a}}.$
\end{prop}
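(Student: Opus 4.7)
The plan is to parallel the proof of the preceding proposition (the directed cycle case), with Example \ref{dir path ex} playing the role that Corollary \ref{e-uni} played there. Expanding the denominator of $\sum_{n \geq 0} X_{\overrightarrow{P_n}}({\bf x},t) z^n = \bigl(\sum_{k \geq 0} e_k({\bf x}) z^k\bigr)/\bigl(1 - t \sum_{k \geq 2}[k-1]_t e_k({\bf x}) z^k\bigr)$ as a geometric series and extracting the coefficient of $z^n$ gives
\[
X_{\overrightarrow{P_n}}({\bf x},t) = \sum_{\mu} e_{\mu_0}({\bf x}) e_{\mu_1}({\bf x}) \cdots e_{\mu_m}({\bf x}) \prod_{i=1}^m t[\mu_i - 1]_t,
\]
where $\mu = (\mu_0, \mu_1, \ldots, \mu_m)$ ranges over compositions of $n$ with $\mu_0 \geq 0$ and $\mu_i \geq 2$ for $i \geq 1$. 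Collecting by partition type and separating the Case~A contribution ($\mu_0 = 0$) from the Case~B contribution ($\mu_0 = v$ for some part $v$ of $\lambda$) yields
\[
c_\lambda(t) = [\lambda\text{ has no part }1] \cdot |\mathrm{Comp}(\lambda)| \prod_{p \in \lambda} t[p-1]_t \;+\; \sum_{v} |\mathrm{Comp}(\lambda \setminus \{v\})| \prod_{p \in \lambda \setminus \{v\}} t[p-1]_t,
\]
where $\mathrm{Comp}(\lambda)$ is the set of orderings of the parts of $\lambda$ and $v$ ranges over distinct values of $\lambda$ for which $\lambda \setminus \{v\}$ has all parts $\geq 2$. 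A consequence is that $c_\lambda(t) = 0$ whenever $\lambda$ has two or more parts equal to~$1$, matching $AO_\lambda(P_n) = \emptyset$ in that case (since the only pair of consecutive sinks on $P_n$ that can be adjacent is the circular wrap-around pair).

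Next I would parameterize each $G_{\bar{a}} \in AO_\lambda(P_n)$ by (i) the part $v = \lambda_w$ of $\lambda$ designated as the wrap-around part; (ii) a split $(l,r)$ with $l + r = v - 1$, giving left- and right-tail sizes so that the sinks are $i_1 = l+1 < \cdots < i_k = n-r$; (iii) an ordered arrangement of the remaining $k-1$ parts of $\lambda$ as the sizes of the interior mountains; and (iv) a peak position $b_j \in \{1, \ldots, \nu_j - 1\}$ for each interior mountain of size $\nu_j$, in the sense of Proposition \ref{Cn acyc}. The edges of $\overrightarrow{P_n}$ matching $G_{\bar{a}}$ are exactly the $l$ left-tail edges (all rightward) together with, for each interior mountain $j$, the $b_j$ edges to the right of the peak, so $\asc_{\overrightarrow{P_n}}(G_{\bar{a}}) = l + \sum_j b_j$. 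Summing over all these choices gives
\[
\sum_{G_{\bar{a}} \in AO_\lambda(P_n)} t^{\asc_{\overrightarrow{P_n}}(G_{\bar{a}})} = \sum_v [v]_t \cdot |\mathrm{Comp}(\lambda \setminus \{v\})| \prod_{p \in \lambda \setminus \{v\}} t[p-1]_t,
\]
with $v$ ranging over the same set as in Case~B.

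Finally I would match the two expressions by writing $[v]_t = 1 + t[v-1]_t$ and splitting the AO sum accordingly. The ``$1$'' portion reproduces Case~B of the GF expansion verbatim, while the ``$t[v-1]_t$'' portion, after recombining $t[v-1]_t \cdot \prod_{p \neq v} t[p-1]_t = \prod_{p \in \lambda} t[p-1]_t$, becomes $\bigl(\sum_v |\mathrm{Comp}(\lambda \setminus \{v\})|\bigr) \prod_{p \in \lambda} t[p-1]_t = |\mathrm{Comp}(\lambda)| \prod_{p \in \lambda} t[p-1]_t$, which is exactly Case~A. Here the final equality uses the elementary bijective identity $\sum_v |\mathrm{Comp}(\lambda \setminus \{v\})| = |\mathrm{Comp}(\lambda)|$ obtained by reading off the first part of a composition. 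The main obstacle is the careful bookkeeping in the AO parameterization, in particular verifying that the left-tail edges account for exactly the $l$ term in $\asc_{\overrightarrow{P_n}}(G_{\bar{a}})$ and that the degenerate case $\lambda_w = 1$ (where both endpoints of $P_n$ are sinks and the wrap-around is trivial, forcing $l = r = 0$) fits into the same formula without requiring a separate argument.
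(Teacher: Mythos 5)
Your proposal is correct and follows essentially the same route as the paper: both start from the Shareshian--Wachs generating function for $X_{\overrightarrow{P_n}}({\bf x},t)$, and both parameterize the acyclic orientations by a distinguished wrap-around part (split into the two end-tails, contributing $[v]_t$) together with an ordered sequence of interior mountains each contributing $t[\nu-1]_t$. The only difference is organizational --- the paper quotes the expansion directly in the form $\sum_{\mu}[\mu_1]_t\, t[\mu_2-1]_t\cdots t[\mu_{l(\lambda)}-1]_t$ and builds a term-by-term bijection to orientations, whereas you re-derive that form from the geometric series and match the two sides via $[v]_t = 1 + t[v-1]_t$ and the identity $\sum_v |\mathrm{Comp}(\lambda\setminus\{v\})| = |\mathrm{Comp}(\lambda)|$, which is extra but harmless algebra.
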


\begin{example}
In the acyclic orientation of $P_8$ shown below, there are 3 vertices between sinks {\bf 2} and {\bf 6}, 1 vertex between sinks {\bf 6} and {\bf 8} and 1 vertex between sinks {\bf 8} and {\bf 2}, so this corresponds to $e_{422}.$  There are 4 edges that match the original orientation of $\overrightarrow{P_8},$ shown by the dotted red arrows, hence this acyclic orientation corresponds to $t^4 e_{422}.$

{\begin{center}
\includegraphics[scale=0.5]{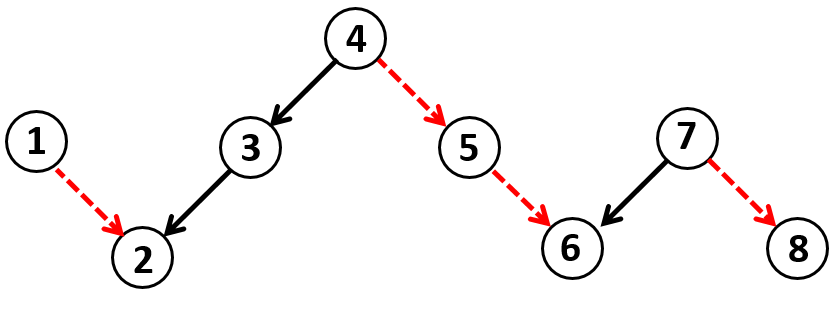}
\end{center}}
\end{example}

\begin{proof}
In \cite[Theorem 7.2]{Eul}, Shareshian and Wachs showed that $$\displaystyle \sum_{n \geq 0} X_{P_n}({\bf x},t)z^n = \frac{\displaystyle \sum_{k \geq 0}e_k z^k}{1-t \displaystyle \sum_{k \geq 2} [k-1]_t e_k z^k}.$$ 
From this we can get that for $n \geq 1$, \begin{equation} \label{Pn exp}
X_{\overrightarrow{P_n}}({\bf x},t) = \sum_{\lambda \vdash n}e_\lambda \sum_{\mu: \lambda(\mu) = \lambda}[\mu_1]_t t[\mu_2-1]_t t[\mu_3 -1]_t \cdots t[\mu_{l(\lambda)}-1]_t.
\end{equation}(See \cite[Table 1]{CQSF}).

We can obtain an acyclic orientation of $P_n$ from each term of the inner sum of \hyperref[Pn exp]{(\ref*{Pn exp})} as follows.  For each $2 \leq i \leq l(\lambda),$ suppose we choose $t^{j_i}$ from the $t[\mu_i -1]_t$ factor.  From this choice of $j_i$'s, we can create a sequence of mountains, $\overrightarrow{M}_{\mu_2+1, j_2}, \cdots \overrightarrow{M}_{\mu_{l(\lambda)}+1, j_{l(\lambda)}},$ with disjoint vertex sets.  Then we attach the mountains by identifying the last vertex of $\overrightarrow{M}_{\mu_i+1, j_i}$ with the first vertex of $\overrightarrow{M}_{\mu_{i+1}+1, j_{i+1}}$ for $2 \leq i <l(\lambda).$  

Now suppose we choose the $t^j$ term from the $[\mu_1]_t$ factor.  Then let $\overrightarrow{Q}_1 = (V, E)$ denote the digraph with vertex set $\{v_1, v_2, \cdots, v_{j+1}\}$ and edge set $E = \{(v_i, v_{i+1}) \mid 1 \leq i \leq j\}.$   We will say $v_1$ is the first vertex of $\overrightarrow{Q}_1$ and $v_{j+1}$ is the last.  For each $i = 1, 2, \cdots, j,$ we say that $v_{i+1}$ is the successor of $v_i$ and $v_i$ is the predecessor of $v_{i+1}.$  Let $\overrightarrow{Q}_2 = (V, E)$ denote the digraph with vertex set $\{v_1, v_2, \cdots, v_{\mu_1-j}\}$ and edge set $E = \{(v_i, v_{i-1}) \mid 1 < i \leq \mu_1-j\}.$  We will say $v_{1}$ is the first vertex of $\overrightarrow{Q}_2$ and $v_{\mu_1 - j}$ is the last.  For each $i = 1, 2, \cdots, \mu_1-j-1,$ we say that $v_{i+1}$ is the successor of $v_i$ and $v_i$ is the predecessor of $v_{i+1}.$

Then identify the last vertex of $\overrightarrow{Q}_1$ with the first vertex of $\overrightarrow{M}_{\mu_2+1, j_2}$ and identify the first vertex of $\overrightarrow{Q}_2$ with the last vertex of $\overrightarrow{M}_{\mu_{l(\lambda)+1}, j_{l(\lambda)}}.$

Label the resulting digraph by placing {\bf 1} on the first vertex, $v,$ of $\overrightarrow{Q}_1.$  Label the successor of $v$ with {\bf 2}, and continue labeling successors in order until all vertices are labeled.

It should be clear that we get a unique acyclic orientation in this manner and that every acyclic orientation can be built with this method.  This proves our proposition.
\end{proof}

\appendix
\section{Graph classes} \label{Graphs}

In this section, we will discuss a few properties of circular indifference digraphs.  Then we will take a look at how these graphs relate to other graphs found in the literature. 

\begin{definition}
Suppose we have a finite collection of arcs positioned around a circle of any radius so that no arc properly contains another.  We consider the starting point of an arc as the counterclockwise-most endpoint of the arc.  We can construct a digraph, which we call a \textit{proper circular arc digraph} by assigning a vertex to each arc and having an edge from arc $A$ to arc $B$ if the starting point of arc $B$ is contained in arc $A$.  The underlying undirected graph is called a \textit{proper circular arc graph}.
\end{definition}

\begin{example} Here we see a collection of proper circular arcs positioned around a circle and the corresponding proper circular arc digraph. 

\includegraphics[scale = 0.5]{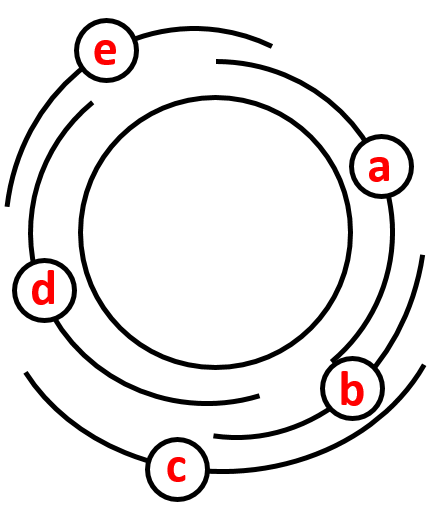}\ \ \ \ \ \ \ \  \ \ 
\includegraphics[scale = 0.5]{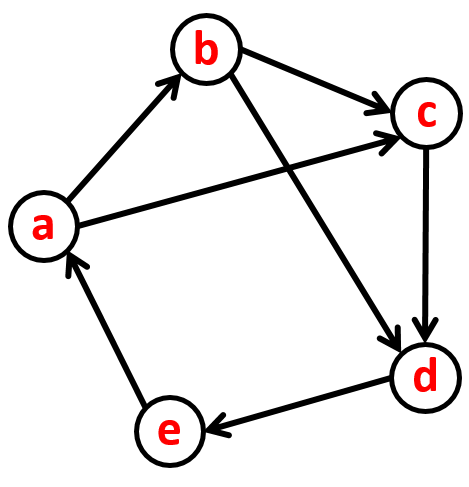}\centering 
\end{example}

\begin{thm} \label{circ digraphs}
Let $\overrightarrow{G}$ be a connected digraph.  Then the following statements are equivalent:
\begin{enumerate}
\item $\overrightarrow{G}$ is a proper circular arc digraph.
\item $\overrightarrow{G}$ is a circular indifference digraph.
\end{enumerate}

\end{thm}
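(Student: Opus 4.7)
The plan is to establish the equivalence by providing explicit constructions in each direction. In both directions the bridge is a function $f:[n]\to[n]$ with the property that the out-neighbors of vertex $i$ are exactly $\{i+1, i+2, \ldots, f(i)\}$ read cyclically (and possibly empty, encoded as $f(i)=i$). The content of the theorem is that the two classes of digraphs are parametrized by exactly the same such functions.

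For $(1)\Rightarrow(2)$, I would first label the arcs so that their starting points $s_1, s_2, \ldots, s_n$ appear in counterclockwise order around the circle. Using the no-proper-containment property, I would show that for each $i$ the set of indices $j$ with $s_j\in A_i$ is a cyclic interval $[i, f(i)]$ beginning at $i$: if $s_j, s_k \in A_i$ with $s_j$ lying strictly between $s_i$ and $s_k$ in cyclic order, then $A_j$ must extend past the endpoint of $A_i$ (otherwise $A_j \subset A_i$), and hence $A_j$ contains $s_k$. Setting $I := \{[i, f(i)] : f(i)\ne i\}$, I would then verify that the circular indifference digraph on $[n]$ determined by $I$ has the same edge set as the original arc digraph. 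The forward inclusion is immediate; for the reverse, if $[i,j]\subseteq[k,f(k)]$ then $s_i, s_j \in A_k$ in cyclic order after $s_k$, and since $A_i$ also extends past the endpoint of $A_k$ (again by no containment), $A_i$ must contain $s_j$.

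For $(2)\Rightarrow(1)$, I would first observe that in a circular indifference digraph each out-neighborhood is cyclically consecutive: if $(i,j)$ is an edge witnessed by $[i,j]\subseteq K\in I$ and $j'$ lies on the cyclic arc from $i+1$ to $j$, then $[i,j']\subseteq[i,j]\subseteq K$, so $(i,j')$ is an edge too. Let $f(i)$ be the cyclically-last out-neighbor of $i$ (or $f(i)=i$ if none exists). Place distinct starting points $s_1, \ldots, s_n$ in counterclockwise order on a circle and define $A_i$ to begin at $s_i$ and end at a point in the open sub-arc $(s_{f(i)}, s_{f(i)+1})$. By construction $s_j \in A_i$ iff $j\in[i,f(i)]$, so the resulting proper circular arc digraph matches $\overrightarrow{G}$. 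To ensure no arc properly contains another, I would choose endpoints as follows: whenever $f(i_1)=f(i_2)=k$ and several arcs end in the same open sub-arc, I order their endpoints so that the arc whose start $s_i$ lies earlier in the cyclic order (beginning just after $s_{k+1}$) also has an earlier endpoint inside $(s_k, s_{k+1})$. Then two arcs with the same $f$-value satisfy ``earlier start and earlier end'' simultaneously, so neither contains both the start and end of the other.

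The main obstacle is this tie-breaking. A naive placement (e.g., ending all arcs with the same $f$-value at a common point) introduces spurious proper containments. After fixing the local rule above, one must also check the case $f(i)\ne f(j)$, where the endpoints automatically land in different open sub-arcs: $A_i\subsetneq A_j$ would then force $s_i\in A_j$ (so $i$ is an out-neighbor of $j$) together with $f(i)$ cyclically before $f(j)$ in the arc from $s_j$, and a direct comparison with the chosen endpoint positions rules this out. This last verification is the only delicate step; everything else is bookkeeping about cyclic orderings.
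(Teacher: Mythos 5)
Your proposal is correct and follows essentially the same route as the paper: both directions normalize the data to a ``farthest out-neighbor'' function (the paper's $c_i$, your $f(i)$), place starting points in cyclic order, and break ties among arcs sharing a right endpoint by letting the later-starting arc extend slightly farther. You are somewhat more explicit than the paper about verifying the reverse edge-set inclusion and the no-proper-containment check for arcs with distinct $f$-values, but the underlying construction is identical.
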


\begin{proof}
First let's show (2)$\implies$(1).  Let $\overrightarrow{G}$ be a circular indifference digraph on $[n]$ that comes from the set of circular intervals $$I = \{[a_1, b_1], [a_2,b_2],\cdots,[a_k,b_k]\}$$ of $[n].$  From these intervals, we can construct a set of intervals $$\tilde{I} = \{[1, c_1], [2,c_2],...,[n,c_n]\}$$ such that for each $i \in [n],$ we have that $[i,c_i]$ is the largest circular interval that is contained in an interval of $I$ and that has $i$ as its left endpoint.   It is easy to see that each interval in $I$ must be contained in an interval of $\tilde{I}$ and vice versa, so $I$ and $\tilde{I}$ are both associated to $\overrightarrow{G}$.

We will construct $n$ proper arcs on a circle so that the corresponding proper circular arc graph is $\overrightarrow{G}$.  Draw a circle and place $n$ points equally spaced around the circle.  Label these points in cyclic order with $[n]$. For each circular interval of $\tilde{I},$ we will place an arc on the circle.  Start with a circular interval $[i, c_i] \in \tilde{I}$ of maximal size and draw an arc from slightly before $i$ to slightly after $c_i.$ Continue this process with all the circular intervals of $\tilde{I}$ in weakly decreasing order of their sizes.  Note that if the next interval has right endpoint the same $c_i$ as a previous interval, the newest arc (coming from a circular interval of smaller size) should extend slightly past the previous arc to avoid having one interval properly contained in another.

The arc we construct from the interval $[i, c_i]$ that starts just before $i$ on the circle is the arc that corresponds to vertex $i$ in $\overrightarrow{G}.$  Since arc $i$ will contain the starting points of all the arcs corresponding to the vertices in $[i, c_i],$ we have that $(i,j)$ is an edge of the proper circular arc digraph for each $i<j\leq c_i.$  From here, we can see that the proper circular arc digraph associated to this set of arcs is isomorphic to the circular indifference digraph, $\overrightarrow{G},$ we started with.

Here is an example of this process.  Suppose we have the circular intervals $$I = \{[1,3], [3,4], [4,5], [5,1]\}$$ on $[n].$  Then $$\tilde{I} = \{[1,3], [2,3], [3,4], [4,5], [5,1]\}.$$  The arcs that we would draw are shown in the figure below.  We can see that both $I$ and the arcs on this circle are associated with the digraph given below.

\begin{center} \includegraphics[scale=0.3]{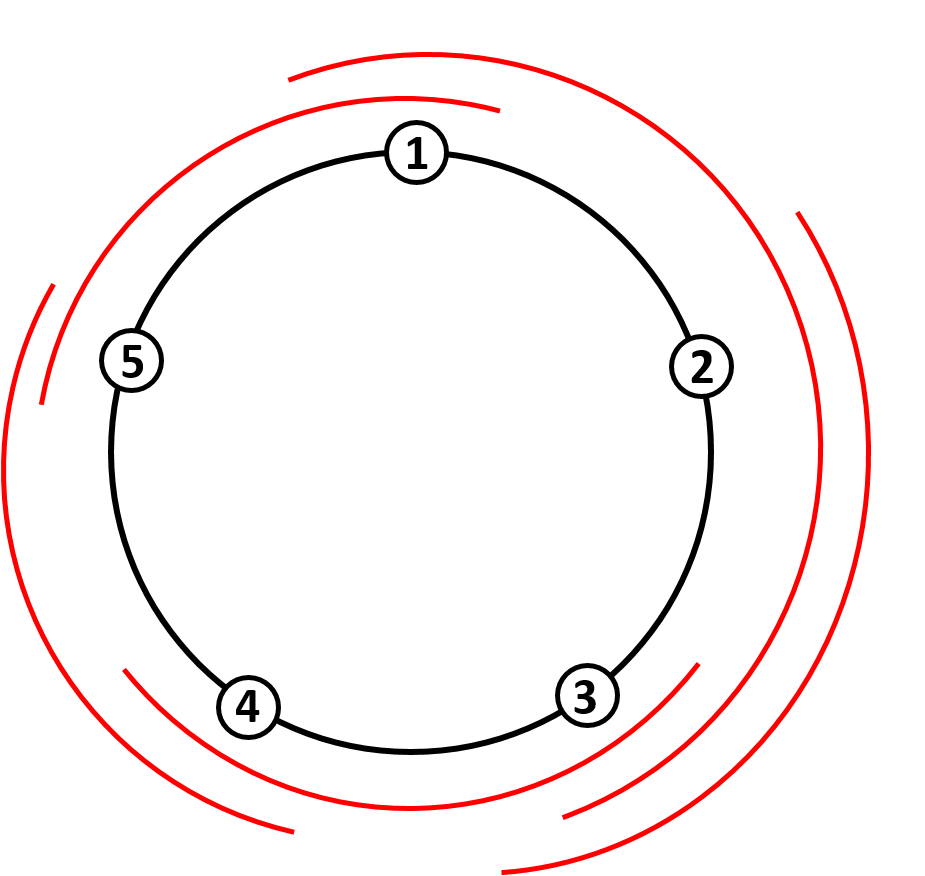} \hspace{0.5 in}
\includegraphics[scale=0.4]{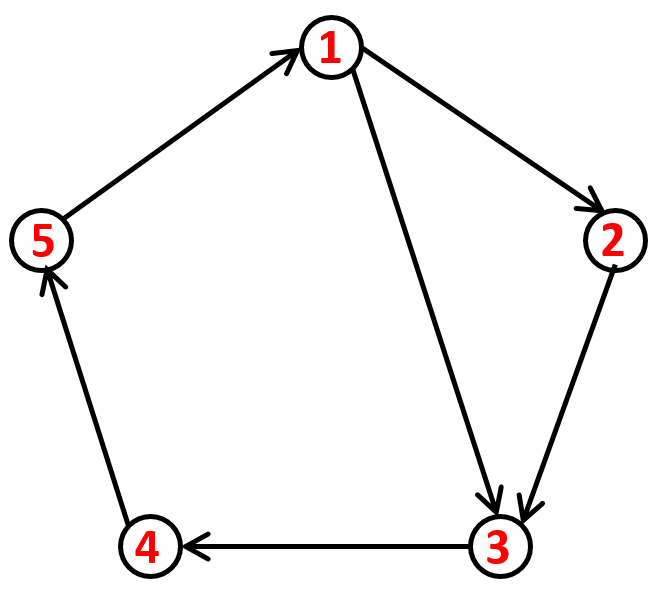}
\end{center}

Now let's show (1)$\implies$(2).

Let $\overrightarrow{G}$ be a proper circular arc digraph on $n$ vertices that comes from some proper arc representation on a circle.  Label one of the arcs $1$.  Now find the first arc that begins clockwise after arc $1$.  Label this arc $2$.  Then find the next arc that begins clockwise after arc $2$.  Label this arc $3$.  Continue this until all $n$ arcs are labeled with the labels $[n].$  Now create a set of circular intervals of $[n],$ called $I,$ as follows.  

Let $I$ be the set of all $[i,j]$ such that arc $i$ contains the starting point of arc $j.$  If arc $i$ contains the starting point of arc $j,$ then it must also contain the starting point of all arcs in $[i,j],$ hence we can see that the proper circular arc digraph associated with the set of arcs on the circle is isomorphic to the circular indifference digraph on $[n]$ associated with $I.$
\end{proof}

Another class of graphs we want to look at is the class of simple digraphs that do not have any induced subgraphs isomorphic to $\overrightarrow{K_{12}}$ and $\overrightarrow{K_{21}}$ as defined in \hyperref[Symmetry]{Section \ref*{Symmetry}}.  For notational convenience, we will call these $\{\overrightarrow{K_{12}}, \overrightarrow{K_{21}}\}$-free digraphs.
%

\begin{thm} \label{cyclic graphs}
Let $G$ be a simple connected graph. Then the following statements are equivalent:
\begin{enumerate} \label{circ graphs}
\item $G$ is isomorphic to a proper circular arc graph.
\item $G$ is isomorphic to a circular indifference graph.
\item $G$ admits an orientation that makes it a $\{\overrightarrow{K_{12}}, \overrightarrow{K_{21}}\}$-free digraph.
\end{enumerate}
\end{thm}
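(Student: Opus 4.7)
My approach is to first dispense with the two easy implications by invoking the digraph version already established, and then treat $(3) \Rightarrow (2)$ as the main content.

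The equivalence $(1) \Leftrightarrow (2)$ I would derive directly from Theorem~\ref{circ digraphs}. A connected undirected graph $G$ is a proper circular arc graph if and only if it is the underlying undirected graph of some connected proper circular arc digraph; similarly, $G$ is a circular indifference graph if and only if it is the underlying undirected graph of some connected circular indifference digraph. Since Theorem~\ref{circ digraphs} says these two classes of digraphs coincide, the class of their underlying undirected graphs must coincide as well.

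For $(2) \Rightarrow (3)$, I would fix a labeling of $V(G)$ by $[n]$ and a collection $I$ of circular intervals realizing $G$ as a circular indifference graph, and then consider the associated circular indifference digraph $\overrightarrow{G}$ (orienting $(i,j)$ whenever the circular interval $[i,j]$ lies in some interval of $I$). This is an orientation of $G$, and Lemma~\ref{induced} shows it contains no induced $\overrightarrow{K_{12}}$ or $\overrightarrow{K_{21}}$, so it witnesses $(3)$.

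The main work is in $(3) \Rightarrow (2)$. Given an orientation $\overrightarrow{G}$ of $G$ with no induced $\overrightarrow{K_{12}}$ or $\overrightarrow{K_{21}}$, I would build an explicit cyclic labeling of $V(G)$ by $[n]$ along with a collection of circular intervals $I$ on $[n]$ producing $\overrightarrow{G}$ as a circular indifference digraph. The crucial observation is that the forbidden configurations force strong local structure: if $v$ has two out-neighbors $a,c$ that are not joined in $G$, then the only way to avoid an induced $\overrightarrow{K_{12}}$ on $\{v,a,c\}$ is for one of the reverse edges $(a,v)$ or $(c,v)$ to be present. Dually for in-neighbors and $\overrightarrow{K_{21}}$. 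I would use this to equip the out-neighborhood $N^+(v)$ (and in-neighborhood $N^-(v)$) with a canonical linear order, then pick any starting vertex $v_1$, declare it to be $1$, and recursively label out-neighbors in the order just constructed, wrapping around cyclically. To finish, I would verify that the collection of circular intervals $I = \{[i,j] : (i,j) \in E(\overrightarrow{G})\}$ realizes $\overrightarrow{G}$ as a circular indifference digraph under this labeling, and hence $G$ as a circular indifference graph.

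The main obstacle I expect is consistency of the construction: the local orders coming from different vertices' out- and in-neighborhoods must patch into a single globally well-defined cyclic order on $[n]$, and one must rule out both a premature "collision" of the labeling wrapping around and a failure to exhaust all vertices. This is where the $\overrightarrow{K_{21}}$-free hypothesis should do real work, constraining how the local orders at neighboring vertices relate; connectedness of $G$ is also needed so that the inductive labeling propagates to every vertex.
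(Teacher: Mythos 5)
Your handling of $(1)\Leftrightarrow(2)$ matches the paper exactly (both reduce to Theorem~\ref{circ digraphs}), and your direct argument for $(2)\Rightarrow(3)$ via Lemma~\ref{induced} is correct and clean. The problem is $(3)\Rightarrow(2)$, which is where essentially all the content of the theorem lives: the paper does not prove this direction at all but cites Skrien for the equivalence of $(1)$ and $(3)$, and your proposal amounts to an attempt to reprove Skrien's theorem from scratch. What you offer is a plan, not a proof: you yourself identify the decisive difficulty (patching the local orders on the various out- and in-neighborhoods into a single globally consistent cyclic labeling, without premature collisions and without missing vertices) and you do not resolve it. That consistency argument is precisely the nontrivial part of Skrien's result, so as written the implication $(3)\Rightarrow(2)$ remains unproven.

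There is also a more pointed issue with your ``crucial observation.'' If $v$ has out-neighbors $a,c$ that are non-adjacent in $G$, you claim the only way to avoid an induced $\overrightarrow{K_{12}}$ is for a reverse edge $(a,v)$ or $(c,v)$ to be present. But an \emph{orientation} of $G$ in statement $(3)$ must assign a single direction to each edge: if double edges were permitted, then orienting every edge of an arbitrary graph in both directions would produce a digraph with no induced $\overrightarrow{K_{12}}$ or $\overrightarrow{K_{21}}$ (those configurations contain only one-way edges), so $(3)$ would hold for every connected graph and the theorem would be false. Under a genuine one-way orientation the reverse edges you invoke cannot exist, and the correct consequence of $\{\overrightarrow{K_{12}}, \overrightarrow{K_{21}}\}$-freeness is that every out-neighborhood and every in-neighborhood is a clique. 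Your local-order construction therefore starts from the wrong structural statement, and the subsequent labeling scheme would need to be rebuilt on that corrected foundation --- at which point you are carrying out the proof in Skrien's paper, which is the route the paper takes by citation.
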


\begin{proof}
The equivalence of (1) and (3) was shown in \cite{Skrien}, and the equivalence of (1) and (2) follows from \hyperref[circ digraphs]{Theorem \ref*{circ digraphs}}.
\end{proof}
Now let us look at the non-circular version of these graphs.

\begin{definition}
Suppose that we have a collection of intervals, $I,$ of the ordered set $[n].$  Then we can construct a graph, $G = ([n], E),$ with edge set $E = \{\{i,j\} \mid i \neq j \mbox{ and } i, j \mbox{ contained in the same interval of }I \}$.  This is called an \textit{indifference graph}.
\end{definition}

\begin{definition}
Suppose we have a finite collection of intervals on the real line. We can associate a graph to this interval representation by letting each interval correspond to a vertex and allowing two distinct vertices to be adjacent if their corresponding intervals overlap.  This is called an \textit{interval graph}.  If no interval properly contains another, this is called a \textit{proper interval graph}.  If each interval has length 1, this is called a \textit{unit interval graph}.
\end{definition}

The following theorem is the acyclic or non-circular version of \hyperref[circ graphs]{Theorem \ref*{circ graphs}}.

\begin{thm}
Let $G$ be a simple graph. Then the following statements are equivalent:
\begin{enumerate}
\item $G$ is isomorphic to a proper interval graph.
\item $G$ is isomorphic to a unit interval graph.
\item $G$ is isomorphic to an indifference graph.
\item $G$ admits an acyclic orientation that makes it a $\{\overrightarrow{K_{12}}, \overrightarrow{K_{21}}\}$-free digraph.
\end{enumerate}
\end{thm}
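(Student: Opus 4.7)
The plan is to establish the four-way equivalence as a linear analog of Theorem \ref{cyclic graphs}, using classical results on proper interval graphs for the equivalence of (1), (2), (3) and an argument parallel to that of Theorem \ref{cyclic graphs} for the equivalence with (4).

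The equivalences (1) $\iff$ (2) $\iff$ (3) are the classical linear versions of the corresponding proper-circular-arc/circular-indifference equivalences and are due ultimately to Roberts. In particular, (2) $\iff$ (3) is explicit: given a unit interval representation with left endpoints $x_1 \leq x_2 \leq \cdots \leq x_n$, relabel the vertex corresponding to the $i$-th interval as $i$ and take $I$ to be the collection of maximal cliques of $G$; this yields the indifference structure, and the converse construction is similarly direct.

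For (3) $\implies$ (4), I would orient each edge $\{i, j\}$ from $i$ to $j$ whenever $i < j$, obtaining an acyclic orientation. To check it is $\{\overrightarrow{K_{12}}, \overrightarrow{K_{21}}\}$-free, suppose $\{a, b, c\}$ induces $\overrightarrow{K_{12}}$ with edges $(b, a), (b, c)$ and no edge $\{a, c\}$. Then $b < a$ and $b < c$, and we may assume $a < c$. The edge $\{b, c\}$ places $b$ and $c$ in some common interval $J \in I$; since $J$ is an interval of $[n]$ and $b < a < c$, also $a \in J$, forcing $\{a, c\}$ to be an edge, a contradiction. The $\overrightarrow{K_{21}}$ case is symmetric.

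The main obstacle is (4) $\implies$ (1), whose proof mirrors the strategy of Theorem \ref{cyclic graphs}. One establishes first that the underlying graph $G$ is claw-free (any orientation of $K_{1,3}$ forces the center to have at least two out-neighbors or at least two in-neighbors among the three pairwise non-adjacent leaves, producing an induced $\overrightarrow{K_{12}}$ or $\overrightarrow{K_{21}}$) and second that $G$ is chordal (an induced cycle $C_k$ with $k \geq 4$ inherits an acyclic orientation whose source has two non-adjacent cycle-neighbors as out-neighbors, yielding an induced $\overrightarrow{K_{12}}$). The remaining forbidden configurations (such as the net, which is chordal and claw-free but not a proper interval graph) must also be excluded; one verifies directly that such configurations cannot admit an acyclic $\{\overrightarrow{K_{12}}, \overrightarrow{K_{21}}\}$-free orientation, after which the classical characterization of proper interval graphs applies. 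Alternatively, as in the proof of Theorem \ref{cyclic graphs}, the equivalence of (1) and (4) can be cited directly from \cite{Skrien}.
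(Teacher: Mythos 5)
Your proposal is correct and, for the substantive equivalences, lands on the same ground as the paper: the paper's proof is purely citation-based (Roberts for $(1)\Leftrightarrow(2)$, Skrien for $(1)\Leftrightarrow(4)$, and "analogous to the circular case" for $(1)\Leftrightarrow(3)$), and your fallback to Skrien for $(1)\Leftrightarrow(4)$ matches this exactly, while your explicit arguments for $(2)\Leftrightarrow(3)$ and $(3)\Rightarrow(4)$ are sound additions. Your direct sketch of $(4)\Rightarrow(1)$ would need the full forbidden-subgraph list (the tent/$3$-sun as well as the claw, net, and holes) to stand alone, but since you offer the citation as the alternative, the proof is complete as written.
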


The equivalence of $(1)$ and $(2)$ was shown by Roberts in \cite{Roberts}.  The equivalence of $(1)$ and $(4)$ was shown by Skrien in \cite{Skrien}. The equivalence of $(1)$ and $(3)$ is well-known, but can be shown by an analogous argument to the one given in the proof of Theorem 3.5.  

Note that if we turn a natural unit interval graph of Shareshian and Wachs into a digraph by orienting edges from smaller labels to larger labels, then we get an acyclic $\{\overrightarrow{K_{12}}, \overrightarrow{K_{21}}\}$-free digraph, and in fact, every acyclic $\{\overrightarrow{K_{12}}, \overrightarrow{K_{21}}\}$-free digraph comes from a natural unit interval graph.

\section*{Acknowledgements}{I would like to thank Richard Stanley for his helpful suggestion that made this work possible.  I would also like to thank my advisor, Michelle Wachs, for all of her guidance and encouragement.}

\bibliography{CQFDG}
\bibliographystyle{alpha}

%
%
%
%
%
%
%
%

\end{document}